\documentclass{USG}

\usepackage[mathscr]{eucal}
\usepackage{mathtools} 

\newcommand{\cO}{\mathcal{O}}

\newcommand{\powres}[3]{\left(\! \frac{\textstyle #1\strut}{\textstyle #2\strut} \!\right)_{\mkern-5mu#3}}
\newcommand{\Q}{\mathbb{Q}}

\newcommand{\ray}[1]{K^{\text{\upshape ray}}(#1)}
\newcommand{\reduce}[1]{\overset{\lower2pt\hbox{$\scriptstyle#1$}}{\Rightarrow}}
\newcommand{\roots}{\boldsymbol\mu}
\newcommand{\row}{\mathscr{R}}
\newcommand{\Z}{\mathbb{Z}}

\newcommand{\mellproot}[1]{\mkern-4mu \root {\raise3pt\hbox{$\scriptscriptstyle m_{\ell}^+$}\mkern-4mu} \of {#1}}
\newcommand{\mellroot}[1]{\mkern-4mu \root {\raise2pt\hbox{$\scriptscriptstyle m_{\ell}$}\mkern-2mu} \of {#1}}

\DeclareMathOperator{\Char}{char}
\DeclareMathOperator{\Gal}{Gal}

\DeclareMathOperator{\SL}{SL}

\newcommand\bigset[2]{\left\{\, #1 
 \mathrel{\left| \vphantom {\left\{ #1 \mid #2 \right\} }
 \right.} #2 \,\right\} }

\renewcommand{\theenumi}{\arabic{enumi}}

\theoremstyle{WBstyleone}
\newtheorem{thm}{Theorem}[section]
\newtheorem{lemma}[thm]{Lemma}
\newtheorem{prop}[thm]{Proposition}
\newtheorem{cor}[thm]{Corollary}

\theoremstyle{WBstylethree}
\newtheorem{definition}[thm]{Definition}
\newtheorem{notation}[thm]{Notation}
\newtheorem*{ack}{Acknowledgments}

\newcounter{stepholder}
\numberwithin{step}{stepholder} 
\renewcommand{\thestep}{\arabic{step}}
\renewenvironment{step}{\refstepcounter{step}\par \medbreak \noindent \textbf{Step \thestep.}\unskip\ \it\ignorespaces}{}

\renewcommand{\thesubsection}{\thesection\Alph{subsection}}

\usepackage{wiley_fix} 

\begin{document}

\title{Bounded elementary generation of \texorpdfstring{$\SL_2$}{SL2}: 
\texorpdfstring{\\}{} 
nearly the end}

\author[1]{B. \`E. Kunyavski\u{\i}}%
\address[1]{%
\orgdiv{Department of Mathematics, }%
\orgname{Bar-Ilan University, }%
\orgaddress{\state{5290002 Ramat Gan, }\country{Israel}}}

\author[2]{D. W. Morris}
\address[2]{%
\orgdiv{Department of Mathematics and Computer Science, }%
\orgname{University of Lethbridge, }%
\orgaddress{\state{Lethbridge, AB, T1K~3M4, }\country{Canada}}}

\author[3]{A. S. Rapinchuk}
\address[3]{%
\orgdiv{Department of Mathematics, }%
\orgname{University of Virginia, }%
\orgaddress{\state{Charlottesville, VA 22904-4137, }\country{USA}}}

\corres{%
B.~\`E.~Kunyavski\u{\i} (\email{kunyav@macs.biu.ac.il})
~|~ D.~W.~Morris  (\email{dave.morris@uleth.ca}) 
~|~ A.~S.~Rapinchuk (\email{asr3x@virginia.edu})%
}

\authormark{Kunyavski\u{I} \textsc{et al.}}
\titlemark{Bounded elementary generation of $\SL_2$}

\dedicated{Dedicated to Alexander Lubotzky on the occasion of his 70th birthday.}

\abstract[ABSTRACT]{Let $\cO_S$ be the ring of $S$-integers of a global field $K$ of any characteristic, where $S$ is a finite set of valuations of $K$ (and $S$ contains all of the archimedean valuations if the characteristic is zero). We prove that if $|S| \ge 2$, then every unimodular $(2\times 2)$-matrix over $\cO_S$ is a product of $\leq 7$ elementary matrices. This nearly optimal bound essentially concludes the investigation
of bounded elementary generation of $\SL_2(\cO_S)$ started over 50 years ago.}

\maketitle

\section{Introduction and statement of the results}\label{S:Intro}

Let $K$ be a global field of arbitrary characteristic, i.e., either an algebraic number field or the function field of a smooth geometrically integral projective curve over a finite field. We let $V^K$ denote the set of all nontrivial valuations of $K$, and for a finite subset $S \subset V^K$ (that contains the set $V^K_{\infty}$ of archimedean valuations if $K$ is a number field), consider the ring of $S$-integers
\begin{equation}\label{E:Sint} 
\cO_S = \{\, a \in K \mid \text{$v(a) \geq 0$ for all $v \in V^K \setminus S$} \,\}.
\end{equation}
The goal of this paper is to prove the following result that essentially completes a rather long line of research concerning bounded generation of the group $\SL_2(\cO_S)$ by elementary matrices.

\begin{thm}\label{T:1}
Assume that $\vert S \vert \geq 2$. Then every matrix in $\SL_2(\cO_S)$ is a product of $\leq 7$ elementary matrices.
\end{thm}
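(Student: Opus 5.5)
The plan is to reduce the theorem to a statement about first rows, and then prove that statement by class field theory. Call a unimodular pair $(a,b)\in\cO_S^2$ \emph{good} if, after a bounded number of elementary operations, it can be brought to a pair $(a',b')$ in which $a'$ is a unit, or $a' \equiv 1$ modulo $b'$. Such a pair is then cleared with two more operations. Concretely, for $A=\begin{pmatrix} a & b\\ c& d\end{pmatrix}\in \SL_2(\cO_S)$ the counting goes as follows. If two right multiplications by elementary matrices turn the first row into $(a',b')$ with $a'\equiv 1 \pmod{b'}$, then a third operation gives first row $(1,b')$. One more operation then gives $(1,0)$. What remains is a lower unitriangular matrix, which is one more elementary matrix. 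That is $2+1+1+1=5$ operations, plus slack. The budget of $7$ therefore means the first row must be sent, using at most $4$ or $5$ elementary moves, to a row whose first entry is $1$ modulo the second, or directly to $(1,\ast)$.

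The core step is the following. Given a unimodular $(a,b)$ with $b\neq 0$, we want $t\in\cO_S$ such that $a'=a+tb$ is a prime element, generating a prime ideal $\mathfrak p$ outside $S$, and such that the residue of $b$ modulo $\mathfrak p$ generates a prescribed subgroup. More precisely, we want $b$ to be congruent modulo $\mathfrak p$ to a suitable power of a unit $u\in\cO_S^\times$ of infinite order. Units of infinite order exist because $|S|\ge 2$. We would obtain $\mathfrak p$ from a Chebotarev density argument. Work in the ray class field of conductor $b\cO_S$ composed with the Kummer-type extensions $K(\roots_m,\sqrt[m]{u})$. Here $m$ is chosen so that the order of $u$ modulo $\mathfrak p$ has bounded index in $(\cO_S/\mathfrak p)^\times$. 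Primes in the class of $a$ modulo $b$ that split appropriately in these fields give $a'$. This is a version of the Artin primitive-root problem, and it is unconditional on average because we only need infinitely many $\mathfrak p$ in a Chebotarev set.

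Once $b\equiv u^k \pmod{a'}$, we swap roles. We change $b$ by a multiple of $a'$ to get $b'=u^k+sa'$. Then we use the identity that lets a power of a unit be moved. Since $u^k\equiv b'$ modulo $a'$, we can multiply $a'$ by a multiple of $b'$ to reach a first entry congruent to $1$ modulo $b'$. This step uses $(u^k-1)$-type factorizations together with the fact that $u$ is a unit, the standard Carter–Keller–Paige / Vsemirnov trick.

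The main obstacle is the index issue in the Chebotarev step. We cannot guarantee that $u$ generates $(\cO_S/\mathfrak p)^\times$. We can only guarantee that the index is bounded by some $m_\ell$-type quantity coming from roots of unity and the Kummer theory of $u$. Handling the $\ell$-parts where $\mu_\ell\subset K$, or where $\sqrt[\ell]{u}$ degenerates, requires choosing $\mathfrak p$ so that $b$ is an $m$-th power residue exactly in the right way. Power reciprocity (the $\powres{\cdot}{\cdot}{m}$ symbols) then transfers the condition back to $a$ modulo $b$. I expect this compatibility of congruence conditions at $b$ with power-residue conditions at $\mathfrak p$, kept within the move budget, to be the delicate part. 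Positive characteristic also needs separate care when $\ell=\Char K$.
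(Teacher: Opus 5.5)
Your core step is where the argument breaks in characteristic zero. You want a prime $a'=a+tb$ with $b \bmod a'$ in the image of $\langle u\rangle$. You propose to get it by Chebotarev in the compositum of the ray class field of conductor $b\cO_S$ with $K(\roots_m,\sqrt[m]{u})$, with $m$ chosen so that $\langle u \bmod \mathfrak p\rangle$ has bounded index in $(\cO_S/\mathfrak p)^\times$. No finite extension can enforce that. A Frobenius condition in $K(\roots_m,\sqrt[m]{u})$ controls the $\ell$-part of the index only for the finitely many primes $\ell\mid m$. Bounding the index, which your plan needs in order to put $b$ into $\langle u \bmod \mathfrak p\rangle$, requires that $\mathfrak p$ not split completely in $K(\zeta_\ell,\sqrt[\ell]{u})$ for \emph{every} prime $\ell\nmid m$. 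That is infinitely many conditions at once, so the set you need is not a Chebotarev set. This is Artin's primitive root problem for the fixed unit $u$ in an arithmetic progression, i.e.\ the statement of Theorem~\ref{T:CW-RH}, which for number fields is known only under GRH. Results valid on average over the base say nothing about one prescribed unit. So your outline is the Cooke--Weinberger argument, and it is conditional. In positive characteristic the needed input is unconditional (Bilharz, Lenstra; Proposition~\ref{P:A11}), and your plan is then close to Lemma~\ref{L:A71}. Even there you only get index dividing $\mu$, so you still need a preliminary move making the second entry a prime with prescribed local behaviour at $S$; reciprocity then makes it a $\mu$-th power modulo $\mathfrak p$. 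This is where $3+4=7$ comes from.

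The idea you are missing for number fields is that one never needs the full index. Only its $r$-parts for the finitely many primes $r$ dividing $\phi(a\cO_S)$ matter, and those \emph{are} Chebotarev conditions: a root of $u$ of degree $\mu_r^+$ not fixed by the Frobenius of $\mathfrak p$ in the splitting field of $x^{\mu'}-u$ (Lemma~\ref{MRS3.9}). The remaining part is killed by an exponent. The second entry becomes $q^m$ with $m\equiv 1\pmod{\phi(a\cO_S)}$, so that still $q^m\equiv b\pmod{a}$, and $m\equiv 0\pmod{\phi(\mathfrak p^{\ell})/\phi_0}$. Here $\ell=\ell_{\!\mathfrak p}(u)$ and $\phi_0$ is the part of $\phi(\mathfrak p)$ supported on primes dividing $\phi(a\cO_S)$. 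To make this usable, the first entry is moved to a power $\pi^t$ of a generator of a $\Q$-split prime. That is why $\pi$ must generate $(\cO_K/\mathfrak q^2)^\times$ (hence every $(\cO_K/\mathfrak q^n)^\times$, Lemma~\ref{MRS3.1}), and why the level argument of Lemma~\ref{L:A101} is needed. The $\mu$-th power condition on $q$ modulo $\mathfrak p$ comes from prescribing $\pi$ and $q$ modulo $\mathfrak n$, so that the wild symbols at $V(\mu)$ cancel the symbol at $\mathfrak q$. With your arbitrary second entry $b$ there is no guarantee of any local freedom at those places. The chain $(a_0,b_0)\reduce{1}(a,b)\reduce{1}(a,q^m)\reduce{1}(\pi^t,q^m)\reduce{1}(\pi^t,u^n)\reduce{2}(1,0)$ then gives $7$. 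None of this is in your proposal, so it does not prove the theorem for number fields.
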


We note that the assumption $\vert S \vert \geq 2$ is equivalent to the fact that the group of units $\cO_S^{\times}$ is infinite, and that in the case where $\vert S \vert = 1$, the group $\SL_2(\cO_S)$ is never boundedly generated by elementaries (see \S \ref{S:rank1}).

We will now give a brief, although quite complete, account of the previous work. The investigation into bounded generation of $\SL_2$ by elementaries over rings of $S$-integers in global fields with infinitely many units was opened around 1975 almost simultaneously by Cooke and Weinberger \cite{CW} in characteristic zero and by Queen \cite{Q1,Q2} in positive characteristic. In both situations, the analysis heavily relied on results related to Artin's primitive root conjecture. The difference, however, is that while in characteristic zero Artin's original conjecture has been established in full only assuming the Generalized Riemann Hypothesis (GRH) for certain fields (see Hooley \cite{Hoo}), its analog in positive characteristic is known to hold unconditionally (more precisely, Bilharz \cite{Bil} proved it assuming the Riemann hypothesis for global function fields which was later proved by A.~Weil). The same distinction remains in place for natural variants of Artin's conjecture, and we refer the reader to Lenstra \cite{Len} for a survey of these developments. Using suitable refinements of the results on Artin's conjecture, Cooke and Weinberger \cite{CW} proved that if $\Char K = 0$ and $\vert S \vert \geq 2$, then \emph{assuming a certain form of the (GRH),} one has that every matrix in $\SL_2(\cO_S)$ is a product of $\leq 9$ elementary matrices, while Queen (cf.\ \cite[Theorem 2]{Q2}) showed \emph{unconditionally} that if $\Char K > 0$ and $S$ of size $\geq 2$ contains a point of degree 1 then every element of $\SL_2(\cO_S)$ is a product of $\leq 5$ elementaries.
(See \cite[Remark 8.2]{KPV} for minor corrections to the proof of \cite[Theorem 2]{Q2}.)

The nature of the argument provided by Cooke and Weinberger had led to the question of whether one can establish bounded elementary generation of $\SL_2(\cO_S)$ in characteristic zero \emph{unconditionally.} Liehl \cite{Li} proved this for some special fields $K$. Later, using analytic techniques, Loukanidis and Murty were able to establish the desired result over an arbitrary number field $K$ provided that $S$ is sufficiently large, viz.\ $\vert S \vert \geq \max(5 , 2[K : \Q] - 3)$, cf.\ \cite{LoMu,Mu}.  The first unconditional proof in full generality was given by D.~Carter, G.~Keller and E.~Paige in an unpublished preprint that omitted some details; their argument was completed and made available to the public by D.\,W.~Morris \cite{Mor}. This argument relied on model theory in conjunction with some difficult results of additive number theory dealing with the refinements of the Goldbach problem, and provided no explicit bound on the number of elementaries required. On the other hand, Vsemirnov \cite{Vs} used the results of Heath-Brown \cite{HB} on Artin's primitive root conjecture to implement the approach of Cooke and Weinberger for the ring $\cO_S = \Z[1/p]$ and prove bounded elementary  generation of $\SL_2 \bigl( \Z[1/p] \bigr)$ with a bound of $\leq 5$ elementaries. Finally, Morgan, Rapinchuk and Sury \cite{MRS} proved elementary bounded generation of $\SL_2(\cO_S)$ whenever $\vert S \vert \geq 2$ over an arbitrary number field $K$ with a bound of $\leq 9$ elementaries using only standard results of algebraic number theory such as Artin reciprocity and Chebotarev's Density Theorem (thus, the \emph{same} bound as in \cite{CW} was achieved \emph{without} assuming the (GRH)). Subsequently, the bound was lowered to $\leq 8$ elementaries if $K$ has a real embedding and to $\leq 7$ elementaries assuming a suitable form of the (GRH) by Jordan and Zaytman \cite{JZ} who also showed that the latter bound -- under (GRH) -- can be improved further to $\leq 6$ elementaries if $S$ contains a finite place and even to $\leq 5$ elementaries if $K$ has a real embedding. So, in the general case the bound of $\leq 7$ provided by Theorem~\ref{T:1} \emph{unconditionally} is the same as the bound known earlier assuming the (GRH). Regarding the optimality of this bound, we recall that the example given in \cite{Vs} and corrected in \cite[\S 5]{MRS} demonstrates that at least 5 elementaries are needed to express every element of $\SL_2 \bigl( \Z[1/p] \bigr)$.
(See Subsection~\ref{S:need5} for another proof of this fact.)
Thus, the only remaining question is whether the optimal bound in the general case (without assuming the (GRH)) is $5$, $6$ or $7$.

In contrast to the case of characteristic zero, after the work of Queen \cite{Q2}, bounded elementary generation of $\SL_2(\cO_S)$ in positive characteristic has not been investigated further although this property has been recently established for $\SL_n$ $(n \geq 3)$ over the ring of regular functions on any smooth affine curve over a finite field by Trost \cite{Trost} (the case of polynomial rings in one variable was handled earlier by Nica \cite{Nica}) and for arbitrary Chevalley groups of rank $> 1$ over the rings of polynomials and Laurent polynomials in one variable by Kunyavski\u{\i}, Plotkin and Vavilov \cite{KPV}. 
So, here Theorem~\ref{T:1} again provides a nearly final result in the general case. As in the characteristic zero case, the optimal bound is at least 5 in light of the results of \cite{VSS}, see Remark~7.2 in \cite{KPV}.

We also remark that more recent work of Kunyavski\u{\i}, Plotkin and Vavilov both with and without Lavrenov \cite{KLPV,KPV2} quoted an earlier version of Theorem~\ref{T:1} in which the bound on the number of elementary matrices was~8, rather than~7. Therefore, the statements of \cite[Theorem~D]{KLPV} and \cite[Theorem~B]{KPV2} can now be improved by replacing the factor 8 with~7.

The proof of Theorem~\ref{T:1} in characteristic zero was obtained by A.~Morgan, with some assistance from D.\,W.\,Morris. It is a refinement of the argument given in \cite{MRS}, hence relies only on standard results of algebraic number theory and avoids any reference to Artin's conjecture (and of course also to the (GRH)). Normally, A.~Morgan would have been a co-author of the current paper, however our attempts to contact him and get his approval of the text of the paper turned out to be unsuccessful. Under the circumstances, we felt that putting his name on the paper without his consent would be inappropriate, hence finally decided to leave it out. At the same time, we would like to emphasize that he deserves full credit for the result. The proof of Theorem~\ref{T:1} in positive characteristic was obtained by B.~Kunyavski\u{\i} and A.~Rapinchuk. Their argument, just like Queen's, uses results related to Artin's conjecture, however  more general versions of these results discovered by Lenstra \cite{Len}, combined with (a variation of) some results of \cite{CK}, \cite{MRS} and \cite{Trost} are required.

Here is an outline of the paper.  
Section~\ref{S:prelim} sets up notation and recalls basic facts about power residue symbols.
In Section~\ref{S:Opt} we discuss two aspects of the optimality of Theorem \ref{T:1}. First, we show that the assumption $| S | \geq 2$ can never be omitted (this fact was known in characteristic zero but to our knowledge was not mentioned explicitly in positive characteristic). We then discuss the minimal number of elementaries required to write every matrix in $\SL_2(\cO_S)$ as a product when $| S | \geq 2$, and in particular, give a short proof of the known fact that at least five elementaries are needed for $\SL_2 \bigl( \mathbb{Z}[1/p] \bigr)$.
Section~\ref{S:Artin} discusses how Artin's Primitive Root Conjecture is related to results like our main theorem on bounded generation. It also presents a consequence of work of Lenstra related to Artin's Conjecture that will be used in our proof. 
Section~\ref{S:Pos_char} presents the proof of Theorem~\ref{T:1} in the special case where the global field~$K$ has positive characteristic.
Sections \ref{S:setting}--\ref{S:NumberFieldPf} of the paper are devoted to fields of characteristic zero. Section \ref{S:setting} assembles various notions (in addition to those listed in section~\ref{S:prelim}) required to prove Theorem \ref{T:1}, most of which go back to \cite{MRS}. In Section \ref{S:MRS}, we state several results from \cite{MRS} that play a crucial role in the proof of Theorem \ref{T:1}, which is given in Section \ref{S:NumberFieldPf}.

\begin{ack}
We thank Denis Osin for helpful comments on a previous version of the paper.
The first and the second authors would like to thank the Mathematics Department of the University of Virginia for its hospitality during their visits. 
The second author was partially supported by a research grant from the Natural Sciences and Engineering Research Council of Canada. 
\end{ack}

\section{Notations and properties of the power residue symbol} \label{S:prelim}

\subsection{Notations}\label{SS:Notations} Throughout this paper $K$ will denote a global field. 
We will use $p$ to denote the characteristic of $K$ \emph{if} it is positive. We let $V^K$ denote the set of (equivalence classes of) nontrivial valuations of $K$, with the subsets of archimedean and nonarchimedean valuations denoted $V^K_{\infty}$ and $V^K_f$, respectively (of course, $V^K_{\infty} = \varnothing$ if $\Char K > 0$).  Valuations $v \in V^K_f$ are assumed to be normalized so that their value group is $\Z$. Given a (nonempty) finite subset $S \subset V^K$ containing $V^K_{\infty}$, we let $\cO_S$ denote the corresponding ring of $S$-integers -- cf. Equation~(\ref{E:Sint}) above. Nonzero prime ideals of $\cO_S$ are in a natural bijective correspondence with valuations in $V^K \setminus S$, and the valuation corresponding to a prime ideal $\mathfrak{p}$ of $\cO_S$ will be denoted $v_{\mathfrak{p}}$. Furthermore, the residue field $\cO_S/\mathfrak{p}$ will be denoted $K(\mathfrak{p})$. 

For a valuation $v \in V^K$, we let $K_v$ denote the corresponding completion. For $v \in V^K_f$, we let $\cO_v$ and $\mathcal{P}_v$  respectively denote the valuation ring in $K_v$ and its maximal ideal. If $v = v_{\mathfrak{p}}$, we will occasionally write $K_{\mathfrak{p}}$ etc. instead of $K_v$ etc.  

As usual, we let $\roots_K$ denote the group of all roots of unity in $K$, and set $\mu = | \roots_K \!|$. Furthermore, for any $m \mid \mu$, we let $\roots_m$ denote the group of $m$-th roots of unity in $K$. For any integer $n$ prime to $\Char K$ we let $\zeta_n$ denote a primitive $n$-th root of unity in a fixed algebraic closure of $K$. Given an integer $m > 0$ and a prime $\ell$, we let $m_{\ell}$ denote the highest power of $\ell$ that divides $m$, and set $m_{\ell}^+ = \ell \cdot m_{\ell}$. The group of units of a ring $R$ with identity is denoted $R^{\times}$. A unit $u \in R^{\times}$ of infinite order is called \emph{fundamental} if the cyclic group $\langle u \rangle$ is a direct factor of $R^{\times}$. We note that when $| S | > 1$, the group $\cO_S^{\times}$ is an infinite finitely generated abelian group, hence possesses fundamental units. 

Given a finite Galois extension $F/K$ with Galois group $G = \Gal(F/K)$, for a prime $\mathfrak{p}$ of $K$ that is unramified in $F$, we let $(\mathfrak{p} , F/K)$ denote the conjugacy class in $G$ consisting of the Frobenius automorphisms associated with the lifts of $\mathfrak{p}$. 

\subsection{Power residue symbol and its properties}\label{SS:PRS} 

Fix $v \in V^K$. For every $m$ prime to $p$ such that $K_v$ contains a primitive $m$-th root of unity one defines the \emph{power residue symbol} of degree $m$: 
$$
\powres{{\star},{\star}}{v}{m} \colon K_v^\times \times K_v^\times \to \roots_m, 
$$
see \cite[Ch. XII]{AT} and \cite[(A.13)]{BMS}. (It should be noted that the definitions in these sources are reciprocal of one another, but this does not affect their properties; we will stick with the definition given in \cite{BMS}.) It is well-known that the power residue symbol is bi-multiplicative and skew-symmetric. If $v = v_{\mathfrak{p}}$ for a prime ideal $\mathfrak{p}$ of $\cO_S$ then we will write $\powres{{\star},{\star}}{\mathfrak{p}}{m}$ instead of $\powres{{\star},{\star}}{v}{m}$. We will use the following properties of the power residue symbol: 
    \begin{enumerate}[label=\upshape{(\roman*)}, ref=\roman*]
    
    \item \label{powres-power}
    \emph{$\powres{a,b}{v}{m} = 1$ if either $a$ or $b$ lies in the group of $m$-th powers $(K_v^{\times})^m$.} This immediately follows from the definition. 

    \item \label{powres-cong}
    \emph{If $v = v_{\mathfrak{p}}$, the degree $m$ is prime to $\Char K(\mathfrak{p})$, and $a \in \cO_{\mathfrak{p}}^{\times}$, then for any $b \in K_{\mathfrak{p}}^{\times}$ we have 
$$
\powres{a , b}{\mathfrak{p}}{m} \equiv a^{\,v(b) \,\cdot\, (q-1)/m}  \pmod{\mathcal{P}_{\mathfrak{p}}}, 
$$
where $q = | K(\mathfrak{p})|$.} This is \cite[(A.16)]{BMS}. We observe that reduction modulo $\mathcal{P}_{\mathfrak{p}}$ is injective on $\roots_m$ (cf. Lemma \ref{L:A103}), which, for example, implies that if both $a , b \in \cO_{\mathfrak{p}}^{\times}$ then $\powres{a , b}{\mathfrak{p}}{m} = 1$. 

    \item \label{powres-mpower}
    \emph{If $v = v_{\mathfrak{p}}$ and $m$ is prime to $\Char K(\mathfrak{p})$, then for all $a , b \in K_{\mathfrak{p}}^{\times}$ satisfying $v(a) = 1$, $b \in \cO_{\mathfrak{p}}^{\times}$, and $\powres{a , b}{\mathfrak{p}}{m} = 1$, we have $b \equiv c^m \pmod{\mathcal{P}_{\mathfrak{p}}}$ for some $c \in \cO_{\mathfrak{p}}$.} Indeed, it follows from~(\ref{powres-cong}) that in our situation $b^{(q-1)/{m}} \equiv 1 \pmod{\mathcal{P}_{\mathfrak{p}}}$, where $q = | K(\mathfrak{p}) |$. So, the residue of $b$ is in ${(K(\mathfrak{p})^{\times}})^m$. 

    \item (Reciprocity Law) \label{powres-reciprocity}
    \emph{For all $a , b \in K^{\times}$, we have $\displaystyle 
\prod_{v \in V^K} \powres{a , b}{v}{\mu} = 1$.} See \cite[Ch. XII, Theorem 13]{AT} and \cite[(A.19)]{BMS}.  

    \item \label{SS:PRS-PrimRoot}
    \emph{If $v$ is not complex \textup(i.e., $K_v \neq \mathbb{C}$\textup) then there exist nonzero $a , b \in \cO_S$ such that $\powres{a , b}{v}{\mu}$ is a primitive $\mu$-th root of unity.} Indeed, since $v$ is not complex, we can find $a_0 , b_0 \in K_v^{\times}$ such that $\zeta = \powres{a_0 , b_0}{v}{\mu}$ is a primitive $\mu$-th root of unity. Using weak approximation in conjunction with the fact that the subgroup $({K_v^{\times}})^{\mu} \subset K_v^{\times}$ is open by the assumption that $\mu$ is prime to $p$, we can find $a \in a_0 ({K_v^{\times}})^{\mu} \cap K^{\times}$ and $b \in b_0 ({K_v^{\times}})^{\mu} \cap K^{\times}$. Furthermore, multiplying $a$ and $b$ by the $\mu$-th powers of appropriate nonzero elements of $\cO_S$, we may assume that $a , b \in \cO_S$. On the other hand, it follows from~(\ref{powres-power}) and the bi-multiplicativity of the symbol that  $\powres{a , b}{v}{\mu} = \zeta$. 

    \item \label{SS:PRS-proots}
    \emph{Suppose that $\Char K = 0$. Let $v \in V^K_f$, let $p$ be the associated prime, and let $e \geq 0$ be such that $p^e \mid \mu_{K_v}$. Then $\powres{\cO_v^{\times} , \cO_v^{\times}}{v}{p^e} = \roots_{p^e}$.} This is a particular case of \cite[(A.17)]{BMS}. 

    \item \label{SS:PRS-takepwer}
    \emph{For $k \mid m$, the symbol $\powres{a,b}{v}{m/k}$ equals the $k$-th power $\powres{a,b}{v}{m}^k$.} See \cite[(A.13)]{BMS}.

    \end{enumerate}

\section{On optimality of Theorem \ref{T:1}}\label{S:Opt}

Before proceeding to the proof of Theorem \ref{T:1} -- first in positive characteristic, and then in characteristic zero -- we would like to discuss two aspects of the optimality of this result. One deals with the restriction $| S | \geq 2$ -- we will prove that this restriction can {\it never} be omitted. Second, under the assumption that $| S | \geq 2$, Theorem \ref{T:1} yields the bound of {\it seven} on the number of elementaries required to write every matrix in $\SL_2(\cO_S)$ as a product. On the other hand, Proposition 5.1 in \cite{MRS} shows that $\SL_2\bigl(\mathbb{Z}[1/p]\bigr)$ for every prime $p > 7$ contains matrices that are {\bf not} products of {\it four} elementaries. This leaves us with only three possibilities, viz.\ 5, 6, or 7, for the minimal number of elementaries needed in the general case. We will put the argument in \cite[\S 5]{MRS} in a more general setting, shorten it  and discuss a connection with the stable range condition. We hope that this discussion will be  helpful for determining the optimal bound in the future.

\subsection{The case \texorpdfstring{$\vert S \vert = 1$}{|S| = 1}}\label{S:rank1}

\begin{prop} \label{P:rank1}
If $\vert S \vert = 1$ then the group $\Gamma = \SL_2(\cO_S)$ is not boundedly generated by elementary matrices.
\end{prop}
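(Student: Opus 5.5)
The plan is to use homogeneous quasimorphisms. Recall that a homogeneous quasimorphism $\varphi\colon\Gamma\to\mathbb{R}$ has three standard properties.
\begin{enumerate}
\item It is constant on conjugacy classes.
\item Its restriction to any abelian subgroup is a homomorphism.
\item If $\Gamma=U_1U_2\cdots U_n$ as sets and $\varphi$ is bounded on each $U_i$, then $\varphi$ is bounded on $\Gamma$, because $|\varphi(xy)-\varphi(x)-\varphi(y)|\le D$. A bounded homogeneous quasimorphism is identically zero.
\end{enumerate}
Suppose, for contradiction, that $\Gamma=\SL_2(\cO_S)$ is a product of $n$ elementary matrices. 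Then $\Gamma=U_1\cdots U_n$, where each $U_i$ is one of the elementary subgroups $U^+$ or $U^-$, and $U^\pm\cong(\cO_S,+)$. It therefore suffices to produce a nonzero homogeneous quasimorphism on $\Gamma$ that vanishes on $U^+$ and $U^-$.

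To get such a $\varphi$, I will argue by dimension count. By property (2), restriction gives a linear map
\[
QH(\Gamma)\to \operatorname{Hom}(\cO_S,\mathbb{R})\times\operatorname{Hom}(\cO_S,\mathbb{R}).
\]
The target is finite-dimensional in both characteristics.
\begin{itemize}
\item If $\Char K=p>0$, then $(\cO_S,+)$ is $p$-torsion, so every homomorphism to $\mathbb{R}$ is zero and the target is $0$.
\item If $\Char K=0$, then $|S|=1$ forces $K=\Q$ or $K$ imaginary quadratic, with $\cO_S$ the ring of integers. In that case $\cO_S\cong\Z$ or $\Z^2$, and the target has dimension at most $4$.
\end{itemize}
Hence, once $QH(\Gamma)$ is shown to be infinite-dimensional, the kernel of this map is nonzero. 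Any $\varphi\ne 0$ in the kernel vanishes on $U^\pm$, so by property (3) it is bounded on $\Gamma$, hence $\varphi=0$, a contradiction.

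The main step, and the main obstacle, is proving $\dim QH(\Gamma)=\infty$. I would split into cases.
\begin{itemize}
\item For $K=\Q$: $\SL_2(\Z)\cong \Z/4*_{\Z/2}\Z/6$ is virtually free nonabelian. By Brooks/Epstein--Fujiwara its space of homogeneous quasimorphisms is infinite-dimensional.
\item For $K$ imaginary quadratic: $\Gamma$ is a nonuniform lattice in $\SL_2(\mathbb{C})$ acting on $\mathbb{H}^3$. It is hyperbolic relative to its virtually abelian cusp subgroups, hence acylindrically hyperbolic (Osin). By Bestvina--Fujiwara (and Dahmani--Guirardel--Osin) it has infinite-dimensional $QH$. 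Alternatively, one can use Grunewald--Schwermer to get a finite-index subgroup surjecting onto a free nonabelian group, then induce quasimorphisms; a finite-index subgroup suffices after a small modification of the argument.
\item For $\Char K>0$: by Serre's theory of the Bruhat--Tits tree (Serre, \emph{Trees}, Ch.~II), $\Gamma$ acts on the tree of $\SL_2(K_v)$, $S=\{v\}$, with quotient a ray-like graph of finite groups. This exhibits $\Gamma$ as a nontrivial amalgam $A*_C B$ in which $C$ has index $\ge 3$ in one factor and $\ge 2$ in the other, e.g.\ in Nagao's decomposition for $\mathbb{F}_q[t]$. Fujiwara's theorem on quasimorphisms of amalgamated products then gives $\dim QH(\Gamma)=\infty$.
\end{itemize}
The delicate point is checking the index and nondegeneracy hypotheses for a general curve. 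Concretely, this means checking that the action on the tree is not elementary, i.e.\ that $\Gamma$ contains two independent hyperbolic elements, and that the WPD condition of Bestvina--Fujiwara holds. I expect WPD to follow from discreteness of $\Gamma$ in $\SL_2(K_v)$ together with finiteness of vertex stabilizers' orbits modulo the unipotent cusp groups.
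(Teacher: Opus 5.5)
Your reduction is correct and follows a genuinely different route from the paper. A homogeneous quasimorphism restricts to a homomorphism on each $U^{\pm}$. So any element of the kernel of $QH(\Gamma)\to\operatorname{Hom}(U^+,\mathbb{R})\times\operatorname{Hom}(U^-,\mathbb{R})$ would be bounded, hence zero, if $\Gamma$ were a bounded product of the $U^{\pm}$. In characteristic $p$ the target vanishes, so there you only need $QH(\Gamma)\neq 0$.

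The paper instead takes a finite-index subgroup $\Delta$ with a surjection $\varphi\colon\Delta\to F$ onto a nonabelian free group (Grunewald--Schwermer, Lubotzky). In characteristic $0$, elementary bounded generation gives abstract bounded generation by cyclic subgroups. That passes to $\Delta$ and then to $F$, which is impossible. In characteristic $p$, the normal closure $\Omega$ of the $p$-groups $U^{\pm}\cap\Delta$ has finite index, yet $\varphi(\Omega)=1$ because $F$ is torsion-free.

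So the paper needs free quotients of finite-index subgroups and then only elementary group theory. You need relative or acylindrical hyperbolicity plus the Epstein--Fujiwara/Bestvina--Fujiwara machinery, and in exchange your argument is uniform in the characteristic. Your characteristic-$0$ cases are fine as cited.

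The characteristic-$p$ case, however, is not established as written. There are three problems.

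\emph{Fujiwara's hypothesis is misquoted.} His criterion for $A*_C B$ asks for $|C\backslash A/C|\ge 3$ double cosets (and $[B:C]\ge 2$), not index $\ge 3$. The index version is false: $\SL_2(\Z[1/p])$ is an amalgam of two copies of $\SL_2(\Z)$ over a subgroup of index $p+1$ in each. Yet it is boundedly generated by elementaries, so by your own argument its $QH$ has dimension $\le 2$. In Nagao's decomposition $\SL_2(\mathbb{F}_q[t])=\SL_2(\mathbb{F}_q)*_{B(\mathbb{F}_q)}B(\mathbb{F}_q[t])$, the factor $\SL_2(\mathbb{F}_q)$ has only two $B(\mathbb{F}_q)$-double cosets (Bruhat). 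You must instead take $A=B(\mathbb{F}_q[t])$, which has infinitely many.

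\emph{No amalgam is produced for a general curve.} For a general curve and place, Serre's quotient $\Gamma\backslash X$ is a finite graph with finitely many cuspidal rays attached, not a ray. So no amalgam of the shape you describe has actually been exhibited.

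\emph{WPD is left as an expectation.} The cusp groups play no role in it, and the missing argument is short:
\begin{enumerate}
\item For $S=\{v\}$ one has $\cO_S\cap\mathcal{P}_v=0$, since such an element has no poles and so is a constant vanishing at $v$. Hence $\Gamma$ is discrete in $\SL_2(K_v)$.
\item Vertex stabilizers of $\Gamma$ are intersections with $\mathrm{GL}_2(K_v)$-conjugates of the compact group $\SL_2(\cO_v)$, so they are finite.
\item The Bruhat--Tits tree is locally finite, so the action is metrically proper, and every loxodromic element is WPD.
\item A loxodromic exists: for nonconstant $x\in\cO_S$, the element $e_{12}(x)e_{21}(x)$ has trace $2+x^2$ of valuation $2v(x)<0$.
\item $\Gamma$ is not virtually cyclic, since it contains the infinite elementary abelian $p$-group $U^+$.
\end{enumerate}
Bestvina--Fujiwara then gives $\dim QH(\Gamma)=\infty$, and your argument goes through.
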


\begin{proof}
The critical input needed for the argument is that if $\vert S \vert = 1$ then $\Gamma$ has a subgroup $\Delta$ of finite index that admits a surjective
homomorphism $\varphi \colon \Delta \to F$ onto a nonabelian free group, see \cite{Lub1}. More precisely, in characteristic zero the only possibilities with $\vert S \vert = 1$ are: (1) $\cO_S = \Z$, and (2) $\cO_S$ is the ring of integers of an imaginary quadratic field $K = \Q(\sqrt{-d})$. It is well-known that in the first case, the commutator subgroup $[\Gamma , \Gamma]$ has index 12 in $\Gamma$ and is a free group on two generators, so the required fact trivially holds. In the second case, the required fact was established by Grunewald and Schwermer \cite{GS}. In positive characteristic, the argument was given in \cite{Lub1}. (It should be noted that this result is an immediate consequence of the structure theory of lattices in the rank one groups over local fields of positive characteristic developed in Lubotzky \cite{Lub2} and Baumgartner \cite{Baum}.)

As was observed in \cite{MRS}, in characteristic zero bounded elementary generation of $\Gamma$ would imply its bounded generation (BG) as an abstract group, i.e., the existence of a factorization
$$
\Gamma = \langle \gamma_1 \rangle \cdots \langle \gamma_r \rangle
$$
for some elements $\gamma_1, \ldots , \gamma_r \in \Gamma$ (not necessarily distinct), where $\langle \gamma_i \rangle$ denotes the cyclic subgroup generated by $\gamma_i$. Being a subgroup of finite index, the subgroup $\Delta$ in the above notations would also have (BG), yielding (BG) for the nonabelian free group $F$. It is easy to see, however, that $F$ does not have (BG), providing a contradiction, thus proving the proposition in this case.

Let now $K$ be a field of characteristic $p > 0$. Bounded elementary generation of $\Gamma$ means the existence of a factorization
\begin{equation}\label{E:fact}
\Gamma = V_1 \cdots V_r
\end{equation}
where each $V_i$ coincides with either the subgroup $U^+$ of upper unitriangular matrices, or the subgroup $U^-$ of lower unitriangular matrices. We may assume that the finite-index subgroup $\Delta$ as above is normal in $\Gamma$. Then the normal subgroup $\Omega \subset \Gamma$ generated by the conjugates of $U^+ \cap \Delta$ and $U^- \cap \Delta$ is contained in $\Delta$. It easily follows from Equation~(\ref{E:fact}) that the index $[\Gamma : \Omega]$ does not exceed $t^r$ where 
$$ t = [U^+ : U^+ \cap \Delta] = [U^- : U^- \cap \Delta] \leq [\Gamma : \Delta], $$
hence is finite. In particular, $\varphi(\Omega) \neq \{ e \}$. On the other hand, any $\Gamma$-conjugate of
$U^+ \cap \Delta$ or $U^- \cap \Delta$ is a group of exponent $p$, hence must be mapped by $\varphi$ to $\{ e \}$ since $F$ is torsion-free. It follows that $\varphi(\Omega) = \{ e \}$. A contradiction, completing the proof of the proposition.
\end{proof}

\subsection{At least five elementaries are needed} \label{S:need5}

Recall that a ring $R$ has \emph{stable range~1} if for every $a,b \in R$, such that $aR + bR = R$, there exists $t \in R$, such that $b + at$ is a unit.
It is well known that if $R$ is commutative, then this condition implies $\SL_2(R) = U^- \, U^+ \, U^- \, U^+$ -- cf. Lemma \ref{corner1}. The converse was stated in \cite[Remark 8.2]{KPV} but no proof was given, so we will provide an argument for some special commutative rings -- see Proposition \ref{P:3.1}. This still falls short of proving that $\SL_2(\cO_S)$ contains matrices that are not products of four elementaries even though it is easy to show that $\cO_S$ does not have stable range 1 for any finite $S$. So, we will establish one additional statement that enables one  to give a short proof of \cite[Proposition 5.1]{MRS} that  $\SL_2 \bigl( \Z[1/p] \bigr)$ contains ``non-products'' of four elementaries for any prime $p > 7$. 

\begin{lemma} \label{corner1}
Assume $R$ is a commutative ring, and $(a,b)$ is the first row of $A \in \SL_2(R)$. Then we have
$A \in U^- \, U^+ \, U^- \, U^+$ if and only if there exists $t \in R$, such that $b + at$ is a divisor of $a - 1$.
\end{lemma}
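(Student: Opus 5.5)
The plan is to reduce the question to one about the first row alone, using two facts. Right multiplication by an upper unitriangular matrix acts on the first row by a column operation. A matrix in $\SL_2(R)$ is determined by its first row up to left multiplication by an element of $U^-$. Write $U(y) = \begin{pmatrix} 1 & y \\ 0 & 1 \end{pmatrix}$ and $L(x) = \begin{pmatrix} 1 & 0 \\ x & 1 \end{pmatrix}$. Then $A \in U^-U^+U^-U^+$ if and only if $A\,U(s) \in U^-U^+U^-$ for some $s \in R$, taking $s = -w$ if $A = L(x)U(y)L(z)U(w)$. The first row of $A\,U(s)$ is $(a, b+as)$. It therefore suffices to prove the following claim.

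\emph{Claim.} A matrix $B \in \SL_2(R)$ with first row $(a',b')$ lies in $U^-U^+U^-$ if and only if $b' \mid a'-1$.

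For the forward direction, note that left multiplication by $L(x)$ does not change the first row. Since $U(y)L(z) = \begin{pmatrix} 1+yz & y \\ z & 1\end{pmatrix}$, every element $L(x)U(y)L(z)$ has first row $(1+yz, y)$, so $b' = y$ divides $a'-1 = yz$.

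For the converse, suppose $a' - 1 = b'z$. Set $B' = U(b')L(z)$, whose first row is $(1+b'z, b') = (a',b')$. Then $B(B')^{-1}$ lies in $\SL_2(R)$ and has first row $(1,0)$. Its $(1,2)$-entry is $0$ and its $(1,1)$-entry is $1$, so the determinant condition forces the $(2,2)$-entry to be $1$. Hence $B(B')^{-1} = L(x)$ for some $x \in R$, and $B = L(x)U(b')L(z) \in U^-U^+U^-$.

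Combining the reduction with the Claim: $A \in U^-U^+U^-U^+$ if and only if there is $t \in R$ with $b+at \mid a-1$, where $t$ plays the role of $s$. Commutativity of $R$ is used only for the determinant and for the divisibility notion to make sense. I do not expect a real obstacle here. The only point needing care is the identification of matrices with first row $(1,0)$ as elements of $U^-$.
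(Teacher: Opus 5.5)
Your proof is correct and is essentially the paper's argument: both reduce to first-row computations, using that the matrices in $\SL_2(R)$ with a given first row form a single left $U^-$-coset. The difference is only bookkeeping. The paper strips $U^+U^-$ off the right and recognizes $U^-U^+$ as the matrices with first row $(1,\star)$, which yields the equation $a+s(b+at)=1$. You strip only $U^+$ and characterize $U^-U^+U^-$ directly by the condition $b'\mid a'-1$.
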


\begin{proof}
$(\Leftarrow)$ Let $t \in R$ be as in the statement. The following transformations of the first row $$(a , b) \, \Rightarrow \, (a , b + at) \, \Rightarrow \, (1 , b + at)$$ can be implemented 
by multiplying $A$ on the right first by a matrix from $U^+$, and then by a matrix from $U^-$. A direct computation shows that 
$$ U^- \, U^+ = \bigset{ \begin{pmatrix} 1 && y \\ x && 1 + xy \end{pmatrix} }{ x,y \in R } . $$
In particular, every element of $\SL_2(R)$ with the first row of the form $(1 , \star)$ belongs to $U^-\, U^+$. Thus, $A \bigr( U^+ \, U^-\bigl) \,  \cap \, U^- \, U^+ \neq \varnothing$, and the required fact follows. 

$(\Rightarrow)$ We have $A \bigr( U^+ \, U^-\bigl) \,  \cap \, U^- \, U^+ \neq \varnothing$, so it follows from the above description of $U^- \, U^+$ that there exist elementaries $e_{12}(t) \in U^+$ and $e_{21}(s) \in U^-$ such that the first row of the matrix $A \, e_{12}(t) \, e_{21}(s)$ is of the form $(1 , \star)$. This amounts to the equation $a + s(b +at) = 1$, showing that $b + at$ divides $a-1$. 
\end{proof}

\begin{prop}\label{P:3.1}
Let $R$ be an integral domain such that the quotient $R/\mathfrak{a}$ is finite for every nonzero ideal~$\mathfrak{a}$ and the group of units $R^{\times}$ is infinite. Then $R$ has stable range~1 if and only if\/ $\SL_2(R) = U^- \, U^+ \, U^- \, U^+$.
\end{prop}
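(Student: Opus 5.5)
The direction ``stable range~1 $\Rightarrow$ $\SL_2(R) = U^- \, U^+ \, U^- \, U^+$'' follows directly from Lemma~\ref{corner1}. Let $A \in \SL_2(R)$ have first row $(a,b)$. Then $aR + bR = R$, so stable range~1 gives $t \in R$ with $b + at \in R^{\times}$. A unit divides $a-1$, so Lemma~\ref{corner1} yields $A \in U^- \, U^+ \, U^- \, U^+$. This direction uses neither the finiteness of $R/\mathfrak{a}$ nor the infinitude of $R^{\times}$; those hypotheses are only needed for the converse.

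For the converse, assume $\SL_2(R) = U^- \, U^+ \, U^- \, U^+$. Let $aR + bR = R$. We must find $t$ with $b + at \in R^{\times}$. If $a = 0$ then $b$ is a unit and we are done, so assume $a \neq 0$.

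The key observation is that for any unit $w$ the row $(aw, b)$ is still unimodular, and
$$\{\, b + awt \mid t \in R \,\} = \{\, b + at \mid t \in R \,\}.$$
So it suffices to find one unit $w$ for which some element of this common set is a unit. Apply Lemma~\ref{corner1} to a matrix with first row $(aw, b)$. This gives $t_w$ such that $d_w := b + a t_w$ divides $aw - 1$. Every $d_w$ lies in the fixed coset $b + aR$, and $d_w \mid aw - 1$.

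The plan is to exploit the freedom in $w$ using the two hypotheses on $R$. Fix a unit $\varepsilon$ of infinite order. For each $k$ consider $w = \varepsilon^k$, giving $d_k \in b + aR$ with $d_k \mid a\varepsilon^k - 1$. Since each quotient $R/\mathfrak{a}$ is finite, the image of $\varepsilon$ in $(R/\mathfrak{a})^{\times}$ has finite order for every nonzero ideal $\mathfrak{a}$. Hence exponents can be chosen so that $\varepsilon^k$ satisfies prescribed congruences modulo any given nonzero ideal.

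First I would use this to choose $k$ such that $a\varepsilon^k - 1$ has controlled divisors. For instance, take $k$ divisible by the order of $\varepsilon$ modulo a suitable ideal, so that $a\varepsilon^k - 1 \equiv a - 1$ there. Then, for two such exponents $k \neq j$, any common divisor of $a\varepsilon^k - 1$ and $a\varepsilon^j - 1$ divides $a(\varepsilon^k - \varepsilon^j)$. Such a divisor is coprime to $a$, since it divides $a\varepsilon^k - 1$, so it divides the unit multiple $\varepsilon^k - \varepsilon^j$ of $1 - \varepsilon^{j-k}$. The idea is to iterate this, replacing $b$ by the previously found $d_k$ (which leaves the coset $b + aR$ unchanged), so that the resulting divisor is squeezed into dividing quantities whose only common divisors are units. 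The finiteness of $R/\mathfrak{a}$ should ensure that only finitely many non-associate candidates arise, and I expect a pigeonhole argument on ideals to finish.

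The main obstacle is exactly this last step: forcing some $d_w$ to be a unit, rather than merely a divisor of $aw - 1$. I would first try to choose $w$ so that $aw \equiv 1$ modulo $d$ fails for every non-unit candidate $d$ in a finite list, which the finiteness of the residue rings makes possible.
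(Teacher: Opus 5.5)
Your forward direction is correct and matches the paper. The converse has a genuine gap, and it is exactly the step you flag as the main obstacle: nothing in your sketch forces any $d_w$ to be a unit. Restricting the first entry to unit multiples $aw$ is what blocks you. Lemma~\ref{corner1} then only gives $d_w \mid aw-1$, and $aw-1$ is in general neither a unit nor free of non-unit divisors in $b+aR$.

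The proposed squeezing also fails. The elements $d_k$ and $d_j$ are different, and after replacing $b$ by $d_k$ the new divisor of $a\varepsilon^j-1$ need not divide $a\varepsilon^k-1$. So you never obtain a single element dividing $\varepsilon^k-\varepsilon^j$. Nor is there a fixed finite list of candidates for a pigeonhole, since the divisors of $a\varepsilon^k-1$ change with $k$. Two smaller points: powers of $\varepsilon$ only reach the cyclic subgroup it generates modulo an ideal, not arbitrary prescribed residues. And the hypotheses do not guarantee a unit of infinite order at all, e.g.\ $R=\overline{\mathbb{F}}_p$.

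The missing idea is to replace the first entry by a \emph{non-unit} multiple of $a$ of the form $u+1$ with $u\in R^{\times}$. Then the first entry minus $1$ is the unit $u$, so every divisor of it is automatically a unit. The construction:
\begin{enumerate}
\item Since $R/aR$ is finite and $R^{\times}$ is infinite, there are units $u_1\neq u_2$ with $u_1\equiv u_2 \pmod{aR}$. Then $u=-u_1u_2^{-1}$ satisfies $u\equiv -1\pmod{aR}$ and $u\neq -1$.
\item The row $(u+1,b)$ need not be unimodular, so first move $b$ inside $b+aR$. Because $R/(u+1)R$ is finite, only finitely many maximal ideals contain $u+1$. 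The Chinese Remainder Theorem gives $s$ with $s\equiv 1$ modulo those containing $b$ and $s\equiv 0$ modulo the others. Using $aR+bR=R$, the element $c=b+as$ lies in none of them, so $(u+1)R+cR=R$.
\item Apply Lemma~\ref{corner1} to a matrix with first row $(u+1,c)$. It gives $d=c+(u+1)t$ dividing $(u+1)-1=u$, hence $d\in R^{\times}$.
\item Finally $d\in b+aR$ because $u+1\in aR$.
\end{enumerate}
This is the paper's argument, which it phrases by contradiction.
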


\begin{proof}
($\Rightarrow$) Follows immediately from Lemma~\ref{corner1}.

($\Leftarrow$) Suppose $R$ does not have stable range 1, i.e., there exist $a , b \in R$ such that $aR + bR = R$ but there is no $\lambda \in R$ for which $b + a\lambda$ is a unit. Since the quotient $R/aR$ is finite but the unit group $R^{\times}$ is infinite, there exists $u \in R^{\times}$, $u \neq -1$, satisfying $u \equiv -1 \pmod{aR}$. The finiteness of  $R/(u + 1)R$ implies that the element $(u+1)$ is contained in only finitely many maximal ideals of $R$, and we let $\mathfrak{m}_1, \ldots , \mathfrak{m}_m$ (resp., $\mathfrak{n}_1, \ldots , \mathfrak{n}_n$) denote those maximal ideals that contain (resp., do not contain) $b$. By the Chinese Remainder Theorem, we can find $s \in R$ satisfying 
$$
\text{$s \equiv 1 \pmod{\mathfrak{m}_1 \cdots \mathfrak{m}_m}$ \ and \ $s \equiv 0 \pmod{\mathfrak{n}_1 \cdots \mathfrak{n}_n}$.} 
$$
Set $c = b + as$. Then for each $i = 1, \ldots , m$ we have $c \equiv a \not\equiv 0 \pmod{\mathfrak{m}_i}$ as $b \in \mathfrak{m}_i$ and $a$ and $b$ are relatively prime. Furthermore, for each $j = 1, \ldots , n$ we have $c \equiv b \not\equiv 0 \pmod{\mathfrak{n}_j}$ by construction. Thus, there is no maximal ideal of $R$ that contains both $(u+1)$ and $c$, hence $(u+1)R + cR = R$. 

Let $A \in \SL_2(R)$ be a matrix with the first row $( u + 1 , c)$. To obtain a contradiction, we will show that $A \notin U^- \, U^+ \, U^- \, U^+$. Indeed, otherwise by Lemma \ref{corner1}, there would exist $t \in R$ such that $d \coloneqq c + (u+1)t$ divides $(u+1) - 1 = u$. Since $u$ is a unit, this would imply that $d$ is also a unit. On the other hand, since $u+1 \equiv 0 \pmod{aR}$, we have 
$$
d \equiv c \equiv b \pmod{aR}, 
$$
i.e., $d$ is of the form $b + a\lambda$, contradicting our initial assumption that $R$ has no units of this form. 
\end{proof}

The ring $\cO_S$ with $| S | \geq 2$ satisfies the assumptions of Proposition \ref{P:3.1}, so the fact that $\cO_S$ does not have stable range 1 for any finite $S$ implies that for $\Gamma = \SL_2(\cO_S)$ we have $\Gamma \neq U^- \, U^+ \, U^- \, U^+$. By taking inverses, we obtain that also $\Gamma \neq U^+ \, U^- \, U^+ \, U^-$. However, these two facts still do not allow us to conclude that 
$$
\Gamma \neq U^- \, U^+ \, U^- \, U^+ \bigcup U^+ \, U^- \, U^+ \, U^-, 
$$
which is precisely the statement that $\Gamma$ contains matrices that are not products of four elementaries. Such claims can be proved in some situations by using the following generalization of \cite[Lemma 5.2]{MRS}. 

\begin{lemma} \label{L:1+unit}
Assume that $u$ and~$v$ are units in an integral domain~$R$, and let 
	$$ \mathfrak{a}  = (u + 1)R + (v + 1)R . $$
Any matrix $A \in \SL_2(R)$ of the form
	   $$ A = \begin{bmatrix} u + 1 & b \\ c & v + 1 \end{bmatrix} $$
which is a product of four elementary matrices, satisfies the congruence 
	$$ A \equiv \begin{bmatrix} 0 & w \\ -w^{-1} & 0 \end{bmatrix} \pmod{\mathfrak{a}} 
	\quad $$
\end{lemma}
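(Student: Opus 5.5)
The plan is to reduce the statement to Lemma~\ref{corner1}. First observe what is automatic and what is not. Since $u \equiv -1$ and $v \equiv -1 \pmod{\mathfrak{a}}$, the diagonal entries of $A$ vanish modulo $\mathfrak{a}$. The determinant condition $(u+1)(v+1) - bc = 1$ then gives $-bc \equiv 1 \pmod{\mathfrak{a}}$, so $c \equiv -b^{-1}$ in $R/\mathfrak{a}$. Thus $A \equiv \begin{bmatrix} 0 & b \\ -b^{-1} & 0\end{bmatrix}$ holds for every such $A$. I read the statement as requiring $w$ to be a \emph{unit of $R$}; otherwise it is vacuous. So the real content is: if $A$ is a product of four elementaries, then $b$ is congruent modulo $\mathfrak{a}$ to a unit $w \in R^{\times}$, and then automatically $c \equiv -w^{-1}$.

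A product of four elementary matrices lies in $U^- U^+ U^- U^+$ or in $U^+ U^- U^+ U^-$, since fewer factors, or adjacent factors of the same type, can be absorbed by inserting identity matrices. I treat the two cases separately.

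\emph{Case $A \in U^- U^+ U^- U^+$.} The first row of $A$ is $(u+1, b)$. Lemma~\ref{corner1} gives $t \in R$ such that $d \coloneqq b + (u+1)t$ divides $(u+1) - 1 = u$. Since $u$ is a unit, $d$ is a unit. Moreover $d \equiv b \pmod{(u+1)R}$, hence modulo $\mathfrak{a}$. Take $w = d$.

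\emph{Case $A \in U^+ U^- U^+ U^-$.} Taking inverses reverses the order of the factors and keeps each factor in its subgroup. Hence
\[ A^{-1} = \begin{bmatrix} v+1 & -b \\ -c & u+1 \end{bmatrix} \in U^- U^+ U^- U^+ . \]
Apply Lemma~\ref{corner1} to its first row $(v+1, -b)$. This gives $t$ with $d = -b + (v+1)t$ dividing $v$, so $d$ is a unit and $d \equiv -b \pmod{\mathfrak{a}}$. Take $w = -d$.

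In both cases the determinant computation above then yields the full congruence. The only real subtlety is noticing the inverse trick, which swaps the roles of $u$ and $v$ and thereby lets Lemma~\ref{corner1} handle the second ordering. This is why the ideal $\mathfrak{a}$ must involve both $u+1$ and $v+1$.
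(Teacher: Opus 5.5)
Your proposal is correct and follows essentially the paper's proof: reduce to the case $A \in U^-\,U^+\,U^-\,U^+$ by passing to $A^{-1}$ if necessary, then use Lemma~\ref{corner1} to produce a unit $w \equiv b \pmod{\mathfrak{a}}$, and finish with the determinant. The one difference is that you carry out the inverse case explicitly (first row $(v+1,-b)$, giving $w=-d$), which the paper compresses into ``replacing $A$ with its inverse if necessary.''
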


\begin{proof}
Replacing $A$ with its inverse if necessary, we may assume that $A \in U^- \, U^+ \, U^- \, U^+$. By Lemma~\ref{corner1}  there exists $t \in R$, such that $w \coloneqq b + (u+1)t$ is a divisor of $(u + 1) - 1 = u \in R^{\times}$, and hence is a unit itself. We have $b \equiv w \pmod{\mathfrak{a}}$, and both diagonal entries are in~$\mathfrak{a}$. Therefore $1 = \det A \equiv -w c \pmod{\mathfrak{a}}$, so $c \equiv -w^{-1} \pmod{\mathfrak{a}}$, as required. 
\end{proof}

This lemma provides a short proof of the result we already mentioned above, which was established in~\cite{MRS}  with the additional hypothesis $p > 7$.

\begin{cor}[cf.\ {\cite[Proposition  5.1]{MRS}}]
If $p$ is any rational prime, then not every matrix in $\SL_2 \bigl( \Z[1/p] \bigr)$ is a product of four elementary matrices.
\end{cor}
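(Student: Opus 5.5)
The plan is to apply Lemma~\ref{L:1+unit} with $R = \Z[1/p]$ and $u = v = p^k$ for a suitable $k \geq 2$. Then $\mathfrak{a} = (p^k+1)R$. Put $N = p^k + 1$; since $p$ is invertible modulo $N$, we have $R/\mathfrak{a} \cong \Z/N\Z$.

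\textbf{The matrix.} I will look for
$$ A = \begin{bmatrix} p^k + 1 & d \\ c & p^k + 1 \end{bmatrix} , $$
which lies in $\SL_2(R)$ exactly when $dc = (p^k+1)^2 - 1 = p^k(p^k+2)$. So I take $d$ to be a positive integer divisor of $p^k+2$ and set $c = p^k(p^k+2)/d \in \Z$.

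\textbf{What must be avoided.} If $A$ were a product of four elementaries, Lemma~\ref{L:1+unit} would give $d \equiv w \pmod{N}$ for some unit $w = \pm p^j$ of $R$. The residues of the units modulo $N$ are easy to list. Because $p^k \equiv -1 \pmod N$, the element $p$ has order $2k$ and $-1$ is already a power of $p$. Reducing $\pm p^j$ into $[0,N)$ therefore gives exactly the $2k$ residues
$$ p^j \quad\text{and}\quad N - p^j = p^k + 1 - p^j, \qquad 0 \leq j < k . $$
Hence it suffices to find $k \geq 2$ and a divisor $d$ of $p^k+2$ with $1 < d < N$ such that $d$ is neither a power of $p$ nor of the form $p^k + 1 - p^j$.

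\textbf{Choice of $d$ and $k$.} I will take $d = 3$ whenever possible.
\begin{itemize}
\item If $p \equiv 1 \pmod 3$, any $k \geq 2$ gives $3 \mid p^k + 2$.
\item If $p \equiv 2 \pmod 3$, any even $k$ gives $3 \mid p^k + 2$.
\item If $p = 2$, take $k = 4$: then $N = 17$ and $18 = 2^4 + 2$ is divisible by $3$. The powers of $2$ modulo $17$ form the subgroup $\{1,2,4,8,16,15,13,9\}$, which does not contain $3$.
\item If $p = 3$, the choice $d = 3$ fails, so take $k = 5$ and $d = 5$. Then $3^5 + 2 = 245 = 5 \cdot 49$ and $N = 244$. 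Clearly $5$ is not a power of $3$, and $244 - 5 = 239$ is not a power of $3$ either.
\end{itemize}
For $d = 3$ with $p \neq 3$, the number $3$ is not a power of $p$. The equation $p^k + 1 - p^j = 3$, i.e. $p^k - p^j = 2$, has only the solution $p = 2$, $k = 1$, $j = 0$, which is excluded by $k \geq 2$. (For $p = 2$ this check is already replaced by the explicit subgroup computation above.)

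\textbf{Main obstacle.} The delicate step is the exact description of the image of $R^{\times}$ in $R/\mathfrak{a}$. The argument hinges on the fact that $-1 \equiv p^k$, so the image is precisely the list of $2k$ residues above. One must also check the small primes $2$ and $3$ separately, as was done in the case list. Once this is in place, Lemma~\ref{L:1+unit} shows that $A$ is not a product of four elementary matrices.
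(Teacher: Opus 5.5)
Your proof is correct and is essentially the paper's argument: you apply Lemma~\ref{L:1+unit} with $u=v=\pm p^k$ and off-diagonal entry $3$ (or $5$ when $p=3$), which is not congruent to a unit modulo $p^k+1$; the paper simply fixes $k=4$ for $p\neq 3$ and takes $u=v=-27$ for $p=3$. One harmless slip: the solutions of $p^k-p^j=2$ with $0\le j<k$ are $(p,k,j)=(3,1,0)$ and $(2,2,1)$, not $(2,1,0)$ (indeed $3\equiv -2 \pmod 5$, so $k=2$ fails for $p=2$), but since you handle $p=2$ and $p=3$ separately, your conclusion for $p\ge 5$ stands.
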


\begin{proof}
If $p \neq 3$, then $(p^4 + 1)^2 \equiv (1+ 1)^2 \equiv 1 \pmod{3}$, so there is a matrix in $\SL_2(\Z)$ of the form
	 $$ A= \begin{pmatrix} p^4 + 1 & 3 \\ \star & p^4 + 1 \end{pmatrix} . $$
Modulo $p^4 + 1$, every unit in $\Z[1/p]$ is congruent to $\pm p^k$ with $0 \le k \le 3$, and therefore no unit is congruent to~$3$ (because $p^3 + 3 < p^4 + 1$). So the fact that $A$ is not a product of four elementaries follows from Lemma~\ref{L:1+unit}. 

For $p = 3$, let
	$$ A = \begin{pmatrix} -3^3 + 1 & 5 \\ 135 & -3^3 + 1 \end{pmatrix} . $$
Modulo $-3^3 + 1 = -26$, every unit in $\Z[1/3]$ is congruent to $\pm 3^k$ with $0 \le k \le 2$, and therefore no unit is congruent to~$5$ (because $3^2 + 5 < 3^3 - 1$), so we can again use Lemma~\ref{L:1+unit}.
\end{proof}

\section{Bounded elementary generation \texorpdfstring{of $\SL_2(\cO_S)$}{} and Artin's Primitive Root Conjecture}\label{S:Artin}

\subsection{Residues of units and bounded elementary generation.} The following observation goes back  at least to \cite{CW}. 

\begin{lemma}\label{L:A02}
Let $a , b \in \cO_S$ be such that $a\cO_S + b\cO_S = \cO_S$, and let $A \in \SL_2(\cO_S)$ be a matrix with the first row $(a , b)$. 
    \begin{enumerate}

    \item \label{L:A02-3elem}
    If $b \in \cO_S^{\times}$ then $A$ is a product of $\leq 3$ elementaries. 

    \item \label{L:A02-4elem}
    If $b \equiv w \pmod{a}$ for some unit $w \in \cO_S^{\times}$ then $A$ is a product of $\leq 4$ elementaries. 

    \item \label{L:A02-5elem}
    If the arithmetic progression $a + bt$ $(t \in \cO_S)$ contains an element $c$ such that $b \equiv w \pmod{c}$ for some unit $w \in \cO_S^{\times}$ then $A$ is a product of $\leq 5$ elementaries.
    \end{enumerate}
\end{lemma}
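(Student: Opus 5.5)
The plan is to reduce $A$ to the identity by column operations (right multiplication by elementaries) acting on the first row, together with one final observation. If a matrix in $\SL_2(\cO_S)$ has first row $(1,0)$, it lies in $U^-$, so it is a single elementary. More generally, a matrix with first row $(1,y)$ lies in $U^-U^+$, as computed in the proof of Lemma~\ref{corner1}, so it is a product of $\le 2$ elementaries. Thus if $k$ column operations transform the first row $(a,b)$ into $(1,y)$, then $A$ is a product of $\le k+2$ elementaries. Each column operation replaces $A$ by $A\,e$ with $e \in U^{\pm}$, so $A$ is recovered by multiplying back by inverses.

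For (\ref{L:A02-3elem}), with $b \in \cO_S^{\times}$ we apply $e_{21}(s)$ with $s = b^{-1}(1-a)$. This changes the first entry to $a + sb = 1$, giving the row $(1, b)$. Hence $A \in U^-U^+\cdot U^-$, a product of $\le 3$ elementaries.

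For (\ref{L:A02-4elem}), write $b = w + at$ with $w \in \cO_S^{\times}$. Right multiplication by $e_{12}(-t)$ turns $(a,b)$ into $(a,w)$. We are now in case (\ref{L:A02-3elem}), and the matrix is a product of $\le 3$ elementaries, so $A$ is a product of $\le 4$. One can alternatively invoke Lemma~\ref{corner1} directly, since $w$ divides $a-1$.

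For (\ref{L:A02-5elem}), let $c = a + bt$. A column operation of the form $e_{21}(t)$ turns $(a,b)$ into $(c,b)$. The new row is still unimodular, and by hypothesis $b \equiv w \pmod{c}$ with $w$ a unit. Applying (\ref{L:A02-4elem}) to the new matrix, with first row $(c,b)$, shows it is a product of $\le 4$ elementaries, so $A$ is a product of $\le 5$.

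The only point needing care is the bookkeeping of which side the elementaries act on and that the count adds up. Each step is a single right multiplication, so the bounds $3,4,5$ follow by induction from the $U^-U^+$ description. No genuine obstacle is expected.
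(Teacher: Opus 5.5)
Your proof is correct and follows essentially the same route as the paper: one column operation reduces (\ref{L:A02-5elem}) to (\ref{L:A02-4elem}), another reduces (\ref{L:A02-4elem}) to (\ref{L:A02-3elem}), and (\ref{L:A02-3elem}) is settled with three elementaries. The only cosmetic difference is in (\ref{L:A02-3elem}). The paper uses a row operation to put a $1$ in the $(2,2)$ entry and then clears the matrix with two more operations, whereas you use a column operation to put a $1$ in the $(1,1)$ entry and then invoke the $U^-U^+$ description from the proof of Lemma~\ref{corner1}.
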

\begin{proof}
If $b \in \cO_S^{\times}$, then by one elementary row operation $A$ can be transformed to a matrix of the form 
    $\begin{pmatrix} \star & \star \\ \star & 1   \end{pmatrix}$. 
The latter can be transformed by two elementary operations to the identity matrix, proving~(\ref{L:A02-3elem}). Now, if $A$ is as in~(\ref{L:A02-4elem}), then one elementary column operation transforms it to a matrix satisfying the assumption in~(\ref{L:A02-3elem}). Finally, if $A$ satisfies~(\ref{L:A02-5elem}) then one elementary column operation transforms it to a matrix as in~(\ref{L:A02-4elem}).
\end{proof}

For $c \in \cO_S$, $c \neq 0$, we let $\rho_c \colon \cO_S \to \cO_S/c\cO_S$ denote the canonical homomorphism (``reduction modulo $c$''). If $c$ belongs to the arithmetic progression $a + bt$ $(t \in \cO_S)$ as above, then $b\cO_S + c\cO_S = \cO_S$, and consequently $\rho_c(b) \in \left( \cO_S/c\cO_S\right)^{\times}$. So, the key assumption in Lemma \ref{L:A02}(\ref{L:A02-5elem}) would be satisfied if we knew that the restriction $\rho_c \colon \cO_S^{\times} \to ( \cO_S/c\cO_S )^{\times}$ is surjective. The question of the existence of a $c$ in the above arithmetic progression for which the surjectivity on units  holds is highly nontrivial and relates to the famous Artin's Primitive Root Conjecture (cf.~\cite{Hoo}). A modification of this conjecture that is most relevant to our discussion states that if $\alpha \in  \cO_S$ is not a proper power (in $K^{\times}$) then the set of prime ideals $\mathfrak{p}$ of $\cO_S$ such that for the reduction modulo $\mathfrak{p}$ map $\psi_{\mathfrak{p}} \colon \cO_S \to \cO_S/\mathfrak{p} \eqqcolon K(\mathfrak{p})$, the image $\psi_{\mathfrak{p}}(\alpha)$ is a generator of $K(\mathfrak{p})^{\times}$ (i.e., is a ``primitive root'' modulo $\mathfrak{p}$), is \underline{infinite} (in fact, has positive Dirichlet density). We note that if $| S | \geq 2$, then the unit group $\cO_S^\times$ is infinite but finitely generated, hence contains  a unit $\varepsilon \in \cO_S^{\times}$ of infinite order such that the cyclic group $\langle\varepsilon \rangle$ is a direct factor of $\cO_S^{\times}$ (a \emph{fundamental} unit), which is clearly \underline{not} a proper power. So, in this case Artin's Conjecture would yield infinitely many prime ideals $\mathfrak{p}$ such that the restriction $\psi_{\mathfrak{p}} \colon \langle \varepsilon \rangle \to K(\mathfrak{p})^{\times}$, and hence also the restriction 
$\psi_{\mathfrak{p}} \colon \cO_S^{\times} \to K(\mathfrak{p})^{\times}$, is surjective. While this is \emph{related} to the verification of the key assumption of Lemma~\ref{L:A02}(\ref{L:A02-5elem}), it is still insufficient for the proof of bounded elementary generation of $\SL_2(\cO_S)$, because we need $\mathfrak{p}$ to be in a certain arithmetic progression. A conditional proof of the bounded generation in characteristic zero in \cite{CW} was obtained along these lines through establishing the following result (in a slightly different language) related to Artin's Conjecture under the assumption of a certain form of the Generalized Riemann Hypothesis (GRH). 
\begin{thm}[cf.~{\cite[Theorem 2.6]{CW}}]\label{T:CW-RH}
 Let $K$ be a number field, $\mu$ be the number of roots of unity in $K$, and $S \subset V^K$ be a subset containing $V^K_{\infty}$ of size $\geq 2$  so that the group of units $\cO_S^{\times}$ is infinite. Assume that (GRH) holds as stated in \cite{CW}. 
Then for any $a , b \in \cO_S$ such that $a\cO_S + b\cO_S = \cO_S$, the arithmetic progression $a + bt$, $t \in \cO_S$, contains infinitely many  elements $\pi$ with the following properties: 
    \begin{enumerate} 
    
    \item $\mathfrak{p} \coloneqq \pi \cO_S$ is a prime ideal of $\cO_S$; 

    \item the index $[K(\mathfrak{p})^{\times} : \psi_{\mathfrak{p}}(\cO_S^{\times})]$ divides $\mu$. 

    \end{enumerate}
\end{thm}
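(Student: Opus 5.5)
We prove Theorem~\ref{T:CW-RH} along the lines of Hooley's conditional proof of Artin's conjecture, combined with Chebotarev in a ray class field to force $\pi \equiv a \pmod{b}$. The units are handled through a fixed finite generating set. Fix a fundamental unit $\varepsilon$, or more generally a finite set of generators $\varepsilon_1,\dots,\varepsilon_r$ of $\cO_S^{\times}$ modulo torsion. The index $[K(\mathfrak{p})^{\times}:\psi_{\mathfrak{p}}(\cO_S^{\times})]$ fails to divide $\mu$ exactly when there is a prime $\ell$ with $\ell^{\,v_\ell(\mu)+1}$ dividing this index. This in turn means that $\ell\mu_\ell = \mu_\ell^+$ divides $q-1$ and every unit is a $\mu_\ell^+$-th power modulo $\mathfrak{p}$, i.e.\ $\mathfrak{p}$ splits completely in $L_\ell := K(\zeta_{\mu_\ell^+}, \sqrt[\mu_\ell^+]{\cO_S^{\times}})$. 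So we must count primes $\mathfrak{p}$ in $K$ such that:
\begin{enumerate}
\item $\mathfrak{p}$ is generated by an element $\pi \equiv a \pmod{b}$;
\item $\mathfrak{p}$ splits completely in no $L_\ell$.
\end{enumerate}

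\textbf{Main term.} The first condition is a Chebotarev condition in the ray class field $\ray{b}$ of modulus $b$ (together with the sign conditions at real places in $S$, and taking account of the $S$-units). Concretely, $\mathfrak{p}$ must lie in the class of the ideal $a\cO_S$ in the $S$-ray class group mod $b$. Since units may be used to adjust $\pi$, the generator can be taken in the progression up to a unit factor, and $\cO_S^\times$ acts on residues. We must check that this is compatible: we want $\pi \in a + b\cO_S$ exactly, and the ray class condition gives $\pi = u(a+bt)$ for some unit $u$, which we absorb because $u\pi$ also generates $\mathfrak{p}$. For squarefree $n$ (products of primes $\ell$), let $L_n$ be the compositum of the $L_\ell$ for $\ell\mid n$. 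Inclusion–exclusion then gives the density
\[
\delta=\sum_{n}\mu(n)\,\frac{|C_n|}{[\ray{b}L_n:K]},
\]
where $C_n$ is the set of elements of $\Gal(\ray{b}L_n/K)$ restricting to the class of $a$ and to the identity on $L_n$.

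\textbf{Positivity.} We need $\delta>0$. This requires analysing the entanglement $\ray{b}\cap L_n$, which is contained in $\ray{b}\cap K(\zeta_{\mu^+_\ell\cdots},\ldots)$ and is abelian. The crucial point is that the index is only required to divide $\mu$, not to equal $1$. Using $\mu_\ell^+$ rather than $\ell$ ensures that the element acting as $a$ on $\ray{b}$ can be extended to something nontrivial on each $L_\ell$. Indeed, the abelian part of $L_\ell$ over $K$ is at most $K(\zeta_{\mu_\ell^+})$ together with Kummer pieces, which cannot all be forced trivial by a ray class condition, because $\zeta_{\mu_\ell^+}\notin K$. For large $\ell$ the fields are disjoint from $\ray{b}$ and the local factor is $1-1/[L_\ell:K]$, with $[L_\ell:K]\asymp \ell^{r+1}$, so the product converges to a positive value.

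\textbf{Analytic input.} Under GRH, the effective Chebotarev theorem (Lagarias–Odlyzko) handles $\ell\le \sqrt{x}/\log^2 x$. Medium $\ell$ are handled Hooley-style by the condition $\ell \mid q-1$ together with Brun–Titchmarsh. Large $\ell$ are handled by counting the primes dividing $\prod_{m\le x/\ell}(\varepsilon^m-1)$, which is a trivial bound.

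\textbf{Main obstacle.} The main difficulty is proving $\delta>0$ for every coprime pair $(a,b)$, i.e.\ controlling the finitely many small $\ell$ (those dividing $\mu b$ and the discriminant) where $\ray{b}$ and $L_\ell$ entangle. First I would try exhibiting, for each such $\ell$, an explicit automorphism that is nontrivial on $K(\zeta_{\mu_\ell^+})$ or on $K(\sqrt[\ell]{\varepsilon})$ and compatible with the class of $a$. Failing that, I would fall back on the flexibility that the index need only divide $\mu$.
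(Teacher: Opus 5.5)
This statement is quoted from \cite{CW} and is not proved in the paper. The paper's own unconditional analogue in positive characteristic (Propositions~\ref{P:4}, \ref{P:A11} and~\ref{P:A01}) follows the same two-step route you outline. First, class field theory turns ``$\pi\cO_S$ prime with $\pi\equiv a\pmod{b}$'' into a Frobenius condition in a finite abelian extension $F/K$. Then Lenstra's criterion \cite[Theorem~4.1]{Len}, which holds for number fields under GRH, applied with $W=\cO_S^{\times}$ and $m=\mu$, gives positive density. Your analytic outline is the standard Hooley argument. There is, however, a genuine gap: the step that actually carries the theorem, namely $\delta>0$ for \emph{every} coprime pair $(a,b)$, is left open (``First I would try \dots, failing that \dots''). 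Moreover, the justification you do give is wrong. The fact $\zeta_{\mu_\ell^+}\notin K$ proves nothing here: $K(\zeta_{\mu_\ell^+})$ and the Kummer pieces $K(\sqrt[\mu_\ell]{\varepsilon})$ are abelian over $K$. They can therefore lie inside the ray class field, where the congruence on $\pi$ may force $\sigma$ to act trivially on them. For example, if every place of $S$ splits completely in $K(\zeta_{\mu_\ell^+})$, $b\in\ell^k\cO_S$ with $k$ large, and $a=1$, then every prime in the progression has $\mu_\ell^+\mid |K(\mathfrak p)|-1$.

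The missing idea is non-abelianness. For a fundamental unit $w$, the maximal abelian subextension of $K(\sqrt[\mu_\ell^+]{w})/K$ is $K(\sqrt[\mu_\ell]{w})$, of degree $\mu_\ell<\mu_\ell^+$ \cite[Proposition~2.1, Corollary~2.4]{MRS}. Hence $\sqrt[\mu_\ell^+]{w}\in L_\ell$ lies in no abelian extension of $K$, so $L_\ell\not\subset F$ for \emph{every} $\ell$ and every abelian $F$, independently of $(a,b)$. This is what working with $\mu_\ell^+$ rather than $\ell$ really buys, not the absence of $\zeta_{\mu_\ell^+}$ from $K$.

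Even granting this, a prime-by-prime check (``for each such $\ell$, an explicit automorphism'') would not suffice. You need a single $\sigma$ that works for all $\ell$ simultaneously. The $L_\ell$ are entangled with each other as well as with $\ray{b}$; for instance $\sqrt{w}\in L_2$ can lie in $K(\zeta_{\ell'})\subset L_{\ell'}$. So your sum $\delta$ does not factor into independent local terms over the small $\ell$. Lenstra's theorem handles exactly this. Since $w\notin\roots_K(K^{\times})^{\ell}$ for all $\ell$, one has $h=1$. Positivity then reduces to the existence of some $\sigma\in\Gal(F/K)$ in the prescribed class that is nontrivial on each $L_\ell\subset F$, and by the previous observation there are no such $L_\ell$. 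You must either invoke this criterion or reproduce its Galois-theoretic analysis; as written, the positivity of $\delta$ is not established.
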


Since the required form of (GRH) remains unproven, this result (as well as the ensuing proof of bounded generation of $\SL_2(\cO_S)$) is still conditional. As we already mentioned in the introduction, an unconditional proof of bounded elementary generation of $\SL_2(\cO_S)$ in characteristic zero was first obtained in unpublished work of D.~Carter, G.~Keller, and E.~Paige by a different method, cf.\ \cite{Mor}. (In Sections \ref{S:setting}-\ref{S:NumberFieldPf} we will refine methods from \cite{MRS} to prove Theorem \ref{T:1} in this case.) However, an analog of Theorem~\ref{T:CW-RH} in positive characteristic  (see Proposition \ref{P:A11}) can be derived from an unconditional result of  H.W.~Lenstra \cite{Len} related to Artin's Conjecture, which in turn relies on the work of H.~Bilharz \cite{Bil} who established Artin's Conjecture in the function field case.

\subsection{A result of Lenstra \texorpdfstring{\cite{Len}}{}} First, let us fix some additional notations assuming that $K$ is a global field of characteristic $p > 0$. We let $\mathbb{F}$ denote the algebraic closure of the prime field $\mathbb{F}_p$ in $K$, so $\mathbb{F}$ is a finite field. (Then $K$ is isomorphic to the function field $\mathbb{F}(C)$ of a geometrically integral smooth projective curve $C$ defined over $\mathbb{F}$, and $K$ can also be realized as a finite extension of the field $\mathbb{F}(T)$ of rational functions.) Letting $q = |\mathbb{F}|$, we have that $\mu \coloneqq q - 1$ is the number of roots of unity in $K$. 

As above, let $S \subset V^K$ be a finite subset, and let $W \subset \cO_S^{\times}$ be a subgroup (automatically finitely generated). Given a finite Galois extension $F/K$ with Galois group $G = \Gal(F/K)$, fix a conjugacy class $C$ in $G$ and an integer $m > 0$ prime to $p$.

\begin{definition}\label{D:1}
Let $M = M(F/K, C, W, m)$  denote the set of prime ideals $\mathfrak{p}$ of $\cO_S$ that are unramified in $F$ and satisfy 
    \begin{itemize}
    
    \item $(\mathfrak{p} , F/K) = C$, 
    
    \item the index $[K(\mathfrak{p})^{\times} : \psi_{\mathfrak{p}}(W)]$ divides $m$. 
    
    \end{itemize}
\end{definition}

Given $m > 0$ prime to $p$ and a prime $\ell \neq p$, we let 
$$
L_{W, m, \ell} = K(\zeta_{m_{\ell}^+}, \mellproot{W})
\text{\quad where\quad}
\mellproot{W} = \bigset{ \mellproot{w} }{ w \in W }
.
$$

We refer the reader to \cite[Ch. VII, \S13]{Neukirch} and \cite[\S4]{Ros2}  for the definition of the \emph{Dirichlet density} $d(M)$ of a set of primes $M$ of $K$; in fact, all we need in the current paper is that finite sets have Dirichlet density zero. We will now state a result of Lenstra \cite{Len}. 

\begin{thm}[{\cite[Theorem 4.1]{Len}}]\label{T:Lenstra2} 
Let $h$ be the product of those primes $\ell \neq p$ for which $W \subset ({K^{\times}})^{m_{\ell}^+}$. Then the following conditions are equivalent: 

    \begin{enumerate}

    \item \label{T:Lenstra2-density}
    $d(M) > 0$, 

    \item \label{T:Lenstra2-sigma}
    there exists $\sigma \in \Gal(F(\zeta_h)/K)$ whose restrictions satisfy the conditions  
$$ \text{$\sigma \vert F \in C$ \qquad  and \qquad  $\sigma \vert L_{W, m, \ell} \neq \mathrm{id}_{L_{W, m, \ell}}$ \  for every \  $\ell$ \  with \  $L_{W, m, \ell} \subset F(\zeta_h)$.}
$$
    \end{enumerate}
\end{thm}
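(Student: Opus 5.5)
The proof would follow the Hooley--Bilharz strategy: recast membership in $M$ as a Chebotarev-type condition in an infinite family of Kummer extensions, then evaluate the density by a sieve.

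\emph{Local criterion.} Fix $\mathfrak{p}$ unramified in $F$ and in all the relevant Kummer fields, and put $N = |K(\mathfrak{p})| - 1$, which is prime to $p$. Since $K(\mathfrak{p})^{\times}$ is cyclic, the index $[K(\mathfrak{p})^{\times} : \psi_{\mathfrak{p}}(W)]$ divides $m$ if and only if, for every prime $\ell \neq p$, the number $m_{\ell}^+$ does not divide this index. In turn, $m_{\ell}^+$ divides the index exactly when $m_{\ell}^+ \mid N$ and every element of $\psi_{\mathfrak{p}}(W)$ is an $m_{\ell}^+$-th power in $K(\mathfrak{p})$. That is exactly the statement that $\mathfrak{p}$ splits completely in $L_{W,m,\ell}$. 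Hence $M$ consists of the $\mathfrak{p}$ with $(\mathfrak{p}, F/K) = C$ whose Frobenius is nontrivial on every $L_{W,m,\ell}$.

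\emph{Sieve.} For a squarefree $n$ prime to $p$, let $L_n$ denote the compositum of $F$ with the fields $L_{W,m,\ell}$, $\ell \mid n$. By Chebotarev, the set of $\mathfrak{p}$ with Frobenius in $C$ that split completely in every $L_{W,m,\ell}$, $\ell \mid n$, has density $c_n$, computable from $\Gal(L_n/K)$. Inclusion--exclusion formally gives
$$ d(M) = \sum_n \mu(n)\, c_n . $$
To justify this I would truncate at $\ell \le y$ and bound the tail $\ell > y$. In the function field case this is unconditional: a prime $\mathfrak{p}$ of degree $d$ can split in $L_{W,m,\ell}$ only if $\ell \mid q^d - 1$. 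Moreover, the effective Chebotarev theorem, which follows from Weil's Riemann hypothesis for curves, gives error terms uniform in $[L_n:K]$. With these two inputs, Bilharz's argument bounds the contribution of large $\ell$ by $o(1)$, and in fact shows that the primes with such $\ell$ have density zero.

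\emph{Positivity.} This is the main obstacle, namely the equivalence with condition~(\ref{T:Lenstra2-sigma}). Let $r$ be the rank of $W$ modulo torsion. For $\ell \nmid h$ with $\ell$ large, Kummer theory shows that $[L_{W,m,\ell} : K]$ grows like $\ell^{r+1}$ when $r \ge 1$, or like $\ell$ via the cyclotomic part when $r = 0$. It also shows that these fields are linearly disjoint from one another and from $F(\zeta_h)$ away from a finite set of primes $\ell$. The density therefore factors as a finite local factor times a convergent product $\prod (1 - 1/[L_{W,m,\ell}:K])$ with all factors positive. The delicate point is the finitely many $\ell$ with $L_{W,m,\ell} \subset F(\zeta_h)$; in particular this includes $\ell \mid h$, since then $W \subset (K^{\times})^{m_{\ell}^+}$ and $L_{W,m,\ell} = K(\zeta_{m_{\ell}^+})$ is cyclotomic and entangled with $F$. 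For these, the avoidance condition is a condition on the Frobenius in $\Gal(F(\zeta_h)/K)$ itself, and this is exactly the existence of $\sigma$ in condition~(\ref{T:Lenstra2-sigma}). The remaining finitely many $\ell$ not contained in $F(\zeta_h)$ each cut out a proper subfield. Using a large enough finite compositum $E$ and counting elements of $\Gal(E/K)$ that restrict to $\sigma$ and avoid each proper sub-condition, I would show these cost only a positive factor. This yields (\ref{T:Lenstra2-sigma}) $\Rightarrow$ (\ref{T:Lenstra2-density}). Conversely, any $\mathfrak{p} \in M$ unramified in $F(\zeta_h)$ gives $\sigma = (\mathfrak{p}, F(\zeta_h)/K)$ satisfying condition~(\ref{T:Lenstra2-sigma}), since by the local criterion its Frobenius is nontrivial on every $L_{W,m,\ell}$ and hence on those contained in $F(\zeta_h)$; so positive density gives (\ref{T:Lenstra2-sigma}).
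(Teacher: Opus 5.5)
The paper does not prove this statement; it quotes it from Lenstra. So I am judging your sketch on its own merits.

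Your local criterion is correct, and so is the easy direction (1)$\Rightarrow$(2). The plan of justifying the sieve along the lines of Bilharz is also reasonable, with one caveat: for Dirichlet density you should count primes of each fixed degree, with the Frobenius on constants fixed, and then average over the degree. The gap is in (2)$\Rightarrow$(1). There you transplant the number-field positivity argument, and its key input is false in characteristic $p$.

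Here $K(\zeta_n)=K\mathbb{F}_{q^{f}}$ with $f=\mathrm{ord}_n(q)$ is a constant field extension. So the cyclotomic parts of all the $L_{W,m,\ell}$ lie in the single procyclic tower $K\bar{\mathbb{F}}/K$. This has three consequences:
\begin{itemize}
\item $[L_{W,m,\ell}:K]$ is $\mathrm{ord}_{m_\ell^+}(q)$ times a Kummer degree, not $\asymp \ell^{r+1}$.
\item For infinitely many pairs $\ell\neq\ell'$ the orders of $q$ share a factor $g>1$ (e.g.\ primitive prime divisors of $q^{2k}-1$). Then $L_{W,m,\ell}\cap L_{W,m,\ell'}\supseteq K\mathbb{F}_{q^{g}}\neq K$.
\item $F(\zeta_h)$ meets infinitely many $L_{W,m,\ell}$ as soon as it contains a nontrivial constant extension.
\end{itemize}
So there is no cofinite set of $\ell$ on which linear disjointness holds. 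Nor is $d(M)$ a finite factor times $\prod_\ell(1-1/[L_{W,m,\ell}:K])$.

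Concretely, take $K=\mathbb{F}_q(x)$, $S=\{0,\infty\}$, $W=\langle x\rangle$, $m=1$, $F=K$. Then $M$ consists of the primes generated by primitive polynomials. The proportion of degree-$d$ primes lying in $M$ is $(1+o(1))\,\varphi(q^d-1)/q^d$, which tends to $0$ along $d=k!$. Thus $d(M)>0$ only because these ratios are averaged over $d$, not because of an Euler product with positive factors.

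For the same reason, your claim that the remaining $\ell$ ``each cut out a proper subfield'' and so ``cost only a positive factor'' assumes what has to be proved. Conditions that can each be met separately need not be met simultaneously, since a coset can be a finite union of proper subgroups. The real content of the theorem is that in characteristic $p$ no obstruction survives beyond $F(\zeta_h)$, and that needs a function-field-specific independence argument. One way to supply it:
\begin{itemize}
\item Split each $L_{W,m,\ell}$ into its constant part and its geometric part $L_{W,m,\ell}\bar{\mathbb{F}}/K\bar{\mathbb{F}}$.
\item The geometric parts are Kummer extensions of $\ell$-power degree. Hence they are independent of one another and, for almost all $\ell$, of $F\bar{\mathbb{F}}$.
\item The constant-field conditions are congruence conditions on $\deg\mathfrak{p}$. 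In $\hat{\mathbb{Z}}$, a coset of an open subgroup that is covered by finitely many open subgroups lies in one of them.
\item Count primes of fixed degree using the geometric independence, then average over the degree.
\end{itemize}

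Two smaller slips. First, the statement implicitly needs $W\not\subset\mathbb{F}^\times$: if $r=0$, every $\ell\nmid q-1$ with $\ell\neq p$ divides $h$, so $h$ is not an integer. Second, for $\ell\mid h$ with $\ell\mid m$, the field $L_{W,m,\ell}=K(\zeta_{m_\ell^+})$ need not lie in $F(\zeta_h)$.
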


We now specialize this to the situation where  $| S | \geq 2$, hence the group of units $\cO_S^{\times}$ is infinite. 

\begin{prop}\label{P:A11}
Let $F/K$ be a finite abelian extension, and fix an automorphism $\sigma \in \Gal(F/K)$. If $|S| \ge 2$, then $\cO_S$ has infinitely many prime ideals $\mathfrak{p}$ that satisfy the following two conditions: 

    \begin{enumerate}
    
    \item $(\mathfrak{p} , F/K) = \sigma$, 

    \item $[{K(\mathfrak{p})}^{\times} : \psi_{\mathfrak{p}}(\cO_S^{\times})]$ divides $\mu = q-1$.

    \end{enumerate}
\end{prop}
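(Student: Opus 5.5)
The plan is to apply Lenstra's Theorem~\ref{T:Lenstra2} with
$$ C = \{\sigma\}, \qquad m = \mu = q-1, \qquad W = \langle \varepsilon \rangle , $$
where $\varepsilon \in \cO_S^{\times}$ is a fundamental unit, which exists because $|S| \ge 2$. Since $F/K$ is abelian, the conjugacy class of $\sigma$ is $\{\sigma\}$. If $\mathfrak{p} \in M(F/K, \{\sigma\}, W, \mu)$, then $(\mathfrak{p}, F/K) = \sigma$. Moreover $\psi_{\mathfrak{p}}(W) \subset \psi_{\mathfrak{p}}(\cO_S^{\times})$, so the index $[K(\mathfrak{p})^{\times} : \psi_{\mathfrak{p}}(\cO_S^{\times})]$ divides $[K(\mathfrak{p})^{\times} : \psi_{\mathfrak{p}}(W)]$, which divides $\mu$. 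Finite sets have Dirichlet density zero, so it suffices to show $d(M) > 0$, i.e.\ to verify condition~(\ref{T:Lenstra2-sigma}) of Theorem~\ref{T:Lenstra2}.

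\textbf{Step 1: $h = 1$.} We show that $\varepsilon$ is not an $\ell$-th power in $K^{\times}$ for any prime $\ell \neq p$. Suppose $\varepsilon = x^{\ell}$ with $x \in K^{\times}$. Then $\ell\, v(x) = v(\varepsilon) = 0$ for all $v \notin S$, so $x \in \cO_S^{\times}$. Write $\cO_S^{\times} = \langle \varepsilon \rangle \times U'$ and $x = \varepsilon^k y$ with $y \in U'$. Then $\varepsilon = \varepsilon^{k\ell} y^{\ell}$, and comparing components gives $k\ell = 1$, which is impossible. Hence $W \not\subset (K^{\times})^{m_\ell^+}$ for every $\ell$, so $h = 1$ and $F(\zeta_h) = F$. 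Condition~(\ref{T:Lenstra2-sigma}) then reduces to the following: $\sigma \vert F = \sigma$ (automatic), and $\sigma$ acts nontrivially on every $L_{W,\mu,\ell}$ contained in $F$.

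\textbf{Step 2: no $L_{W,\mu,\ell}$ lies in $F$.} This is the main step. Put $n = \mu_{\ell}^+ = \ell\cdot(q-1)_\ell$, so that
$$ L_{W,\mu,\ell} = K(\zeta_n, \sqrt[n]{\varepsilon}). $$
Since $F/K$ is abelian, it is enough to show that this field is not abelian over $K$. Because $n \nmid q-1$, we have $\zeta_n \notin K$. The roots of unity of order dividing $n$ in $K$ form $\roots_{w}$ with $w = \gcd(n, q-1) = (q-1)_\ell$.

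By Schinzel's theorem (valid for $n$ prime to $\Char K$), the splitting field of $x^n - \varepsilon$ is abelian over $K$ if and only if $\varepsilon^{w} \in (K^{\times})^{n}$. Suppose $\varepsilon^{w} = y^{n} = y^{\ell w}$. The valuation argument of Step~1 shows $y \in \cO_S^{\times}$. Comparing $\langle \varepsilon\rangle$-components in $\langle \varepsilon \rangle \times U'$ then gives $w = \ell w k$, again a contradiction.

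If one prefers to avoid citing Schinzel, I would argue directly instead. Suppose $\tau \in \Gal(K(\zeta_n)/K)$ is nontrivial, so $\tau(\zeta_n) = \zeta_n^{a}$ with $a \not\equiv 1 \pmod{\ell}$-part. Take $\alpha = \sqrt[n]{\varepsilon}$ and a lift $\tilde\tau$. Commutativity of $\tilde\tau$ with the Kummer automorphism $\alpha \mapsto \zeta_n\alpha$ forces $\zeta_n^{a} = \zeta_n$ on the orbit, unless the Kummer group is small, and the latter is excluded by the non-power property of $\varepsilon$.

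\textbf{Conclusion.} With Step~2, the condition on the $L_{W,\mu,\ell}$ is vacuous. Theorem~\ref{T:Lenstra2} then yields $d(M) > 0$, so $M$ is infinite, which gives the proposition. I expect the only real obstacle to be Step~2, namely the nonabelianness of $K(\zeta_n, \sqrt[n]{\varepsilon})$. Particular care is needed when $\ell = 2$, where $\roots_{2^e} \subset K$ interacts with $-1$, but the criterion $\varepsilon^{w} \notin (K^{\times})^{n}$ handles this uniformly.
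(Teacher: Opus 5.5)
Your proof is correct and follows the paper's route. You apply Lenstra's Theorem~\ref{T:Lenstra2} with $m=\mu$, get $h=1$ from a fundamental unit, and show $L_{W,\mu,\ell}\not\subset F$ because it is not abelian over $K$, so condition~(\ref{T:Lenstra2-sigma}) holds vacuously. There are only two differences. You take $W=\langle\varepsilon\rangle$ instead of $\cO_S^{\times}$, which is harmless since the index for $\cO_S^{\times}$ divides the index for $W$. You also get non-abelianness from Schinzel's criterion rather than from \cite[Proposition~2.1, Corollary~2.4]{MRS}. The Schinzel argument is complete on its own, so you can drop the sketched ``direct'' alternative, which is too vague to count as a proof as written.
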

\begin{proof}
We will apply Theorem \ref{T:Lenstra2} with $W = \cO_S^{\times}$ and $m = \mu$. Since 
$W$ is infinite but finitely generated, there exists 
$w \in W$ of infinite order such that the cyclic group $\langle w \rangle$ is a direct factor of $W$ (i.e., $w$ is a \emph{fundamental} $S$-unit). Then, in particular, $w \notin \roots_K \cdot (K^{\times})^{\ell}$ for all primes $\ell \neq p$, and hence we have $h = 1$ and $F(\zeta_h) = F$ in the notations of Theorem \ref{T:Lenstra2}. Furthermore,  it follows from \cite[Proposition 2.1]{MRS} that $K( \mellroot{w})$ is the maximal abelian over $K$ subextension of $K \bigl( \mellproot{w} \bigr)$. Since 
$[K(\mellproot{w}) : K] = m_{\ell}^+$ and $[K(\mellroot{w}) : K] = m_{\ell}$ (cf.\ \cite[Corollary 2.4(i)]{MRS}) and $F/K$ is abelian, we obtain that $\mellproot{w} \notin F$. Since $\mellproot{w} \in L_{W, m, \ell}$, this implies $
L_{W, m, \ell} \not\subset F$ 
for all $\ell \neq p$. Thus, every $\sigma \in C$ satisfies condition~(\ref{T:Lenstra2-sigma}) of Theorem \ref{T:Lenstra2}, and therefore $M = M(F/K, C, W, m)$ has positive Dirichlet density, hence is infinite. On the other hand, every prime $\mathfrak{p} \in M$ satisfies the requirements of the proposition (cf.\ Definition \ref{D:1}).     
\end{proof}

\section{Proof of Theorem \ref{T:1} in positive characteristic}\label{S:Pos_char}

\subsection{Special primes in arithmetic progressions} First, we establish the following unconditional analog of Theorem \ref{T:CW-RH} in positive characteristic. 

\begin{prop}\label{P:A01} 
Let nonzero $a , b \in \cO_S$ be such that $a\cO_S + b\cO_S = \cO_S$. 
Furthermore, suppose that for each $v \in S$ we are given an element $x_v \in K_v^{\times}$ and an open subgroup $V_v \subset K_v^{\times}$ of finite index. 
Then the arithmetic progression $a + bt$, $t \in \cO_S$, contains infinitely many elements 
$\pi$ such that 

    \begin{enumerate}
    \item $\mathfrak{p} \coloneqq \pi \cO_S$ is a prime ideal of $\cO_S$;

    \item $[K(\mathfrak{p})^{\times} : \psi_{\mathfrak{p}}(\cO_S^{\times})]$ divides $\mu$;

    \item $\pi \in x_vV_v$ for all $v \in S$. 
    \end{enumerate}
\end{prop}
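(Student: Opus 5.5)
Our plan is to encode all three requirements as a single Frobenius condition in a suitable ray class field and then apply Proposition~\ref{P:A11}. The obstacle is that the ideal $\mathfrak{p}$ must be \emph{principal} with a generator lying in a prescribed coset $a + b\cO_S$ and in prescribed local cosets $x_vV_v$ at $v\in S$. This is exactly a class field theoretic condition. Consider the open subgroup of the idele group
$$
U \;=\; \prod_{v\in S} V_v \times \prod_{v\notin S,\ v(b)>0} \bigl(1+\mathcal{P}_v^{v(b)}\bigr) \times \prod_{\text{other } v\notin S} \cO_v^{\times}.
$$
The subgroup $K^\times U$ has finite index in the ideles because each $V_v$ has finite index in $K_v^\times$. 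Let $F$ be the corresponding finite abelian extension of $K$ (a ray class field of conductor dividing $b$, with the conditions at $S$ imposed by the $V_v$). Since $F/K$ is abelian, Proposition~\ref{P:A11} applies to $F$.

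Next we identify the target automorphism. Define the idele $x$ whose component is $x_v$ at $v\in S$ and $a$ at the places $v\notin S$ dividing $b$. Take $\sigma\in\Gal(F/K)$ to be the image of $x^{-1}$ (with the appropriate sign convention) under the Artin map. By the reciprocity law for ideles, a prime $\mathfrak{p}\notin S$ coprime to $b$ satisfies $(\mathfrak{p},F/K)=\sigma$ exactly when the idele with a uniformizer at $\mathfrak{p}$ and $1$ elsewhere lies in $x^{-1}K^\times U$. Unwinding this, there is $\pi\in K^\times$ such that:
\begin{itemize}
\item $v(\pi)=0$ for $v\notin S\cup\{\mathfrak{p}\}$ and $v_{\mathfrak{p}}(\pi)=1$, so $\pi\in\cO_S$ and $\pi\cO_S=\mathfrak{p}$;
\item $\pi\in x_vV_v$ for every $v\in S$;
\item $\pi\equiv a \pmod{\mathcal{P}_v^{v(b)}}$ at every $v\mid b$, which means $\pi\in a+b\cO_S$.
\end{itemize}
To make this unwinding correct, one has to track carefully the orientation of the Artin map and the normalization that the $\mathfrak{p}$-component is a uniformizer. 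We also have to check that $a$ is a unit at the places dividing $b$; this holds because $a\cO_S+b\cO_S=\cO_S$.

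Finally we apply Proposition~\ref{P:A11} to $F$ and $\sigma$. It produces infinitely many primes $\mathfrak{p}$ with $(\mathfrak{p},F/K)=\sigma$ and with $[K(\mathfrak{p})^\times:\psi_{\mathfrak{p}}(\cO_S^\times)]$ dividing $\mu$. Discarding the finitely many primes dividing $b$ or ramified in $F$, each remaining $\mathfrak{p}$ gives a generator $\pi$ with properties (1)--(3). Distinct primes give distinct $\pi$, so there are infinitely many.

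The main obstacle is the middle step: building the idelic (ray class) description so that one Frobenius class simultaneously forces principality, the congruence $\pi\equiv a \pmod{b}$, and the local conditions at $S$. The delicate points there are the sign and orientation conventions of the Artin map, and the fact that $K^\times\cap U$ need not be trivial. The latter is harmless, because we only need the existence of a suitable $\pi$, not its uniqueness.
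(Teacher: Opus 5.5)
Your proposal is correct and is essentially the paper's argument. The paper isolates your middle step as Proposition~\ref{P:4}, with the same open subgroup $\Omega=\prod_v V_v$ (taking $V_v=\{u\in\cO_v^\times : v(u-1)\ge v(b)\}$ off $S$), the same $\sigma=\theta_{F/K}(\bar a^{-1}\bar x^{-1})$ and the same unwinding, then combines it with Proposition~\ref{P:A11} exactly as you do; the only point to tighten is that finiteness of the index of $K^\times U$ in the ideles also needs finiteness of the class group of $\cO_S$ and of $[\cO_v^\times:1+\mathcal{P}_v^{v(b)}]$, not just the finite index of the $V_v$ for $v\in S$.
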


The proposition follows from Proposition \ref{P:A11} and the following statement that relates elements in an arithmetic progression to the Frobenius automorphisms in an appropriate abelian extension. (We note that this relation is classical and was used, for example, in the proof of Dirichlet's Theorem \cite[(A.12)]{BMS} and also in \cite{Q2}.)

\begin{prop}\label{P:4} 
Let nonzero $a , b \in \cO_S$ be such that 
\begin{align*}
a\cO_S + b\cO_S = \cO_S, 
\end{align*}
and set $S_b = \{ v \in V^K \setminus S \ | \ v(b) \neq 0 \}$. 
Furthermore, suppose that for each $v \in S$ we are given an element $x_v \in K_v^{\times}$ and an open subgroup $V_v \subset K_v^{\times}$ of finite index. Then there exist a finite abelian Galois extension $F/K$ and an automorphism $\sigma \in \Gal(F/K)$ such that any prime ideal $\mathfrak{p} \notin S_b$  of $\cO_S$ which  is unramified in $F$ and has the property that $(\mathfrak{p} , F/K) = \sigma$ is principal and has a generator $\pi$ satisfying $\pi \equiv a \pmod{b \cO_S}$ and $\pi \in x_vV_v$ for all $v \in S$. 
\end{prop}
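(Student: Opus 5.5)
This is the classical class-field-theoretic translation of arithmetic progressions into Frobenius conditions. We work with the idele group $J_K$ and construct an open subgroup of finite index containing $K^\times$. Its associated abelian extension $F$ will encode all the required conditions.

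First we define an open subgroup $U \subset J_K$. For $v\in S$, take the component $V_v$. For $v\in S_b$, take the principal congruence subgroup $U_v^{(n_v)} = 1+\mathcal{P}_v^{n_v}$ with $n_v = v(b)$; for $v\in S_b$ this is the open subgroup whose elements $y$ satisfy $y\equiv 1 \pmod{b\cO_v}$. For all other $v$, take $\cO_v^\times$. Set $U=\prod_v U_v$ (restricted product). Then $K^\times U$ is an open subgroup of finite index in $J_K$. In the function field case finite index needs a word, since $J_K/K^\times$ is not compact. Here we use that $U$ contains $\prod_{v\notin S}\cO_v^\times$ and that the $V_v$ ($v\in S$) have finite index. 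The quotient $J_K/K^\times \prod_{v\notin S}\cO_v^\times\prod_{v\in S}K_v^\times$ is the finite class group of $\cO_S$. The remaining piece is controlled by $\prod_{v\in S}K_v^\times/V_v$ and the finite groups $\cO_v^\times/U_v$, $v\in S_b$, so $[J_K:K^\times U]<\infty$. By the existence theorem of global class field theory (valid in both characteristics) there is a finite abelian $F/K$ with norm group $K^\times U$. Thus the Artin map gives $J_K/K^\times U\cong \Gal(F/K)$. The primes $\mathfrak p\notin S\cup S_b$ are unramified in $F$, since $U_{v_\mathfrak p}=\cO_{\mathfrak p}^\times$.

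Next we choose $\sigma$. Let $\mathbf{i}\in J_K$ be the idele with components $x_v^{-1}$ at $v\in S$, $a^{-1}$ at $v\in S_b$ (note $a\in\cO_v^\times$ there since $(a,b)=1$), and $1$ elsewhere. Take $\sigma$ to be its image under the Artin map, up to inverse depending on the normalization. Now let $\mathfrak p\notin S_b$ be unramified with $(\mathfrak p,F/K)=\sigma$. Let $\varpi$ be a uniformizer at $\mathfrak p$ and let $\mathbf{j}$ be the idele with $\varpi$ at $v_\mathfrak p$ and $1$ elsewhere. Then $\mathbf{j}$ maps to the Frobenius of $\mathfrak p$, so $\mathbf{j}\mathbf{i}^{-1}\in K^\times U$. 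Hence there is $\pi\in K^\times$ with $\pi\in \mathbf{j}\,\mathbf{i}^{-1}U$ componentwise, after adjusting inverses. Reading off the components gives the following:
\begin{itemize}
\item at $v\notin S\cup S_b\cup\{v_\mathfrak p\}$ we have $v(\pi)=0$, and $v_\mathfrak p(\pi)=1$, so $\pi\cO_S=\mathfrak p$ and $\mathfrak p$ is principal;
\item at $v\in S_b$ we have $\pi\in aU_v^{(v(b))}$, i.e.\ $\pi\equiv a \pmod{b\cO_v}$ for all such $v$, whence $\pi\equiv a\pmod{b\cO_S}$ (both are $S$-integers);
\item at $v\in S$ we have $\pi\in x_vV_v$.
\end{itemize}
The principal obstacle is bookkeeping: signs of exponents under the chosen normalization of the Artin map, and checking that $a\in\cO_v^\times$ for $v\in S_b$. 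The finite-index argument in positive characteristic is the only genuinely non-formal point. We handle it via finiteness of $\mathrm{Pic}(\cO_S)$ together with finiteness of $\cO_v^\times/U_v$ and $K_v^\times/V_v$.
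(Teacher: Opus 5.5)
Your proposal is correct and is essentially the paper's own proof. It uses the same open subgroup $\Omega=\prod_v V_v$ with $1+\mathcal{P}_v^{v(b)}$ at $v\in S_b$, the same finite-index argument via the finite class group of $\cO_S$, the same choice $\sigma=\theta_{F/K}(\bar a^{-1}\bar x^{-1})$, and the same componentwise reading of $\pi$ from $r\bar a\bar x\in\Omega K^\times$. Two small remarks: $U$ does not contain $\cO_v^\times$ for $v\in S_b$, but your next sentence already accounts for the finite quotients $\cO_v^\times/U_v$; and no inverse adjustment is needed, since $\theta(\mathbf{j})=\sigma=\theta(\mathbf{i})$ gives $\mathbf{j}\mathbf{i}^{-1}\in K^\times U$ directly.
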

\begin{proof}[Proof \rm
(cf. the proof of Dirichlet's Theorem in \cite{BMS})]
Let $J = J_K$ be the group of ideles of $K$. 
For $v \in V^K \setminus S$ set 
$$
V_v = \{\, u \in \cO_v^\times \ | \ v(u - 1) \geq v(b) \,\}.
$$
Clearly, $V_v = \cO_v^\times$ for all $v \in V^K \setminus S$ lying outside the finite set $S_b$, so the subgroup $\Omega = \prod_{v \in V^K} V_v$ is open in $J$. On the other hand, the quotient 
$$
J / \left( \prod_{v \in S} K_v^{\times} \times \prod_{v \in V^K \setminus S} \cO_v^\times   \right) K^{\times} 
$$
is isomorphic to the class group of $\cO_S$, hence is finite, and the index $[K_v^{\times} : V_v]$ is finite for all $v \in S$, so the index $[J : \Omega K^{\times}]$ is also finite. By the Existence Theorem of Global Class Field Theory \cite[Ch. VIII]{AT}, there exists a finite abelian extension $F/K$ satisfying $N_{F/K}(J_F)K^{\times} = \Omega K^{\times}$, and then the corresponding reciprocity map $\theta_{F/K} \colon J \to \Gal(F/K)$ is surjective with the kernel $\Omega K^{\times}$. Define two ideles $\bar{a} = (\bar{a}_v)$ and $\bar{x} = (\bar{x}_v)$ by: 
    $$
    \bar{a}_v = 
        \begin{cases}
           \ a & \text{if $v \in S_b$}, \\
           \ 1 & \text{otherwise}
          \end{cases}
    \qquad \text{and} \qquad
    \bar{x}_v = 
        \begin{cases}
            \ x_v   &  \text{if $v \in S$}, \\
            \ 1  & \text{otherwise} 
        . \end{cases}
    $$
Set $\sigma = \theta_{F/K}(\bar{a}^{-1} \bar{x}^{-1}) \in \Gal(F/K)$. Now, let $\mathfrak{p} \notin  S_b$ be a prime ideal of $\cO_S$ such that $(\mathfrak{p} , F/K) = \sigma$. Define an idele $r = (r_v)$ by $r_v = 1$ for all $v \neq v_{\mathfrak{p}}$ and $v_{\mathfrak{p}}(r_{\mathfrak{p}}) = 1$. Then 
$$
\theta_{F/K}(r) = (\mathfrak{p} , F/K) = \sigma = \theta_{F/K}(\bar{a}^{-1} \bar{x}^{-1}).
$$
So, $\bar{a} \bar{x} r \in \Omega K^{\times}$, and therefore one can write 
\begin{equation}\label{E:A007}
  r \bar{a} \omega = \bar{x}^{-1} \pi \ \ \text{with} \ \ \omega \in \Omega \ \ \text{and} \ \ \pi \in K^{\times}.   
\end{equation}
Since $v(a) = 0$ for all $v \in S_b$, we conclude from (\ref{E:A007}) that $v(\pi) = 0$ for all $v \in V^K \setminus (S \cup \{ v_{\mathfrak{p}} \})$ and $v_{\mathfrak{p}}(\pi) = 1$, which means that $\pi \in \cO_S$ and $\mathfrak{p} = \pi\cO_S$. Considering in (\ref{E:A007}) the components corresponding to $v \in S$, we obtain that $\pi \in x_v V_v$ for all $v \in S$. Finally, for any $v \in S_b$ we have that $\pi = a u$ with $u$ satisfying $v(u - 1) \geq v(b)$. Then 
$$
v(\pi - a) = v(a(u - 1)) \geq v(b), 
$$
implying that $\pi \equiv a \pmod{b\cO_S}$.     
\end{proof}

\subsection{Key lemma} We will now apply Proposition \ref{P:A01} to prove the following statement that will enable us to transform a general matrix $A \in \SL_2(\cO_S)$ into a matrix that can be handled using  Lemma \ref{L:A02}.   
\begin{lemma}\label{L:A71}
Any matrix $A \in \SL_2(\cO_S)$ can be transformed by at most three elementary operations into a matrix with the first row $(a , b^{\mu})$ where $a , b \in \cO_S$ and the ideal $\mathfrak{p}\coloneqq a\cO_S$ is prime with the index $[K(\mathfrak{p})^{\times} : \psi_{\mathfrak{p}}(\cO_S^{\times})]$ dividing $\mu$.    
\end{lemma}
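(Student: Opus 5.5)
We would prove Lemma~\ref{L:A71} in three elementary column operations, each time adjusting one entry of the first row $(a_0,b_0)$ of $A$. The plan is to first arrange that the second entry is congruent to a $\mu$-th power modulo a suitable modulus. Then Proposition~\ref{P:A01} produces a prime first entry, and a final operation turns the second entry into an actual $\mu$-th power.

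\textbf{Step 1.} If $b_0=0$ then $a_0$ is a unit, and one column operation makes $b_0$ nonzero. So assume $a_0,b_0$ are both nonzero, possibly after an initial operation; we will need to fit this into the count of three. First we apply Proposition~\ref{P:A01} to the progression $b_0+a_0t$, with the roles of the two entries swapped. This gives a column operation $(a_0,b_0)\mapsto(a_0,b_1)$ with $b_1=b_0+a_0t$ generating a prime ideal $\mathfrak{q}=b_1\cO_S$. We use the local conditions at $v\in S$ to make $b_1$ lie in prescribed cosets of $(K_v^\times)^\mu$; this is what later forces the reciprocity law to help.

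\textbf{Step 2.} Next we apply Proposition~\ref{P:A01} to the progression $a_0+b_1s$. This yields $a=a_0+b_1s$ with $\mathfrak{p}=a\cO_S$ prime and $[K(\mathfrak{p})^\times:\psi_{\mathfrak{p}}(\cO_S^\times)]\mid\mu$, again with prescribed local behaviour at $v\in S$. It remains to make the second entry a $\mu$-th power modulo $a$. The third operation $(a,b_1)\mapsto(a,b_1+ar)$ can then realise any residue class of $b_1$ modulo $a$, giving $b_1+ar=b^\mu$ once $b_1\equiv c^\mu\pmod{\mathfrak{p}}$ for some $c$. Since $\mathfrak{p}$ is prime and $\gcd(a,b_1)=1$, we choose $b$ as a lift of $c$ and $r=(c^\mu-b_1)/a$; here the lift $b$ of $c$ must satisfy $b^\mu\equiv b_1 \pmod a$. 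So the whole task reduces to ensuring that $b_1$ is a $\mu$-th power residue modulo $\mathfrak{p}$.

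\textbf{Step 3 (main obstacle).} By property~(\ref{powres-mpower}) of the power residue symbol, it suffices to have $\powres{a,b_1}{\mathfrak{p}}{\mu}=1$. We compute this using the reciprocity law~(\ref{powres-reciprocity}). For $v\notin S\cup\{\mathfrak{p},\mathfrak{q}\}$ both entries are units, so the symbol is trivial by~(\ref{powres-cong}). At $\mathfrak{q}$, the symbol is determined by $a\equiv a_0 \pmod{\mathfrak{q}}$ through~(\ref{powres-cong}), and it equals $\powres{a_0,b_1}{\mathfrak{q}}{\mu}$. At $v\in S$, the symbol depends only on the classes of $a$ and $b_1$ modulo $(K_v^\times)^\mu$, since these subgroups are open of finite index because $\mu$ is prime to $p$; those classes were prescribed. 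Hence
\[
\powres{a,b_1}{\mathfrak{p}}{\mu}^{-1}=\powres{a_0,b_1}{\mathfrak{q}}{\mu}\prod_{v\in S}\powres{x_v,y_v}{v}{\mu}.
\]
The difficulty is choosing the local data so that this product is $1$. In Step 2 we are free to choose the classes $x_v$ of $a$ at $v\in S$ after $b_1$ is known. Since $|S|\ge 1$ and $K_v^\times/(K_v^\times)^\mu$ pairs nondegenerately into $\roots_\mu$ (as in~(\ref{SS:PRS-PrimRoot})), we can pick $x_v$ at a single $v\in S$ to cancel the known factor $\powres{a_0,b_1}{\mathfrak{q}}{\mu}$. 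This is possible provided $b_1$ is not a $\mu$-th power locally at that $v$, which we guarantee in Step 1 by requiring $b_1$ to lie in a coset on which $\powres{\cdot,b_1}{v}{\mu}$ is surjective onto $\roots_\mu$. With this choice the symbol is trivial, and Steps 1--3 use exactly three elementary operations.
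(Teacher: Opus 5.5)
Your proposal is correct and follows the paper's proof essentially step for step. First choose the prime second entry $b_1$ in a coset $y(K_{v_0}^{\times})^{\mu}$ at a fixed $v_0\in S$, where $y$ comes from property (v), so that the degree-$\mu$ symbol against $b_1$ at $v_0$ is onto $\roots_\mu$. Next use Proposition~\ref{P:A01} to pick the prime first entry $a$ (with index dividing $\mu$) in a class at $v_0$ that cancels the symbol at $\mathfrak q$ via the reciprocity law, and finish with one more operation using property (iii).

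The only cosmetic difference is at the places $v\in S\setminus\{v_0\}$. The paper kills those factors by also requiring $b_1\in(K_v^{\times})^{\mu}$ there, whereas you prescribe the classes of $a$ there; both work. Your unresolved worry about $b_0=0$ costs no extra operation, since then $a_0$ is a unit and the progression $b_0+a_0t$ is already all of $\cO_S$.
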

\begin{proof}
We will imitate the argument used to prove \cite[Lemma 3]{CK} and \cite[Lemma 3.1]{Trost} but will also incorporate our Proposition \ref{P:A01}. Fix $v_0 \in S$, and let $(a_1 , b_1)$ be the first row of $A$. We will assume (as we may) that both $a_1$ and $b_1$ are nonzero. 

\refstepcounter{stepholder} 
\begin{step} \label{L:A71Pf-b2}
   The arithmetic progression $b_1 + a_1t$ $(t \in \cO_S)$ contains an element $b_2$ such that for some nonzero $c \in \cO_S$ we have
   \begin{enumerate}
    \item the ideal $\mathfrak{q} = b_2\cO_S$ is prime;

    \item \label{L:A71Pf-b2-power}
    $b_2 \in (K_v^{\times})^{\mu}$ for all $v \in S \setminus \{ v_0 \}$; 

    \item \label{L:A71Pf-product}
    $\powres{a_1 , b_2}{\mathfrak{q}}{\mu} \cdot \powres{c , b_2}{v_0}{\mu} = 1$. 
    \end{enumerate}
\end{step}
Indeed, by property \ref{SS:PRS}(\ref{SS:PRS-PrimRoot}) there exist nonzero $x , y \in \cO_S$ for which $\displaystyle \zeta \coloneqq \powres{x , y}{v_0}{\mu}$ is a primitive $\mu$-th root of unity. Then by Dirichlet's Theorem \cite[(A.10)]{BMS} or by our Proposition \ref{P:A01}, the arithmetic progression $b_1 + a_1 t$ $(t \in \cO_S)$ contains $b_2$ such that $\mathfrak{q} = b_2 \cO_S$ is a prime ideal and 
$$ \text{$b_2 \in (K_v^{\times})^{\mu}$ for all $v \in S \setminus \{ v_0 \}$,   
\ and \ $b_2 \in y \, (K_{v_0}^{\times})^{\mu}$.}   
$$
(We note that since $\mu$ is prime to $p$, the subgroup of $\mu$-th powers $(K_v^{\times})^{\mu}$ is a finite index open subgroup in $K_v^{\times}$ for any valuation $v$ of $K$.) Then 
$$
\powres{x , b_2}{v_0}{\mu}  = \powres{x , y}{v_0}{\mu}  = \zeta.  
$$
Pick a positive integer $k$ so that $\powres{a_1 , b_2}{\mathfrak{q}}{\mu}^{-1}   = \zeta^k$. Then for $c = x^k$ we have 
$$
\powres{a_1 , b_2}{\mathfrak{q}}{\mu}  \cdot \powres{c , b_2}{v_0}{\mu} = \zeta^{-k} \cdot \zeta^k = 1, 
$$
as required.

\begin{step} \label{L:A71Pf-a2}
The arithmetic progression $a_1 + b_2t$ $(t \in \cO_S)$ contains an element $a_2$ such that 
   \begin{enumerate}
    \item \label{L:A71Pf-prime}
    the ideal $\mathfrak{p} = a_2\cO_S$ is prime; 

    \item \label{L:A71Pf-coset}
    $a_2 \in c \, (K_{v_0}^{\times})^{\mu}$;  

    \item \label{L:A71Pf-index}
    the index $[K(\mathfrak{p})^{\times} : \psi_{\mathfrak{p}}(\cO_S^{\times})]$ divides $\mu$; 

    \item \label{L:A71Pf-triv}
    $\powres{a_2 , b_2}{\mathfrak{p}}{\mu} = 1$. 

    \end{enumerate}
\end{step}
Indeed, by Proposition \ref{P:A01} the arithmetic progression contains an element $a_2$ that satisfies the conditions (\ref{L:A71Pf-prime})--(\ref{L:A71Pf-index}). So, let us verify condition~(\ref{L:A71Pf-triv}). First, let us show that $\powres{a_2 , b_2}{v}{\mu} = 1$ for all $v \in V^K$   different from $v_0, v_{\mathfrak{p}}$ 
and $v_{\mathfrak{q}}$.  If $v = v_{\mathfrak{r}}$ for a prime ideal $\mathfrak{r}$ of $\cO_S$ different from $\mathfrak{p}$ and $\mathfrak{q}$  then the required conclusion follows from the fact that both $a_2 , b_2 \in \cO_{\mathfrak{r}}^{\times}$ (cf.\ property 
\ref{SS:PRS}(\ref{powres-cong})). On the other hand, if $v \in S \setminus \{ v_0 \}$, then according to the condition~(\ref{L:A71Pf-b2-power}) in Step~\ref{L:A71Pf-b2}, we have $b_2 \in (K_v^{\times})^{\mu}$,  and the required fact is immediate (cf. property \ref{SS:PRS}(\ref{powres-power})).

Now, it follows from the Power Reciprocity Law (cf.\ property \ref{SS:PRS}(\ref{powres-reciprocity})) that 
\begin{equation}\label{E:A001}
\powres{a_2 , b_2}{\mathfrak{p}}{\mu} \cdot \powres{a_2 , b_2}{\mathfrak{q}}{\mu} \cdot \powres{a_2 , b_2}{v_0}{\mu} = 1.   
\end{equation} 
Due to condition (\ref{L:A71Pf-coset}) in Step~\ref{L:A71Pf-a2}, we have that $\powres{a_2 , b_2}{v_0}{\mu} = \powres{c , b_2}{v_0}{\mu}$. Furthermore, since $\mathfrak{q} = b_2\cO_S$ and $a_2 \equiv a_1 \pmod{\mathfrak{q}}$,  we have 
$\powres{a_2 , b_2}{\mathfrak{q}}{\mu} = \powres{a_1 , b_2}{\mathfrak{q}}{\mu}$ (cf. property 
\ref{SS:PRS}(\ref{powres-cong})). So, comparing Equation~(\ref{E:A001}) with (\ref{L:A71Pf-product}) in Step \ref{L:A71Pf-b2}, we obtain~(\ref{L:A71Pf-triv}).

\begin{step}
It follows from Steps \ref{L:A71Pf-b2} and \ref{L:A71Pf-a2} that any matrix $A \in \SL_2(\cO_S)$ with the first row $(a_1 , b_1)$ can be transformed by two elementary operations into a matrix $A'$ with the first row $(a_2 , b_2)$ such that for $a = a_2$ the following conditions are satisfied: 

   \begin{enumerate}

    \item $\mathfrak{p} = a \cO_S$ is a prime ideal; 

    \item the index $[K(\mathfrak{p})^{\times} : \psi_{\mathfrak{p}}(\cO_S^{\times})]$ divides $\mu$; 

    \item \label{L:A71Pf-powres=1}
    $\powres{a_2 , b_2}{\mathfrak{p}}{\mu} = 1$.

    \end{enumerate}
\end{step}
Since $a$ is a generator of $\mathfrak{p}$ and $b_2$ is a unit modulo $\mathfrak{p}$, condition (\ref{L:A71Pf-powres=1}) implies that $b_2 \equiv b^{\mu} \pmod{\mathfrak{p}}$ for some $b \in \cO_S$ (cf.\ property \ref{SS:PRS}(\ref{powres-mpower})). Then applying one more elementary operation to $A'$, we will obtain a matrix with the first row $(a , b^{\mu})$, as required. 
\end{proof}

\subsection{Conclusion of the proof} Let $A \in \SL_2(\cO_S)$. By Lemma \ref{L:A71}, using at most three elementary operations, we can transform $A$ into a matrix $A'$ with the first row $(a , b^{\mu})$ as in the lemma. Since for $\mathfrak{p} = a\cO_S$, the index $[K(\mathfrak{p})^{\times} : \psi_{\mathfrak{p}}(\cO_S^{\times})]$ divides $\mu$, there exists $w \in \cO_S^{\times}$ such that $b^{\mu} \equiv w \pmod{a}$, and then $A'$ is a product of $\leq 4$ elementaries by Lemma \ref{L:A02}(\ref{L:A02-4elem}). Then $A$ is a product of $\leq 7$ elementaries, as required.

\section{Setting the stage for the proof of Theorem \ref{T:1} in characteristic zero} \label{S:setting}

In the remainder of this article, $K$ will be a global field of \emph{characteristic zero} (in other words, a \emph{number field}), and hence $p$ will no longer be used to denote the characteristic but will rather denote a rational prime.

\subsection{Additional notations and definitions}

While we will continue to use the standard notations introduced in subsection \ref{SS:Notations}, 
consideration of the number field case will require some additional notations and notions that are consistent with those used in \cite{MRS}. Here is a list of most of the commonly used notations: 

\begin{itemize}
    \item $\cO_K$ will denote the ring of algebraic integers in $K$ (in other words, the integral closure of $\Z$ in $K$).

    \item For $a \in K^{\times}$, we set $V(a) = \{\, v \in V^K_f \ | \ v(a) \neq 0 \,\}$. (This is a finite set.) 

    \item For an ideal $I$ of a commutative ring $R$ such that the quotient $R/I$ is finite, we let $\phi(I)$ denote the order of the multiplicative group $(R/I)^{\times}$. 

     \item $\row(\cO_S) = \{\, (a , b) \ | \ a\cO_S + b\cO_S = \cO_S \,\}$. (We note that $\row(\cO_S)$ is precisely the set of  first rows of all matrices $A \in \SL_2(\cO_S)$.)
    
\end{itemize}

\medbreak\noindent 
Given two matrices $A , A'$ over a commutative ring $R$ of the same size, we write  $A \reduce{m} A'$ if $A'$ can be obtained from $A$ by $\leq m$ elementary row and column operations over $R$. We note that if $A \in \SL_2(R)$ has the first row $(a , b)$ and $(a , b) \reduce{m} (1 , 0)$ then $A$ is a product of $\leq m + 1$ elementary matrices. Thus, to prove  Theorem \ref{T:1}, it is enough to show that for any $(a , b) \in \row(\cO_S)$ we have $(a , b) \reduce{6} (1 , 0)$. Furthermore, if $(a , b)$ and $(a' , b) \in R^2$ and $a' \equiv a \pmod{b}$ then $(a , b) \reduce{1} (a' , b)$, and symmetrically, if $(a , b') \in R^2$ and $b' \equiv b \pmod{aR}$ then $(a , b) \reduce{1} (a , b')$. These and other simple properties of the relations $\reduce{m}$ can be found in \cite[Lemma 4.1]{MRS}. 

\medbreak 

Let $\mathfrak{p}$ be a nonzero prime ideal of $\cO_K$, and $p$ be the corresponding rational prime. We say that $\mathfrak{p}$ is $\Q$-\emph{split} if $K_{\mathfrak{p}} = \Q_p$ and $p > 2$. 

Given a prime $p > 2$, we consider the group of $p$-adic units $\mathbb{U}_p = \Z_p^{\times}$ and its filtration by congruence subgroups
$$
\text{$\mathbb{U}_p^{(i)} = 1 + p^i \Z_p$ for $i \geq 1$.} 
$$
It is well-known that 
$$
\mathbb{U}_p = \roots_{\Q_p} \times \mathbb{U}_p^{(1)}. 
$$
Furthermore, for any unit $u \in \mathbb{U}_p \setminus \roots_{\Q_p}$, the closure $\overline{\langle u \rangle}$ of the cyclic subgroup generated by $u$ is of the form 
$$
\overline{\langle u \rangle} = \roots' \times \mathbb{U}_p^{(\ell)}
$$
for some subgroup $\roots' \subset \roots_{\Q_p}$ and some integer $\ell \geq 1$ which will be called the $p$-\emph{level} of $u$ and denoted $\ell_{\!p}(u)$. 
\begin{definition}\label{D:level}
Let $\mathfrak{p}$ be a $\Q$-split prime of $\cO_K$, and $p$ be the corresponding rational prime. Then the unit group $\cO_{\mathfrak{p}}^{\times}$ can be naturally identified with $\mathbb{U}_p$, and for $u \in \cO_{\mathfrak{p}}^{\times}$ we define the $\mathfrak{p}$-level $\ell_{\!\mathfrak{p}}(u)$ of $u$ to be the $p$-level of the element of $\mathbb{U}_p$ corresponding to $u$ under the above identification.  
\end{definition}

\subsection{A few lemmas}\label{SS:3lemmas}

\begin{lemma}[{\cite[Lemma 3.1]{MRS}}] \label{MRS3.1}
Let $\mathfrak{p}$ be a $\Q$-split nonzero prime ideal in $\cO_S$, and let $c \in \cO_S$. Then:
	\begin{enumerate}[label=\upshape{(\alph*)}, ref=\alph*]
	\item \label{MRS3.1-cyclic}
	$(\cO_S / \mathfrak{p}^n)^\times$ is cyclic for every $n \ge 1$;
	and
	\item \label{MRS3.1-gen}
	if $c \pmod{\mathfrak{p}^2}$ generates $(\cO_S / \mathfrak{p}^2)^\times$, then $c \pmod{\mathfrak{p}^n}$ generates $(\cO_S / \mathfrak{p}^n)^\times$ for every $n \ge 2$.
	\end{enumerate}
\end{lemma}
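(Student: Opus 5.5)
Since $\mathfrak{p}$ is $\Q$-split, I will transfer the problem to $\Z_p$ with $p$ odd and use the classical structure of $\Z_p^{\times}$. Let $p$ be the rational prime under $\mathfrak{p}$. The localization $\cO_S \to \cO_{\mathfrak{p}}$ induces isomorphisms $\cO_S/\mathfrak{p}^n \cong \cO_{\mathfrak{p}}/\mathcal{P}_{\mathfrak{p}}^n$. Because $K_{\mathfrak{p}} = \Q_p$, we have $\cO_{\mathfrak{p}} = \Z_p$ and $\mathcal{P}_{\mathfrak{p}} = p\Z_p$, so $(\cO_S/\mathfrak{p}^n)^\times \cong (\Z/p^n\Z)^\times$. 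Both statements then reduce to standard facts about $(\Z/p^n\Z)^\times$ for odd $p$, and I will prove those via the decomposition $\mathbb{U}_p = \roots_{\Q_p} \times \mathbb{U}_p^{(1)}$ recalled above.

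For (\ref{MRS3.1-cyclic}), reduction modulo $p^n$ gives a surjection $\mathbb{U}_p \to (\Z/p^n\Z)^\times$ with kernel $\mathbb{U}_p^{(n)}$. Hence
$$
(\Z/p^n\Z)^\times \cong \roots_{\Q_p} \times \mathbb{U}_p^{(1)}/\mathbb{U}_p^{(n)}.
$$
Here $\roots_{\Q_p}$ is cyclic of order $p-1$, since it maps isomorphically onto $\mathbb{F}_p^\times$. It therefore suffices to show that $\mathbb{U}_p^{(1)}/\mathbb{U}_p^{(n)}$ is cyclic of order $p^{n-1}$, generated by $1+p$. For odd $p$ the binomial expansion gives, for $x \in \Z_p$,
$$
(1+px)^p \equiv 1 + p^2 x \pmod{p^3}.
$$
By induction, $(1+p)^{p^{k}} \in \mathbb{U}_p^{(k+1)} \setminus \mathbb{U}_p^{(k+2)}$. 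So $1+p$ has order exactly $p^{n-1}$ in a group of order $p^{n-1}$. Since the two factors have coprime orders $p-1$ and $p^{n-1}$, their product is cyclic.

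For (\ref{MRS3.1-gen}), let $c$ generate $(\cO_S/\mathfrak{p}^2)^\times \cong (\Z/p^2)^\times$, and view $c \in \Z_p^\times$ with $c = \omega u$, where $\omega \in \roots_{\Q_p}$ and $u \in \mathbb{U}_p^{(1)}$. Being a generator modulo $p^2$ means the following:
\begin{itemize}
\item $\omega$ generates $\roots_{\Q_p}$, i.e.\ $c \bmod p$ generates $\mathbb{F}_p^\times$;
\item $u \notin \mathbb{U}_p^{(2)}$, i.e.\ the image of $u$ in $\mathbb{U}_p^{(1)}/\mathbb{U}_p^{(2)} \cong \Z/p$ is nontrivial.
\end{itemize}
Write $u = 1 + px$ with $x$ a $p$-adic unit. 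The same $p$-th power computation shows that $u^{p^{k}}$ lies in level exactly $k+1$. So $u$ has order exactly $p^{n-1}$ modulo $\mathbb{U}_p^{(n)}$ and generates $\mathbb{U}_p^{(1)}/\mathbb{U}_p^{(n)}$. By coprimality of orders, $c = \omega u$ then generates the product group for every $n \ge 2$.

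The only real obstacle is the congruence $(1+px)^p \equiv 1 + p^2 x \pmod{p^3}$, which is where $p>2$ is used (the term $\binom{p}{2}p^2x^2$ must be divisible by $p^3$). Everything else is bookkeeping between $\cO_S/\mathfrak{p}^n$ and $\Z/p^n$.
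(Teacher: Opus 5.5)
Your proof is correct and follows essentially the intended route: the paper gives no argument of its own beyond citing \cite[Lemma~3.1]{MRS}, and the point there is that $K_{\mathfrak{p}} = \Q_p$ gives $\cO_S/\mathfrak{p}^n \cong \Z/p^n\Z$ with $p$ odd, after which (a) and (b) are the classical facts about $(\Z/p^n\Z)^\times$ that you prove directly. One small point to tighten: your induction on levels (in both parts) needs $(1+p^j x)^p \equiv 1 + p^{j+1}x \pmod{p^{j+2}}$ for every $j \ge 1$, not only the case $j=1$ you display; this follows from the same binomial expansion, and $p>2$ is needed only when $j=1$.
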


\begin{lemma}[{\cite[Lemma 4.5]{MRS}}]\label{L:A101}
Let $\mathfrak{p}$ be a nonzero principal $\Q$-split prime ideal of $\cO_S$ with a generator $\pi$, and let $u \in \cO_S^{\times}$ be a unit of infinite order. Set $\ell = \ell_{\!\mathfrak{p}}(u)$, and let $\lambda$ and $m$ be integers satisfying $\lambda \mid \phi(\mathfrak{p})$ and $m \equiv 0 \pmod{\phi(\mathfrak{p}^{\ell}) / \lambda}$. Given an integer $\delta > 0$ dividing $\lambda$ and an element $b \in \cO_S$ which is prime to $\pi$ and is a $\delta$-th power modulo $\mathfrak{p}$, if $\lambda/\delta$ divides the order of the image of $u$ in $\cO_S/\mathfrak{p}$, then for every integer $t >  0$ there exists an integer $n$ such that 
$$
(\pi^t , b^m) \reduce{1} (\pi^t , u^n). 
$$
\end{lemma}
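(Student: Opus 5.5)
The statement says that $b^m$ and some power of $u$ agree modulo $\pi^t$. Indeed, $(\pi^t , b^m) \reduce{1} (\pi^t , u^n)$ holds as soon as $u^n \equiv b^m \pmod{\pi^t\cO_S}$. So the plan is to show that $b^m \pmod{\mathfrak{p}^t}$ lies in the cyclic subgroup generated by the image of $u$ in $(\cO_S/\mathfrak{p}^t)^\times$.

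I would work in $\cO_{\mathfrak{p}}^\times \cong \mathbb{U}_p = \roots_{\Q_p}\times \mathbb{U}_p^{(1)}$, where $\roots_{\Q_p}$ is cyclic of order $p-1 = \phi(\mathfrak{p})$ and $\mathbb{U}_p^{(1)}\cong \Z_p$. By definition of the level, $\overline{\langle u\rangle} = \roots' \times \mathbb{U}_p^{(\ell)}$, and $\roots'$ is the image of $\overline{\langle u\rangle}$ in the residue field. So $|\roots'|$ is the order $d$ of $u$ modulo $\mathfrak{p}$, and the hypothesis says $\lambda/\delta \mid d$. It then suffices to show $b^m \in \roots' \times \mathbb{U}_p^{(\ell)}$. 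That closed subgroup is open, since it contains $1+p^\ell\Z_p$, so for every $t$ its image modulo $\mathfrak{p}^t$ equals the image of $\langle u\rangle$. This gives the required integer $n$.

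To check that $b^m$ lies in this subgroup, write $b = \zeta \cdot x$ with $\zeta \in \roots_{\Q_p}$ and $x\in \mathbb{U}_p^{(1)}$. We have $\phi(\mathfrak{p}^\ell) = (p-1)p^{\ell-1}$ and $m \equiv 0 \pmod{(p-1)p^{\ell-1}/\lambda}$. Since $\lambda \mid p-1$, this gives $p^{\ell-1}\mid m$. Hence $x^m \in (\mathbb{U}_p^{(1)})^{p^{\ell-1}} = \mathbb{U}_p^{(\ell)}$ (here $p>2$ is used). For the torsion part, $b$ is a $\delta$-th power modulo $\mathfrak{p}$, so $\zeta \in \roots_{\Q_p}^{\delta}$, the subgroup of order $(p-1)/\delta$. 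Also $(p-1)/\lambda$ divides $m$, so $\zeta^m$ lies in the subgroup of order dividing $\frac{p-1}{\delta}\big/\gcd\bigl(\frac{p-1}{\delta}, m\bigr)$. Since $\frac{p-1}{\lambda}\mid m$ and $\frac{p-1}{\lambda}\mid\frac{p-1}{\delta}$, this order divides $\lambda/\delta$. In a cyclic group the subgroup of order dividing $\lambda/\delta$ is contained in the subgroup $\roots'$ of order $d$, because $\lambda/\delta \mid d$. So $\zeta^m\in\roots'$.

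The main step needing care is the torsion part: matching the divisibility conditions and using cyclicity of $\roots_{\Q_p}$. The pro-$p$ part only needs the standard fact $(1+p\Z_p)^{p^{k}} = 1+p^{k+1}\Z_p$ for odd $p$. Alternatively one could phrase everything via Lemma \ref{MRS3.1}(\ref{MRS3.1-cyclic}), using cyclicity of $(\cO_S/\mathfrak{p}^t)^\times$ and comparing orders, but the $p$-adic decomposition seems cleanest.
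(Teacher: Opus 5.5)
Your proof is correct and complete. You reduce to showing $b^m\in\overline{\langle u\rangle}=\roots'\times\mathbb{U}_p^{(\ell)}$, where $|\roots'|$ is the order $d$ of $u$ modulo $\mathfrak{p}$. The pro-$p$ factor is then handled by $p^{\ell-1}\mid m$. The torsion factor is handled by $(\zeta^m)^{\lambda/\delta}=1$ together with $\lambda/\delta\mid d$ in the cyclic group $\roots_{\Q_p}$.

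The paper gives no proof of its own here; it imports the lemma from \cite[Lemma 4.5]{MRS}. Your argument via the $p$-adic description of $\overline{\langle u\rangle}$ is the natural one, since the notion of level is set up for exactly this purpose.

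One small point of justification should be fixed. Every element of $\overline{\langle u\rangle}$ is congruent modulo $\mathfrak{p}^t$ to a power of $u$ because $\langle u\rangle$ is dense in its closure and the fibres of reduction modulo $p^t$ are cosets of the open subgroup $1+p^t\Z_p$. The openness of $\overline{\langle u\rangle}$ itself, which is the reason you give, plays no role.
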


\begin{lemma}\label{L:A102} 
Every $b \in \cO_S$ can be written in the form $\beta s^{-1}$ with $\beta \in \cO_K$ and $s \in \cO_S^{\times} \cap \cO_K$. If $| S | \geq 2$ then $\cO_S$ has a~fundamental unit $u \in \cO_S^{\times} \cap \cO_K$. 
\end{lemma}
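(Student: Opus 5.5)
The plan has two parts, both resting on the finiteness of the class group of $\cO_K$ and on finite generation of $S$-unit groups.

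For the first assertion, let $b \in \cO_S$. Since $\cO_S$ is a localization of $\cO_K$, write $b = \beta_0/d$ with $\beta_0 \in \cO_K$ and $0 \neq d \in \cO_K$. The denominators can only involve primes in $S$. To make this precise, consider the finite set $V(b) \cap \{\text{finite primes in } S\}$ of primes $\mathfrak{p}_v$ of $\cO_K$ with $v \in S \cap V^K_f$ and $v(b) < 0$.

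Let $h$ be the class number of $\cO_K$. For each finite $v \in S$, the ideal $\mathfrak{p}_v^h$ is principal, say $\mathfrak{p}_v^h = s_v \cO_K$ with $s_v \in \cO_K$. Then $s_v$ has $w(s_v) = 0$ for every finite $w \notin S$, and hence $s_v \in \cO_S^{\times} \cap \cO_K$. Put $s = \prod_{v} s_v^{N}$ over the finite $v \in S$ with $v(b) < 0$, choosing $N$ large enough that $v(s) = hN \geq -v(b)$ for each such $v$.

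Then $\beta := bs$ satisfies $w(\beta) \geq 0$ for all finite $w$. For $w \notin S$ this holds because $b \in \cO_S$ and $s$ is a $w$-unit. For $w \in S$ finite, it holds by the choice of $N$ when $w(b) < 0$, and trivially otherwise since $w(s) \geq 0$. Therefore $\beta \in \cO_K$ and $b = \beta s^{-1}$. If $S$ has no finite places, we simply take $s = 1$.

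For the second assertion, take any fundamental unit $u_0$ of $\cO_S^{\times}$, so that $\cO_S^{\times} = \langle u_0\rangle \times W$. By the first part, $u_0 = \beta s^{-1}$ with $\beta \in \cO_K$ and $s \in \cO_S^{\times} \cap \cO_K$. Then $\beta = u_0 s$ is an $S$-unit lying in $\cO_K$, but the map $u_0 \mapsto \beta$ need not preserve being fundamental. A cleaner route is needed.

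The subgroup $\cO_S^{\times}\cap\cO_K$ has finite index in $\cO_S^{\times}$? That is false in general: for example, $1/p$ is not an integer. I would therefore argue differently. The key observation is that the elements $s_v$ constructed above, together with $\cO_K^{\times}$, generate a subgroup $E = \cO_S^{\times} \cap \cO_K$, a monoid really, whose group closure is of finite index in $\cO_S^{\times}$. The homomorphism $\cO_S^\times \to \Z^{S_f}$, $x \mapsto (v(x))_v$, has finite cokernel on $E$, and $E$ consists exactly of the elements whose image is nonnegative.

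Choose a splitting of $\cO_S^{\times}$ modulo torsion as $\Z^r$, and consider the subgroup $H$ spanned by $E$ over $\Z$, which is all of $\cO_S^\times$ since every element is a quotient of elements of $E$. A fundamental unit corresponds to a primitive vector of the lattice $\Z^r$ (modulo torsion). The image of $E$ in this lattice is the set of lattice points lying in a closed cone defined by the inequalities $v(x) \geq 0$, $v \in S_f$. This cone has nonempty interior: $\prod_v s_v$ lies in its interior if $S_f \neq \varnothing$, and if $S_f = \varnothing$ then $E$ is everything.

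The main step is to show that this full-dimensional rational cone contains a primitive lattice vector. That is standard: take an interior lattice point $x$ and a primitive vector $e$ with $x + ke$ still in the cone for suitable $k$, or more directly take $Nx + y$ for a primitive $y$ chosen with $\gcd$ considerations. Concretely, complete $e_1$ to a basis and note that $Nx + e_1$ lies in the cone for $N \gg 0$ and can be made primitive by choosing the coordinates appropriately. Lifting such a primitive vector, and multiplying by a torsion element if needed (roots of unity lie in $\cO_K$), gives a fundamental unit in $\cO_S^\times\cap\cO_K$.

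I expect the only real obstacle to be this primitive-vector-in-a-cone argument. The rest is routine class-group bookkeeping.
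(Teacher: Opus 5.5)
Your proof is correct. The first part is essentially the paper's: the paper clears denominators with a single principal power $\mathfrak{s}^d = s\cO_K$ of $\mathfrak{s} = \prod_{v \in S \setminus V^K_\infty} \mathfrak{p}_v$, while you use $\prod_v s_v^N$ with $s_v\cO_K = \mathfrak{p}_v^h$. Same idea.

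For the second assertion your route is genuinely different. The paper takes the least $r>0$ with $\mathfrak{s}^r$ principal and normalizes a generator $u$ by a root of unity so that it lies in the free part $A \simeq \Z^{|S|-1}$ of $\cO_S^{\times}$. It then notes that $u = x^k$ with $k \ge 2$ would give $x\cO_K = \mathfrak{s}^{r/k}$, contradicting minimality. That is a two-line argument producing one explicit fundamental unit. You instead observe that, modulo torsion, $\cO_S^{\times} \cap \cO_K$ is exactly the set of lattice points of the rational cone $\{x : v(x) \ge 0 \text{ for all } v \in S \cap V^K_f\}$ in $\cO_S^{\times}$ modulo torsion $\simeq \Z^{|S|-1}$. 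You also observe that fundamental units are exactly the lifts of primitive vectors. This is more conceptual. It also treats the case $S = V^K_\infty$ explicitly, which the paper's write-up leaves implicit: there $\mathfrak{s} = \cO_K$ and the minimality argument is vacuous, though the claim is trivial since $\cO_S = \cO_K$.

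The step you single out as the main obstacle is in fact immediate. Take any nonzero lattice point $x$ of the cone (e.g.\ the image of some $s_v$, or any element of infinite order if $S \cap V^K_f = \varnothing$), and let $d$ be the gcd of its coordinates. Then $x/d$ is primitive and still lies in the cone, because the cone is stable under positive scaling. So you need neither full-dimensionality nor the $Nx + e_1$ device. That device, as stated, need not produce a primitive vector, e.g.\ when $x \in \Z e_1$.

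The abandoned detours in the middle can simply be deleted. This includes the sentence suggesting that the $s_v$ together with $\cO_K^{\times}$ generate $E$. That is not true in general: $E$ is only a monoid and can contain elements generating $\mathfrak{p}_v$ itself when $h>1$. All your argument actually uses is that membership in $E$ is decided by the signs of $v(x)$ for $v \in S \cap V^K_f$, and that torsion does not affect these.
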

\begin{proof}
For the first assertion, there is nothing to prove if $S = V^K_{\infty}$. Otherwise, we define an ideal $\mathfrak{s}$ of $\cO_K$ by 
$$
\mathfrak{s} = \prod_{v \in S \setminus V^K_{\infty}} \mathfrak{p}_v
$$
where $\mathfrak{p}_v$ is the prime ideal of $\cO_K$ corresponding to $v \in V^K_f$. Using the finiteness of the class number of $\cO_K$, we can choose $d > 0$ so that $b\mathfrak{s}^d \subset \cO_K$ and $\mathfrak{s}^d$ is principal. If $\mathfrak{s}^d = s\cO_K$, then $\beta \coloneqq bs \in \cO_K$, and our first assertion follows. 

For the second assertion, we recall that by Dirichlet's $S$-Unit Theorem we have an isomorphism $\cO_S^{\times} \simeq \roots_K \times A$, where $A \simeq \Z^{| S | - 1}$.
Now, we let $r$ be the minimal positive integer for which  the ideal $\mathfrak{s}^r$ is principal. Clearly, every generator of $\mathfrak{s}^r$ lies in $\cO_S^{\times} \cap \cO_K$, and multiplying by a root of unity we can choose a generator $u$ that lies in $A$. The minimality of $r$ implies that $u$ is a nondivisible element of $A$, and therefore is a fundamental unit since $A \simeq \Z^{| S | - 1}$. 
\end{proof}

\begin{lemma}\label{L:A103} 
For a nonzero ideal $\mathfrak{a}$ of $\cO_S$ not containing $\mu = | \roots_K |$, the reduction map $\rho_{\mathfrak{a}} \colon  \cO_S \to \cO_S/\mathfrak{a}$ is injective on $\roots_K \subset \cO_S$, and consequently, $\mu$ divides $\phi(\mathfrak{a})$. Furthermore, given a nonzero $a \in \cO_S$ such that $V(a) \not\subset S \cup V(\mu)$, we have $\mu \notin a \cO_S$, hence the above assertions are true for $\mathfrak{a} = a\cO_S$. 
\end{lemma}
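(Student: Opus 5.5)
The plan is to prove injectivity of $\rho_{\mathfrak{a}}$ on $\roots_K$ first, and then read off both the divisibility $\mu \mid \phi(\mathfrak{a})$ and the ``furthermore'' clause from it.

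\textbf{Injectivity.} Since $\roots_K$ is a group and $\rho_{\mathfrak{a}}$ is multiplicative, it suffices to show that no $\zeta \in \roots_K$ with $\zeta \neq 1$ satisfies $\zeta \equiv 1 \pmod{\mathfrak{a}}$. I will use the standard identity
$$\prod_{i=1}^{\mu-1} (1 - \zeta_\mu^i) = \mu,$$
obtained by evaluating $(x^\mu - 1)/(x-1) = \prod_{i=1}^{\mu-1}(x - \zeta_\mu^i)$ at $x = 1$. Here $\zeta_\mu \in \roots_K$ generates the cyclic group $\roots_K$. If some $\zeta \neq 1$ had $1 - \zeta \in \mathfrak{a}$, then $1-\zeta$ would be one of the factors above, and all other factors lie in $\cO_K \subset \cO_S$. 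Hence $\mu \in \mathfrak{a}$, contradicting the hypothesis.

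\textbf{Divisibility.} It follows that $\rho_{\mathfrak{a}}(\roots_K)$ is a subgroup of order $\mu$ of the finite group $(\cO_S/\mathfrak{a})^\times$. By Lagrange's theorem, $\mu$ divides $\phi(\mathfrak{a})$. Finiteness of $\cO_S/\mathfrak{a}$ holds for any nonzero ideal of a Dedekind ring with finite residue fields.

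\textbf{The ``furthermore'' clause.} Suppose $\mu \in a\cO_S$, say $\mu = a c$ with $c \in \cO_S$. Pick $v \in V(a)$ with $v \notin S \cup V(\mu)$. Then $v$ corresponds to a prime of $\cO_S$, so $v(c) \ge 0$, and $v(\mu) = 0$ because $v \notin V(\mu)$. This gives $v(a) = v(\mu) - v(c) \le 0$, while $v(a) \neq 0$. So $v(a) < 0$, which is impossible since $a \in \cO_S$ and $v \notin S$. Hence $\mu \notin a\cO_S$, and the first part applies with $\mathfrak{a} = a\cO_S$. Here $a \neq 0$, so $\mathfrak{a}$ is a nonzero ideal.

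\textbf{Main obstacle.} There is no genuine obstacle. The only point needing care is the cyclotomic identity $\prod_{i=1}^{\mu-1}(1-\zeta_\mu^i) = \mu$, together with noting that its factors are integral, so that divisibility passes to $\mathfrak{a}$.
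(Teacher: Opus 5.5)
Your proof is correct and takes essentially the paper's route. The paper shows that $\zeta \equiv 1 \pmod{\mathfrak{a}}$ with $\zeta \neq 1$ forces $\mu = f(1) \equiv f(\zeta) = 0 \pmod{\mathfrak{a}}$ for $f(x) = (x^{\mu}-1)/(x-1)$, which is the same evaluation you express as the factorization $\mu = \prod_{i=1}^{\mu-1}(1-\zeta_\mu^i)$; your valuation argument for the ``furthermore'' clause correctly supplies the step the paper calls obvious.
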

\begin{proof}
If $\rho_{\mathfrak{a}}$ is not injective on $\roots_K$, then there there exists $\zeta \in \roots_K \setminus \{ 1 \}$ such that $\zeta \equiv 1 \pmod{\mathfrak{a}}$. Then $\zeta$ is a root of the polynomial $f(x) \coloneqq (x^{\mu} - 1)/(x - 1)$, and therefore 
$$
\mu = f(1) \equiv f(\zeta) \pmod{\mathfrak{a}} \equiv 0 \pmod{\mathfrak{a}}. 
$$
Thus, $\mu \in \mathfrak{a}$, a contradiction, proving the first part of the lemma. The second part is obvious.     
\end{proof}

\subsection{Ray class fields}\label{SS:Ray}

Let $\mathfrak{b} \subset \cO_K$ be a nonzero ideal with the prime factorization 
\begin{equation}\label{E:A303} 
\mathfrak{b} = \mathfrak{p}_1^{n_1} \cdots \mathfrak{p}_r^{n_r}. 
\end{equation}
For $i = 1, \ldots , r$ we set $v_i = v_{\mathfrak{p}_i}$, $\mathcal{P}_i = \mathcal{P}_{v_i}$, and $V(\mathfrak{b})= \{v_1, \ldots , v_r\}$. (We note that for a nonzero $b \in \cO_K$, we have $V(b\cO_K) = V(b)$ in our previous notations.)  Next, we define the following open subgroup of the group of ideles $J = J_K$: 
$$
R(\mathfrak{b}) = \prod_{v \in V^K_{\mathbb{R}}} K_v^{> 0} \times \prod_{v \in V^K_{\infty} \setminus V^K_{\mathbb{R}}} K_v^{\times} \times \prod_{i = 1}^r \left( 1 + \mathcal{P}_i^{n_i}  \right) \times \prod_{v \in V^K_f \setminus V(\mathfrak{b})} \cO_v^{\times},
$$
where $V^K_{\mathbb{R}}$ denotes the set of real valuations $v$ of $K$, and $K_v^{> 0}$ is the group of positive elements in $K_v = \mathbb{R}$.  It is easy to see that for $J(\infty) \coloneqq 
\prod_{v \in V^K_{\infty}} K_v^{\times} \times \prod_{v \in V^K_f} \cO_v^{\times}$, the index $[J(\infty) : R(\mathfrak{b})]$ is finite, hence the index $[J(\infty) \, K^{\times} : R(\mathfrak{b}) \, K^{\times}]$ is also finite (where $K^{\times}$ is the group of principal ideles). On the other hand, the quotient $J/J(\infty) \, K^{\times}$ is isomorphic to the class group of $\cO_K$, hence finite. Thus, $R(\mathfrak{b}) \, K^{\times}$ is an open finite-index subgroup of $J$. By global class field theory, there exists a unique finite abelian extension $L/K$ whose norm group $N_{L/K}(J_L) \, K^{\times}$ coincides with $R(\mathfrak{b}) \, K^{\times}$. 

\begin{definition}
The finite abelian extension of $K$ with the norm group $R(\mathfrak{b}) \, K^{\times}$ is called the (narrow) \emph{ray class field} corresponding to $\mathfrak{b}$ and is denoted $\ray{\mathfrak{b}}$. 
\end{definition}

Then the reciprocity map $\alpha_{\ray{\mathfrak{b}}/K} \colon J \to \Gal \bigl( \ray{\mathfrak{b}}/K \bigr)$ is a continuous surjective group homomorphism with kernel $R(\mathfrak{b}) \, K^{\times}$ yielding thereby the fundamental isomorphism of global class field theory $J/ R(\mathfrak{b}) \, K^{\times} \simeq \Gal \bigl( \ray{\mathfrak{b}}/K \bigr)$. 

As in \cite[\S 3]{MRS}, for $c \in K^{\times}$ we set
$$
\theta_{\mathfrak{b}}(c) = \alpha_{\ray{\mathfrak{b}}/K}\bigr( \mathbf{j}_{\mathfrak{b}}(c) \bigr)^{-1} \in \Gal \bigl( \ray{\mathfrak{b}}/K \bigr)
$$
where $\mathbf{j}_{\mathfrak{b}}(c) \in J$ has the following components:
$$
\mathbf{j}_{\mathfrak{b}}(c)_v = 
    \begin{cases}
    \ c, & v \in V(\mathfrak{b}), \\ 
    \ 1 , & v \notin V(\mathfrak{b}). 
    \end{cases}
$$
Clearly, $\theta_{\mathfrak{b}} \colon K^{\times} \to \Gal \bigl( \ray{\mathfrak{b}}/K \bigr)$ is a group homomorphism.

\section{Some results from \texorpdfstring{\cite{MRS}}{Morgan-Rapinchuk-Sury}} \label{S:MRS}

The first result provides the existence of $\Q$-\emph{split} primes in arithmetic progressions.

\begin{thm}[{cf. \cite[Theorem 3.3]{MRS}}] \label{MRS3.3}
Let $\mathfrak{b}$ be a nonzero ideal of $\cO_K$ and $a \in \cO_K$ be a nonzero element which is relatively prime to $\mathfrak{b}$ \textup(i.e., $a\cO_K + \mathfrak{b} = \cO_K$\textup). Then there exist infinitely many principal $\Q$-split prime ideals $\mathfrak{p}$ of $\cO_K$ that have a generator $\pi$ satisfying $\pi \equiv a \pmod{\mathfrak{b}}$ and $\pi > 0$ in all real completions of $K$. 
\end{thm}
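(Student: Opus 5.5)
We will obtain the required primes by Chebotarev's Density Theorem applied to a compositum of the ray class field $\ray{\mathfrak{b}}$ and an auxiliary Galois extension that detects $\Q$-splitness. Throughout we use the correspondence, recalled in Subsection~\ref{SS:Ray}, between Frobenius classes in $\ray{\mathfrak{b}}$ and residues of generators modulo $\mathfrak{b}$. Let $\sigma = \theta_{\mathfrak{b}}(a)^{-1}$, or more precisely the element $\alpha_{\ray{\mathfrak{b}}/K}(\mathbf{j}_{\mathfrak{b}}(a))^{-1}$, arranged so that it has the following property. If $\mathfrak{p}\nmid\mathfrak{b}$ is a prime with $(\mathfrak{p},\ray{\mathfrak{b}}/K)=\sigma$, then $\mathfrak{p}$ is principal and has a generator $\pi$ with $\pi \equiv a \pmod{\mathfrak{b}}$ and $\pi$ totally positive. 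This is exactly the argument of Proposition~\ref{P:4}, carried out with the open subgroup $R(\mathfrak{b})$ in place of $\Omega$. The idele $r$ at $\mathfrak{p}$ is compared with the idele that equals $a$ at $V(\mathfrak{b})$. The resulting principal idele $\pi$ then has valuation $1$ at $\mathfrak{p}$, is a unit at every other finite place, is congruent to $a$ modulo $\mathcal{P}_i^{n_i}$ at each place of $V(\mathfrak{b})$, and is positive at the real places.

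It remains to force $\mathfrak{p}$ to be $\Q$-split, i.e.\ of degree one over an odd rational prime. Let $N$ be the Galois closure of $K$ over $\Q$. A prime of $K$ whose Frobenius in $N/\Q$ is trivial has residue degree one, and in fact all primes of $K$ over that rational prime are then of degree one. So consider the compositum $M = N \cdot \ray{\mathfrak{b}}$, which is Galois over $K$. We look for an element $\tau \in \Gal(M/K)$ with $\tau|_{\ray{\mathfrak{b}}} = \sigma$ and $\tau|_N = \mathrm{id}$. Given such a $\tau$, Chebotarev applied to $M/K$ yields infinitely many primes $\mathfrak{p}$ with Frobenius $\tau$. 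These primes are unramified, and their restriction to $N$ is trivial. Since $\Gal(N/K)\subset\Gal(N/\Q)$, the Frobenius of $\mathfrak{p}\cap\Z$ in $N/\Q$ is trivial, so $K_{\mathfrak{p}}=\Q_p$. Discarding the finitely many primes over $2$ or dividing $\mathfrak{b}$ finishes the proof.

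The main obstacle is the existence of $\tau$, that is, compatibility on $N\cap\ray{\mathfrak{b}}$: we need $\sigma$ to act trivially on $E = N\cap \ray{\mathfrak{b}}$, and for a general $a$ this fails. I would resolve it by not fixing the residue class directly. First note that $E/K$ is abelian. By the Frobenius description just established, $\sigma|_E$ depends only on the class of $a$ in the ray class group. Any prime of $K$ that splits completely in $N$ has trivial Frobenius on $E$. The ray class attached to $\mathfrak{b}$ of such a prime is therefore forced into the subgroup corresponding to $\Gal(\ray{\mathfrak{b}}/E)$. So one must show that $\sigma|_E=1$ holds automatically for the classes we need. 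The way I would try this is to exploit that $E\subset N$ is contained in a field generated over $\Q$, so $E$ is the compositum of $K$ with an abelian extension of $\Q$ cut out by norms. The Frobenius of $a$ on $E$ is then computed via $N_{K/\Q}$, together with a reduction to the case where $a$ is adjusted by a unit or a norm. If this fails in general, the fallback is to replace $\mathfrak{b}$ by a multiple, which only refines the congruence condition. Since the statement is cited from \cite[Theorem 3.3]{MRS}, I expect the true argument to involve a careful choice of the auxiliary field so that the relevant intersection becomes trivial, and this linear-disjointness check is where the real work lies.
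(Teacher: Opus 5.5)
There is a genuine gap, and it sits exactly at the step you flag. Your first paragraph is fine: it is the idelic argument of Proposition~\ref{P:4} with $R(\mathfrak{b})$ in place of $\Omega$. One small slip: your two descriptions of $\sigma$ are inverse to each other, and only the second, $\alpha_{\ray{\mathfrak{b}}/K}(\mathbf{j}_{\mathfrak{b}}(a))^{-1}=\theta_{\mathfrak{b}}(a)$, is correct.

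The problem is the requirement $\tau|_N=\mathrm{id}$. It is much stronger than $K_{\mathfrak{p}}=\Q_p$, and in general it contradicts the congruence condition. Take $K=\Q(\sqrt[3]{2})$, so $N=K(\sqrt{-3})$, and let $\mathfrak{b}=3\cO_K$, $a=-1$. Suppose $\mathfrak{p}=\pi\cO_K$ with $\pi$ totally positive and $\pi\equiv-1\pmod{3\cO_K}$. Then $|\cO_K/\mathfrak{p}|=N_{K/\Q}(\pi)\equiv N_{K/\Q}(-1)=-1\pmod 3$. So the residue field has no primitive cube root of unity, and $\mathfrak{p}$ is inert in $N$. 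Hence no $\tau$ with $\tau|_{\ray{\mathfrak{b}}}=\sigma$ and $\tau|_N=\mathrm{id}$ exists, since otherwise Chebotarev would produce such a $\mathfrak{p}$ split in $N$. Imposing finer congruences (replacing $\mathfrak{b}$ by a multiple) cannot change this, and the norm computation you suggest merely confirms the obstruction. Yet the theorem holds here: $\pi=3\sqrt[3]{2}-1$ is positive, satisfies $\pi\equiv-1\pmod{3\cO_K}$, and has norm $53$, so $\pi\cO_K$ is a $\Q$-split prime.

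There is also a second error. $\tau|_N=\mathrm{id}$ only says that the Frobenius of $\mathfrak{P}\mid\mathfrak{p}$ in $N/K$ is trivial. That element is the $f(\mathfrak{p}\mid p)$-th power of the Frobenius of $\mathfrak{P}\mid p$ in $\Gal(N/\Q)$, so it does not force $f(\mathfrak{p}\mid p)=1$. For instance, in the same $K$ the residue-degree-$2$ prime above $5$ splits in $N$.

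The missing idea, encoded in Proposition~\ref{MRS3.5} and used the same way in Step~2 of the proof of Theorem~\ref{T:A301}, is to apply Chebotarev over $\Q$ rather than over $K$. Let $F$ be the Galois closure of $\ray{\mathfrak{b}}$ over $\Q$, and lift $\theta_{\mathfrak{b}}(a)$ to some $\widetilde{\sigma}\in\Gal(F/K)\subset\Gal(F/\Q)$. Chebotarev gives infinitely many rational primes $p>2$, unramified in $F$, each with a prime $w\mid p$ of $F$ whose Frobenius is exactly $\widetilde{\sigma}$. The decomposition group $\langle\widetilde{\sigma}\rangle$ then lies in $\Gal(F/K)$. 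So for $v=w|_K$ one gets $[K_v:\Q_p]=1$, and the Frobenius of $w\mid v$ is $\widetilde{\sigma}$ itself. Thus $(\mathfrak{p}_v,\ray{\mathfrak{b}}/K)=\theta_{\mathfrak{b}}(a)$, and your idelic argument produces the generator $\pi$. You discard the finitely many $p$ with $v\in V(\mathfrak{b})$, and distinct $p$ give distinct $\mathfrak{p}_v$. This only asks that one prime of $K$ above $p$ have degree one, not that $p$ split completely in $N$, so no compatibility condition arises. Alternatively, drop $N$ altogether: apply Chebotarev to $\ray{\mathfrak{b}}/K$ alone and discard the primes of $K$ of residue degree $\ge 2$ over $\Q$, which have Dirichlet density zero.
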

 
The proof generally follows the strategy used in the proof of Dirichlet's Theorem in \cite[(A.12)]{BMS}, which we already used in the proof of Proposition \ref{P:A01}. The theorem was stated in \cite{MRS} for the ring $\cO_S$ where $S$ can be any finite subset of $V^K$ containing $V^K_{\infty}$, but we only need it for the ring of integers $\cO_K$, i.e., when $S = V^K_{\infty}$. It should also be noted that the argument in \cite{MRS} was given for the case where $\mathfrak{b}$ is principal but the general case does not require any changes. In fact, the general case can be reduced to the case of a principal $\mathfrak{b}$ by replacing a given $\mathfrak{b}$ with its suitable power which becomes principal. 

We will now quote the following statement which is needed for refinements of Theorem \ref{MRS3.3}.

\begin{prop}[{\cite[Proposition~3.5]{MRS}}] \label{MRS3.5}
Let $\mathfrak{b}$ be a nonzero ideal of $\cO_K$, let $a \in \cO_K$ be a nonzero element which is relatively prime to $\mathfrak{b}$, and let $F$ be a finite Galois extension of $\Q$ that contains the ray class field $\ray{\mathfrak{b}}$. Assume that a rational prime $p$ is unramified in $F$ and there exists an extension $w$ of the $p$-adic valuation $v_p$ to $F$ such that the Frobenius automorphism of $F/\Q$ corresponding to $w \vert v_p$ restricts to $\theta_{\mathfrak{b}}(a)$ on $\ray{\mathfrak{b}}$. If the restriction $v$ of $w$ to $K$ does not belong to $V(\mathfrak{b})$ then 
\begin{enumerate}[label=\upshape{(\alph*)}, ref=\alph*]
	\item \label{MRS3.5-a}
	$K_v = \Q_p$,
	and
	\item \label{MRS3.5-b}
	the prime ideal\/ $\mathfrak{p}_v$ of $\cO_K$ corresponding to~$v$ is principal with a generator~$\pi$ such that $\pi \equiv a \pmod{\mathfrak{b}}$ and $\pi > 0$ in every real completion of~$K$.
	\end{enumerate}
\end{prop}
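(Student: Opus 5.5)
We need to prove Proposition MRS3.5: given the Frobenius of $w \mid v_p$ in $F/\Q$ restricting to $\theta_{\mathfrak{b}}(a)$ on $\ray{\mathfrak{b}}$, and $v \notin V(\mathfrak{b})$, show (a) $K_v=\Q_p$ and (b) $\mathfrak{p}_v$ is principal with a generator $\pi\equiv a \pmod{\mathfrak{b}}$, totally positive.

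The plan is to compare two descriptions of the Frobenius. Let $\varphi\in\Gal(F/\Q)$ be the Frobenius of $w\mid v_p$. For part (a), note that the decomposition group $D(w\mid v_p)$ is generated by $\varphi$, and the local degree $[K_v:\Q_p]$ equals the index $[D(w\mid v_p): D(w\mid v_p)\cap\Gal(F/K)]$. So it suffices to show $\varphi\in\Gal(F/K)$, i.e.\ $\varphi$ acts trivially on $K$. This is automatic from the hypothesis: $\varphi$ restricts on $\ray{\mathfrak{b}}\supset K$ to an element of $\Gal(\ray{\mathfrak{b}}/K)$, namely $\theta_{\mathfrak{b}}(a)$, so $\varphi|_K=\mathrm{id}$. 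Hence the residue degree $f(v\mid p)=1$. Since $p$ is unramified in $F$, also $e(v\mid p)=1$, giving $K_v=\Q_p$.

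For part (b), with $f(v\mid p)=1$, the Frobenius of $w\mid v$ in $F/K$ is $\varphi$ itself (no power needed). Restricting to $\ray{\mathfrak{b}}$, which is unramified at $v$ because $v\notin V(\mathfrak{b})$, we get $(\mathfrak{p}_v,\ray{\mathfrak{b}}/K)=\theta_{\mathfrak{b}}(a)$. By class field theory, the Artin symbol of $\mathfrak{p}_v$ is $\alpha_{\ray{\mathfrak{b}}/K}(r)$, where $r$ is the idele with a uniformizer at $v$ and $1$ elsewhere. We therefore get $\alpha(r)=\alpha(\mathbf{j}_{\mathfrak{b}}(a))^{-1}$, i.e.\ $r\,\mathbf{j}_{\mathfrak{b}}(a)\in R(\mathfrak{b})K^\times$. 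We write
$$r\,\mathbf{j}_{\mathfrak{b}}(a)\,\omega=\pi,\qquad \omega\in R(\mathfrak{b}),\ \pi\in K^\times,$$
with a suitable sign convention, exactly as in equation (E:A007) of Proposition~\ref{P:4}. Reading off the components of this identity gives the three required properties of $\pi$:
\begin{itemize}
\item At finite $v'\notin V(\mathfrak{b})\cup\{v\}$, $\pi$ is a unit. At $v$, $\pi$ has valuation $1$. At $v_i\in V(\mathfrak{b})$, $\pi=a\omega_i$ with $\omega_i\in 1+\mathcal{P}_i^{n_i}$, and $a$ is a unit there since $a$ is prime to $\mathfrak{b}$. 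Thus $\pi\in\cO_K$ and $\pi\cO_K=\mathfrak{p}_v$.
\item At $v_i$, $\pi\equiv a \pmod{\mathcal{P}_i^{n_i}}$, so $\pi\equiv a\pmod{\mathfrak{b}}$.
\item At real places, $\pi=\omega_v>0$.
\end{itemize}

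The main obstacle is bookkeeping rather than substance. One must check that the sign and inverse conventions in $\theta_{\mathfrak{b}}$ match the normalization of the reciprocity map, so that the Frobenius corresponds to the uniformizer idele and not its inverse. One must also justify that the restriction of the $F/\Q$ Frobenius equals the $\ray{\mathfrak{b}}/K$ Frobenius at $v$, which uses $f(v\mid p)=1$. I would handle both by fixing the convention $\alpha(\text{uniformizer at }v)=\mathrm{Frob}_v$ at the outset, as is done in Proposition~\ref{P:4}.
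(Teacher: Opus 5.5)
Your proof is correct. The paper does not reprove this result; it quotes it from \cite[Proposition~3.5]{MRS}. Your argument is the standard one behind it: $\varphi$ fixes $K$ because it restricts to an element of $\Gal(\ray{\mathfrak{b}}/K)$, so the decomposition group $\langle\varphi\rangle$ lies in $\Gal(F/K)$ and $K_v=\Q_p$. From there, $\varphi|\ray{\mathfrak{b}} = (\mathfrak{p}_v, \ray{\mathfrak{b}}/K)$, and your idele computation is exactly the one the paper carries out in the proof of Proposition~\ref{P:4}, with $R(\mathfrak{b})$ in place of $\Omega$ and the same normalization of the reciprocity map.
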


We note that the above statement is the particular case of Proposition 3.5 in \cite{MRS} that is obtained by letting $S = V^K_{\infty}$. 

\vskip2mm 

Next, we will quote a statement pertaining to Galois theory that we will need in the  next section.

\begin{lemma}[cf.~{\cite[Lemma~3.9]{MRS}}]\label{MRS3.9}
Let $P$ be a finite set of rational primes, and set $\mu' = \mu \cdot \prod_{p \in P} p$. Furthermore, let $L_1$ be a finite abelian extension of $K$, and let $L_2$ be the splitting field over $K$ of the polynomial $x^{\mu'} - u$, where $u \in \cO_S^{\times}$ is a fundamental unit. Then, given any $\sigma_0 \in \Gal \bigl( (L_1 \cap L_2)/K \bigr)$, there exists $\sigma_2 \in \Gal(L_2/K)$ such that 
\begin{enumerate}
	\item \label{MRS3.9-restrict}
	$\sigma_2 |(L_1 \cap L_2) = \sigma_0$,
	and
	\item \label{MRS3.9-nontriv}
	for every $p \in P$, if $p^d = \mu_p$, then the field $L_2$ contains a root of $u$ of degree 
    $p^{d+1} = \mu_p^+$ that is not fixed by $\sigma_2$.
	\end{enumerate}
\end{lemma}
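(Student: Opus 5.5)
Here $\Gal(L_2/K)$ is understood through the Kummer structure of $L_2 = K(\zeta_{\mu'}, \sqrt[\mu']{u})$. Set $E = K(\zeta_{\mu'})$. Since $u$ is fundamental, it is not of the form $\zeta\cdot w^\ell$ for any prime $\ell$ and root of unity $\zeta\in\roots_K$. By \cite[Corollary 2.4]{MRS} one then has $[K(\sqrt[n]{u}):K]=n$ for all $n$; the same corollary, applied over $E$, is expected to give $[L_2:E]=\mu'$. So $\Gal(L_2/E)\cong \Z/\mu'\Z$ acts on $\sqrt[\mu']{u}$ through multiplication by $\roots_{\mu'}$. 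Also, by \cite[Proposition 2.1]{MRS}, the maximal abelian subextension of $L_2/K$ is $E(\sqrt[\mu]{u}) = K(\zeta_{\mu'},\sqrt[\mu]{u})$. Since $L_1/K$ is abelian, $L_1\cap L_2 \subset E(\sqrt[\mu]{u})$.

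The plan is as follows. First extend $\sigma_0$ arbitrarily to some $\tau\in\Gal(L_2/K)$; this is possible because restriction to $L_1\cap L_2$ is surjective. Then modify $\tau$ by elements of $H := \Gal\bigl(L_2/E(\sqrt[\mu]{u})\bigr)$, which fix $L_1\cap L_2$ pointwise. The group $H$ is the subgroup of $\Gal(L_2/E)\cong\Z/\mu'\Z$ of order $\mu'/\mu=\prod_{p\in P}p$. Concretely, $h\in H$ sends $\alpha:=\sqrt[\mu']{u}$ to $\xi\alpha$ with $\xi^{\mu}=1$, and every $\xi\in\roots_{\mu'}$ with $\xi^\mu = 1$ arises.

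Now fix $p\in P$ with $p^d=\mu_p$. The relevant root of degree $p^{d+1}$ is $\beta_p=\alpha^{\mu'/p^{d+1}}$; it is a genuine primitive root of $u$ of that degree in $L_2$, because $\mu'_p=p^{d+1}$. For $\sigma=\tau h$, write $\tau(\alpha)=\eta\alpha$ with $\eta\in\roots_{\mu'}$ (after choosing $\tau$ with $\tau(\alpha)$ a root of $x^{\mu'}-u$, which it always is). The condition $\sigma(\beta_p)\ne\beta_p$ then reduces to a condition on roots of unity, namely that $\bigl(\tau(\xi)\eta\bigr)^{\mu'/p^{d+1}}\neq 1$ in $\roots_{p^{d+1}}$, where $\tau$ acts on $\xi\in E$ via the cyclotomic character. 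The image of $\xi\mapsto \xi^{\mu'/p^{d+1}}$ over the allowed $\xi$ (those with $\xi^\mu=1$) covers $\roots_{p^{d}}$. It is not all of $\roots_{p^{d+1}}$, and this is the crux: the condition is a single "avoid one coset of $\roots_{p^d}$"-type constraint at each $p$. Because $\xi^{\mu'/p^{d+1}}$ ranges over a subgroup of index $p$ in $\roots_{p^{d+1}}$, and $\tau(\xi)\eta$ is an affine shift of it, at most one coset is bad. Hence at least $p^d(p-1)$ of the $p^{d+1}$ options are fine when $d\ge1$. When $d=0$ we have $\beta_p\notin K$ anyway, and $h$ multiplies $\beta_p$ by an arbitrary $p$-th root of unity, so the bad choices are exactly one value among $p$.

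By the Chinese remainder theorem, $H\cong\prod_{p\in P}\Z/p$ up to the $p$-parts, so these constraints for different $p$ can be satisfied simultaneously. The main obstacle is exactly the case $d\ge1$. Here $h\in H$ only multiplies $\beta_p$ by elements of $\roots_{p^d}$, whose action is determined by $\xi^{\mu'/p^{d+1}}$ where $\xi^\mu=1$. If $\tau(\beta_p)=\beta_p\cdot\omega$ with $\omega\in\roots_{p^d}$, one cannot escape the fixed situation using $H$ alone. So I would instead also allow modification by all of $\Gal(L_2/E)$. An element $g$ of order divisible by $p^{d+1}$ moves $\sqrt[\mu]{u}$, but this can be compensated because $\sqrt[\mu_p]{u}$-components generate a part of $L_1\cap L_2$ constrained by $\sigma_0$ only modulo $\mu$. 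I would check via \cite[Lemma 3.9]{MRS}'s original argument that $\sigma_0$ prescribes $\beta_p^{p}$ but not $\beta_p$ itself. This follows because $K(\beta_p)\cap L_1\subset K(\beta_p^{p})$ by the maximal-abelian statement. Hence the fibre over the prescribed value of $\beta_p^p$ consists of $p$ choices, only one of which fixes $\beta_p$.
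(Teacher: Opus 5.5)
\textbf{There is a genuine gap: the skeleton is right, but two key steps are wrong and the last paragraph does not repair them.} The paper gives no proof of this lemma (it is quoted from \cite{MRS}), so your argument has to stand on its own. The skeleton is: extend $\sigma_0$ to some $\tau\in\Gal(L_2/K)$, then correct $\tau$ by $H=\Gal\bigl(L_2/M\bigr)$ with $M:=E(\sqrt[\mu]{u})$. This $H$ fixes $L_1\cap L_2$ once $M$ is known to be the maximal abelian subextension of $L_2/K$.

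\textbf{First error: $[L_2:E]=\mu'$ is false in general.} Take $K=\Q$, $\cO_S=\Z[1/5]$, $u=5$, $P=\{5\}$. Then $\sqrt{5}\in\Q(\zeta_5)=E$, so $[L_2:E]=5<10$. The non-power hypothesis on $u$ behind \cite[Corollary~2.4]{MRS} does not survive passage from $K$ to $E$. So the structure of $H$ cannot be read off from $\Gal(L_2/E)\cong\Z/\mu'\Z$.

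What you need, and what is true, is $[L_2:M]=\prod_{p\in P}p$, with $H$ acting on $\alpha$ through all of $\roots_{\mu'/\mu}$. Use Kummer theory over $M\ni\zeta_{\mu'}$. If some $p\in P$ did not divide $[L_2:M]$, then $\alpha^{\mu'/(\mu p)}\in M$ would be a $\mu p$-th root of $u$. Then $K(\zeta_{\mu p},\sqrt[\mu p]{u})\subseteq M$ would be abelian over $K$. Schinzel's theorem on abelian radical extensions would then give $u\in\roots_K\cdot(K^{\times})^{p}$, which is impossible for a fundamental unit. The same observation proves that $M$ is the maximal abelian subextension, which you only asserted: every intermediate field of $L_2/M$ strictly larger than $M$ contains such a root.

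\textbf{Second error: $H$ is misdescribed.} Its elements fix $\alpha^{\mu'/\mu}$, so $h(\alpha)=\xi\alpha$ with $\xi^{\mu'/\mu}=1$, not $\xi^{\mu}=1$. Your version contradicts your own count $|H|=\mu'/\mu$. It also contradicts your claim for $d=0$, since $\xi^{\mu}=1$ forces $\xi^{\mu'/p}=1$.

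With the correct description there is no obstacle at $d\ge 1$. The exponent $\mu'/p^{d+1}=(\mu/p^d)\prod_{p'\in P\setminus\{p\}}p'$ is prime to $p$ and kills the other prime components. Hence $\xi\mapsto\xi^{\mu'/p^{d+1}}$ maps $\roots_{\mu'/\mu}$ onto $\roots_p$ via the $p$-component of $\xi$. Write $\tau(\beta_p)=\omega_p\beta_p$ with $\omega_p\in\roots_{p^{d+1}}\subset E$. Then $h\tau(\beta_p)=\omega_p\,\xi^{\mu'/p^{d+1}}\beta_p$. Since $\roots_{\mu'/\mu}=\prod_{p\in P}\roots_p$, you may choose the $p$-components of $\xi$ independently, each avoiding its at most one bad value. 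Then $\sigma_2=h\tau$ satisfies both conditions.

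\textbf{Your last paragraph does not work.} It abandons $H$ in favour of elements of $\Gal(L_2/E)$ that move $\sqrt[\mu]{u}$. Such elements can change the restriction to $L_1\cap L_2$, for example when $L_1\supseteq K(\sqrt[\mu]{u})$. The promised compensation is never given. The paragraph appeals to the original proof of the very lemma being proved. It also treats one prime at a time, although condition~(2) must hold for all $p\in P$ simultaneously.
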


Finally, we will need the following technical statement. 

\begin{lemma}[{\cite[Lemma~4.3]{MRS}}] \label{MRS4.3}
Suppose we are given $(a,b)\in \row(\cO_S)$, and a finite subset $T\subseteq V^K_f$.  Then there exist $\alpha \in \cO_K$ and $r\in\cO_S^\times$ such that
$V(\alpha)\cap T=\varnothing$, and $(a,b)\reduce{1}(\alpha r,b)$.
\end{lemma}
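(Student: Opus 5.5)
We will prove Lemma~\ref{MRS4.3} by a single column operation $(a,b) \reduce{1} (a+bt, b)$, choosing $t \in \cO_S$ so that $a+bt$ has a convenient factorization. The plan is to produce an element $a' \equiv a \pmod{b\cO_S}$ whose $\cO_S$-ideal has no prime factors lying in $T$, and then to separate $a'$ into an $\cO_K$-part and an $S$-unit.

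First, we reduce to $b \neq 0$. If $b=0$ then $a$ is a unit, and we may take $\alpha = 1$, $r = a$ (zero operations). Also, primes $v \in T \cap S$ play no role for $\cO_S$, so we may drop them from $T$. Next, we find $a'$ via the Chinese Remainder Theorem. Let $T' = T \setminus S$, a finite set of primes of $\cO_S$. We look for $t \in \cO_S$ with $a + bt \notin \mathfrak{p}_v$ for each $v \in T'$.
\begin{itemize}
\item If $b \notin \mathfrak{p}_v$, we can solve $a+bt \equiv 1 \pmod{\mathfrak{p}_v}$.
\item If $b \in \mathfrak{p}_v$, then $a \notin \mathfrak{p}_v$ because $a\cO_S + b\cO_S = \cO_S$, so any $t$ works modulo $\mathfrak{p}_v$.
\end{itemize}
Combining these congruences by CRT gives $t$, and we set $a' = a+bt$. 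Then $v(a') = 0$ for all $v \in T'$. If $a' = 0$ happens, we adjust $t$ by an element of $\prod_{v\in T'}\mathfrak{p}_v$, which is possible since infinitely many choices exist.

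Now we write $a'$ in the required form. By Lemma~\ref{L:A102}, $a' = \beta s^{-1}$ with $\beta \in \cO_K$ and $s \in \cO_S^\times \cap \cO_K$. The issue is that $\beta$ may still have valuations at finite places in $S$, and those places might lie in $T$. To fix this, let $\mathfrak{s}$ be the product of the primes of $\cO_K$ corresponding to $S \cap V^K_f$, as in the proof of Lemma~\ref{L:A102}. Choose $h$ so that each power of the relevant primes becomes principal; specifically, take $h$ to be the class number of $\cO_K$. Then $\mathfrak{p}_v^{h} = \pi_v\cO_K$ for $v \in S \cap V^K_f$, and each $\pi_v$ is an $S$-unit lying in $\cO_K$.

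We then divide $\beta$ by suitable powers of the $\pi_v$. Since $\pi_v$ can only be extracted in blocks of $h$, we cannot always remove $v(\beta)$ entirely. The remedy is to use the freedom in choosing $t$ to first force $v(a') \equiv 0 \pmod h$, or more simply $v(a')=0$, at $v \in S \cap T$. However, $a'$ is only defined up to $S$-units there.

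The cleaner route is to reverse the order. Write $\alpha = \beta \prod_v \pi_v^{-k_v}$ with $k_v = \lfloor v(\beta)/h \rfloor$. Then $\alpha$ is integral at every finite place and $v(\alpha) \in [0,h)$ for $v \in S$. This leaves an obstacle: $v(\alpha)$ may be nonzero for $v \in S\cap T$.

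This is the main difficulty, and the plan to overcome it is to enlarge the CRT step. For $v \in S \cap T$ (finite), we additionally impose via weak approximation that $a+bt$ is close, in $K_v$, to the $\pi_v$-power $\pi_v^{N}$ for a suitable $N$. We do this by approximation in $K_v$ using strong approximation for $\cO_S$ away from one place of $S$. If $S \cap T$ contains all of $S$ this fails, but then $S$ is a set of finite places, which is impossible in characteristic zero since $S \supseteq V^K_\infty \ne \varnothing$. With this choice $v(a') \equiv 0 \pmod h$, so $v(\alpha)=0$.

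Finally we set $r = s^{-1}\prod_v \pi_v^{k_v} \in \cO_S^\times$. This gives $a' = \alpha r$ with $V(\alpha)\cap T = \varnothing$, as required.
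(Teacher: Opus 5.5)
The paper does not reprove this lemma; it quotes it from \cite{MRS}. Your final argument is correct in substance and is the natural self-contained proof. You choose $t\in\cO_S$ by strong approximation away from an archimedean place, so that $a'=a+bt$ is a unit at the places of $T\setminus S$ and has valuation $\equiv 0 \pmod h$ at the places of $T\cap S$. You then strip off powers of the $S$-units $\pi_v$ that generate $\mathfrak{p}_v^h$. Two statements in your write-up are wrong as written and need fixing.

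First, the early reduction ``primes $v\in T\cap S$ play no role, so we may drop them from $T$'' is false. $V(\alpha)$ is computed for $\alpha\in\cO_K$ and includes the finite places of $S$, and the application in Section~\ref{S:NumberFieldPf} needs exactly $V(\alpha)\cap S=\varnothing$. You effectively retract this later, so simply delete it.

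Second, you build $\alpha$ from $\beta=a's$ with $s\cO_K=\mathfrak{s}^d$. Then $v(\beta)=v(a')+d$ for every finite $v\in S$, so $h\mid v(a')$ gives $v(\alpha)=0$ only when $h\mid d$. There are two ways to fix this:
\begin{itemize}
\item take $d$ to be a multiple of $h$ in Lemma~\ref{L:A102}; or, more cleanly,
\item drop $\beta$ and $s$ altogether and put $\alpha=a'\prod_{v\in S\cap V^K_f}\pi_v^{-k_v}$, with $k_v=\lfloor v(a')/h\rfloor$ and $r=\prod_v\pi_v^{k_v}\in\cO_S^\times$.
\end{itemize}
With the second choice, $v(\alpha)\in[0,h)$ on $S\cap V^K_f$ and $v(\alpha)=v(a')\ge 0$ off $S$, so $\alpha\in\cO_K$.

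You could also take the local target $a'\approx 1$ at each $v\in T\cap S$. Then $v(a')=0$ there, $k_v=0$, and no congruence modulo $h$ is needed at all.
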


\section{Proof of Theorem \ref{T:1} in characteristic zero} \label{S:NumberFieldPf}

\subsection{Main argument} It is enough to show that for $(a_0 , b_0) \in \row(\cO_S)$ we have $(a_0 , b_0) \reduce{6} (1 , 0)$. First, by Lemma \ref{MRS4.3} there exists $a \in \cO_S$ of the form $a = \alpha r$ with $\alpha \in \cO_K$ and $r \in \cO_S^{\times}$, such that $V(\alpha) \cap \bigl( S \cup V(\mu) \bigr) = \varnothing$ and $(a_0 , b_0) \reduce{1} (a , b)$, where $b = b_0$. So, it remains to show that if $(a , b) \in \row(\cO_S)$ is such that $a = \alpha r$ where $\alpha \in \cO_K$ and $r \in \cO_S^{\times}$ with $V(\alpha) \cap (S \cup V(\mu)) = \varnothing$, then $(a , b) \reduce{5} (1 , 0)$. The technical part of the argument is contained in the following number-theoretic statement. We let $u \in \cO_S^{\times}$ denote a fundamental unit that belongs to $\cO_K$ -- see Lemma  \ref{L:A102}.

\begin{thm}\label{T:A301}
Let $(a , b) \in \row(O_S)$ be such that $a \notin \cO_S^{\times}$ and for some unit $r \in \cO_S^{\times}$ we have $\alpha \coloneqq a r \in \cO_K$ and $V(\alpha) \cap \bigl( S \cup V(\mu) \bigr) = \varnothing$. Furthermore, let $P \subset V^K$ be a finite set containing $S$. Then there exist $\Q$-split principal ideals $\mathfrak{p} ,  \mathfrak{q}$ of $\cO_K$ with generators $\pi$ and $q$ respectively such that $v_{\mathfrak{p}} , v_{\mathfrak{q}} \notin P$ and 
\begin{enumerate}
    \item \label{T:A301-q=b}
    $q \equiv b \pmod{a\cO_S}$, 

\item \label{T:A301-q=power}
$q \pmod{\mathfrak{p}}$ is a $\mu$-th power in $(\cO_K/\mathfrak{p})^{\times}$, 

\item \label{T:A301-pigen}
$\pi \pmod{\mathfrak{q}^2}$ generates $(\cO_K/\mathfrak{q}^2)^{\times}$, 

\item \label{T:A301-mu}
$\mu \mid \phi(\mathfrak{p})$ and for each prime $p$ dividing $\phi(a \cO_S)$, the $p$-primary component of $\phi(\mathfrak{p})/\mu$ divides the $p$-primary component of the order of $u \pmod{\mathfrak{p}}$ in $(\cO_K/\mathfrak{p})^{\times}$.
\end{enumerate}
\end{thm}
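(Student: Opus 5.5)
The plan is to construct $q$ first and then $\pi$. Throughout, $\mathfrak{q}$ is the ideal on which conditions (\ref{T:A301-pigen}) and (\ref{T:A301-q=power}) for $\pi$ depend, and $\pi$ is chosen through a Chebotarev argument in a compositum of a ray class field and a Kummer field.

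\emph{Step 1: choosing $q$.} By Lemma~\ref{L:A102}, write $b=\beta s^{-1}$ with $\beta\in\cO_K$ and $s\in\cO_S^\times\cap\cO_K$. Since $a\cO_S=\alpha\cO_S$ and $V(\alpha)\cap S=\varnothing$, the element $s$ is invertible modulo $\alpha\cO_K$. Pick $s'\in\cO_K$ with $ss'\equiv1\pmod{\alpha}$; then $\beta s'$ is prime to $\alpha$. I would apply Theorem~\ref{MRS3.3} with modulus $\mathfrak{b}=\alpha\cO_K\cdot\mathfrak{c}$, where $\mathfrak{c}$ is a large power of the primes in $V(\mu)$ and in $P\cap V^K_f$. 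This gives a $\Q$-split principal prime $\mathfrak{q}=q\cO_K$, totally positive, with $v_{\mathfrak{q}}\notin P$ and $q\equiv\beta s'\pmod{\alpha}$, which is condition (\ref{T:A301-q=b}). The congruence at $\mathfrak{c}$ is imposed so that $q$ is a $\mu$-th power in $K_v$ for every $v\in V(\mu)$ and every $v\in P\setminus V^K_\infty$; total positivity handles the real places. Since $\mu\in\cO_K$, the places $V(\mu)$ are finite, so this congruence is a legitimate ray class condition. The effect is that $\powres{q,\star}{v}{\mu}=1$ at all these places.

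\emph{Step 2: choosing $\pi$.} Let $g\in\cO_K$ generate $(\cO_K/\mathfrak{q}^2)^\times$; such a $g$ exists because the group is cyclic by Lemma~\ref{MRS3.1}. Set $\mathfrak{b}'=\mathfrak{q}^2$, let $L_1=\ray{\mathfrak{b}'}$, and let $L_2$ be the splitting field of $x^{\mu'}-u$, where $\mu'=\mu\prod_{p\in P'}p$ and $P'$ is the set of primes dividing $\phi(a\cO_S)$. Put $F$ equal to the Galois closure over $\Q$ of $L_1L_2$.

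Lemma~\ref{MRS3.9} supplies $\sigma_2\in\Gal(L_2/K)$ agreeing with $\theta_{\mathfrak{b}'}(g)$ on $L_1\cap L_2$ and moving some $\mu_p^+$-th root of $u$ for each $p\in P'$. These glue to an element $\sigma$ of $\Gal(L_1L_2/K)$. Chebotarev then gives infinitely many rational primes $p$ with a prime $w$ of $F$ whose Frobenius lies over $\sigma$; I would choose one such $p$ avoiding the finitely many bad places. By Proposition~\ref{MRS3.5}, the prime $\mathfrak{p}=\mathfrak{p}_v$ below $w$ is $\Q$-split, principal with a totally positive generator $\pi\equiv g\pmod{\mathfrak{q}^2}$, which is condition (\ref{T:A301-pigen}).

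It remains to verify condition (\ref{T:A301-mu}). The Frobenius fixes $\zeta_\mu$, so $\mu\mid\phi(\mathfrak{p})$. It also fixes $\zeta_{\mu'}$ and the $\mu_p$-th roots of $u$ as far as needed, but it does not fix some $\mu_p^+$-th root of $u$. Consequently the $p$-part of the index of $\langle u\bmod\mathfrak{p}\rangle$ in $(\cO_K/\mathfrak{p})^\times$ is exactly $\mu_p$. This is precisely the statement that the $p$-part of $\phi(\mathfrak{p})/\mu$ divides the order of $u\bmod\mathfrak{p}$.

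\emph{Step 3: condition (\ref{T:A301-q=power}), the main obstacle.} By property \ref{SS:PRS}(\ref{powres-mpower}), it suffices to show $\powres{\pi,q}{\mathfrak{p}}{\mu}=1$. By the Reciprocity Law, this symbol equals the inverse of the product of the symbols $\powres{\pi,q}{v}{\mu}$ over all other places $v$.
\begin{itemize}
\item At places outside $\{\mathfrak{p},\mathfrak{q}\}\cup V(\mu)\cup V^K_\infty$, both $\pi$ and $q$ are units, so the symbols are trivial by property~(\ref{powres-cong}).
\item At $V(\mu)$ and the real places, the symbols are trivial by the choice of $q$ in Step 1.
\item At $\mathfrak{q}$, the symbol is determined by $\pi\bmod\mathfrak{q}$, hence by $g$. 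With a primitive-root $g$ it is a primitive $\mu$-th root of unity, which is the wrong outcome.
\end{itemize}
The fix I would try first is to weaken the prescription on $\pi$ at the places $V(\mu)$ instead of making $q$ a $\mu$-th power there. Concretely, add to $\mathfrak{b}'$ a high power of the $V(\mu)$-primes, and prescribe $\pi$ modulo it so that $\powres{\pi,q}{v}{\mu}$ cancels the $\mathfrak{q}$-contribution. This is possible by property~(\ref{SS:PRS-proots}) together with the fact that $\mathfrak{q}$ is $\Q$-split and unramified, so $q$ is a $v$-unit that is not a $\mu$-th power locally.

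Making this cancellation compatible with the Kummer conditions is the delicate point. The ray class data at $V(\mu)$ can interact with $L_2\ni\zeta_{\mu'}$, and Lemma~\ref{MRS3.9} must still produce $\sigma_2$ agreeing with the modified $\theta_{\mathfrak{b}'}$-value on $L_1\cap L_2$. I expect this compatibility check, rather than the Chebotarev step itself, to be the main difficulty.
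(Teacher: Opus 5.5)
Your architecture is the paper's: $q$ from Theorem~\ref{MRS3.3} with a congruence modulo $\alpha$, $\pi$ from Chebotarev in $L_1L_2$ via Lemma~\ref{MRS3.9} and Proposition~\ref{MRS3.5}, reciprocity for (\ref{T:A301-q=power}), and a non-fixed $\mu_\ell^+$-th root of $u$ for (\ref{T:A301-mu}). However, condition (\ref{T:A301-q=power}) is not established, and Steps~1--2 as written actually violate it. If $q\in(K_v^\times)^\mu$ for all $v\in V(\mu)$ and $\pi\equiv g\pmod{\mathfrak{q}^2}$, then reciprocity gives $\powres{\pi,q}{\mathfrak{p}}{\mu}=\powres{g,q}{\mathfrak{q}}{\mu}^{-1}$. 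This is a primitive $\mu$-th root of unity, hence $\neq 1$ since $\mu\ge 2$.

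Your repair points in the right direction, but its justification is wrong and the key construction is missing. Once you drop the $\mu$-th power condition at $V(\mu)$, nothing controls the class of $q$ in $K_v^\times/(K_v^\times)^\mu$ for $v\in V(\mu)$. That class depends only on $q$ modulo a power of $\mathfrak{p}_v$, and $\Q$-splitness of $\mathfrak{q}$ (a condition at $\mathfrak{q}$) says nothing about it. So $q$ may again be a local $\mu$-th power on all of $V(\mu)$, and then no choice of $\pi$ helps. ``$q$ is not a local $\mu$-th power'' would not suffice either. Since $\pi$ is a unit at $V(\mu)$ and the value to be cancelled is \emph{primitive} for every admissible $g$, you need $x\mapsto\prod_{v\in V(\mu)}\powres{x,q}{v}{\mu}$ on $V(\mu)$-units to be onto $\roots_\mu$. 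That property must be built into the choice of $q$ in Step~1.

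This is exactly what the paper's preliminary construction does. Fix $N$ with $1+\mathcal{P}_v^N\subset(K_v^\times)^\mu$ for $v\in V(\mu)$, and put $\mathfrak{n}=\prod_{v\in V(\mu)}\mathfrak{p}_v^N$. Then combine three ingredients: property~(\ref{SS:PRS-proots}) at one place $v_i$ above each prime $p_i\mid\mu$; property~(\ref{SS:PRS-takepwer}) to pass from degree $p_i^{e_i}$ to degree $\mu$; and the Chinese Remainder Theorem (taking elements $\equiv 1$ at the other places above $p_i$). This produces $z_0,z_0'\in\cO_K$ and a primitive $\zeta$ such that, for every $j>0$, $\prod_{v\in V(\mu)}\powres{z,z'}{v}{\mu}=\zeta^j$ whenever $z\equiv z_0^j$ and $z'\equiv z_0'\pmod{\mathfrak{n}}$. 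One then imposes $q\equiv z_0'\pmod{\mathfrak{n}}$ in Step~1 and picks $j$ with $\zeta^j=\powres{g,q}{\mathfrak{q}}{\mu}^{-1}$. Finally one takes $L_1=\ray{\mathfrak{q}^2\mathfrak{n}}$ and $\sigma_1=\theta_{\mathfrak{q}^2\mathfrak{n}}(c)$, where $c\equiv g\pmod{\mathfrak{q}^2}$ and $c\equiv z_0^j\pmod{\mathfrak{n}}$.

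The compatibility issue you anticipate does not arise: Lemma~\ref{MRS3.9} holds for an arbitrary finite abelian $L_1$ and an arbitrary $\sigma_0$.

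Two smaller points:
\begin{itemize}
\item Requiring $q$ to be a $\mu$-th power at the finite places of $P$ is unnecessary. At such places with $v\notin V(\mu)$, $\pi$ and $q$ are units and the symbol is trivial by property~(\ref{powres-cong}). The requirement can also contradict $q\equiv\beta s'\pmod{\alpha}$ when $V(\alpha)\cap P\neq\varnothing$.
\item In checking (\ref{T:A301-mu}), the Frobenius need not fix $\zeta_{\mu'}$ or the $\mu_\ell$-th roots of $u$. You therefore only get that the $\ell$-part of $[(\cO_K/\mathfrak{p})^\times:\langle\bar u\rangle]$ is at most $\mu_\ell$, not exactly $\mu_\ell$. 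That bound is all that is needed, via the paper's contrapositive. If $(\phi(\mathfrak{p})/\mu)_\ell>t_\ell$, then $\ell^{d+1}\mid\phi(\mathfrak{p})$ and $\bar u$ is an $\ell^{d+1}$-th power. Hence $x^{\ell^{d+1}}-u$ splits over $K_v$ by Hensel, and $\sigma_2$ fixes all its roots, a contradiction.
\end{itemize}
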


We will now assume Theorem \ref{T:A301} and complete the proof of Theorem \ref{T:1}; the proof of Theorem \ref{T:A301} will occupy the remainder of the article. So, let $(a , b) \in \row(\cO_S)$ be such that for some $r \in \cO_S^{\times}$ we have $\alpha \coloneqq ar \in \cO_K$ with $V(\alpha) \cap \bigl( S \cup V(\mu) \bigr) = \varnothing$. In addition we can assume that $a \notin \cO_S^{\times}$ as otherwise $(a , b) \reduce{3} (1 , 0)$. Applying Theorem \ref{T:A301} with $P = V \bigl( \phi(a \cO_S) \bigr) \cup S \cup V(\mu)$, we find prime ideals $\mathfrak{p} , \mathfrak{q} \notin P$ having generators $\pi$ and $q$ that satisfy properties (1)-(4) listed in the theorem. Let $\ell = \ell_{\!\mathfrak{p}}(u)$ be the $\mathfrak{p}$-level of $u$ 
(cf.\ Definition \ref{D:level}). 

\vskip2mm 

We have $a \cO_S = \alpha \cO_S$. Since $V(\alpha) \cap V(\mu) = \varnothing$, 
we conclude that $\mu \notin a\cO_S$, and therefore $\mu \mid \phi(a\cO_S)$ by Lemma \ref{L:A103}. At the same time, $\mu \mid \phi(\mathfrak{p})$. So, $\mu$ divides  $\gcd \bigl( \phi(a\cO_S) , \phi(\mathfrak{p}) \bigr)$, which in turn divides $\phi_0 =$ product of $p$-primary parts  $\phi(\mathfrak{p})_p$ for all primes dividing $\phi(a\cO_S)$. Besides, we clearly have $\gcd \bigl( \phi(\mathfrak{p})/\phi_0 , \phi(a\cO_S) \bigr) = 1$. Furthermore, it is well known that 
$$
\phi(\mathfrak{p}^{\ell}) = \phi(\mathfrak{p}) \cdot | \cO_K/\mathfrak{p} |^{\ell-1},
$$
so since $\mathfrak{p} \notin V \bigl( \phi(a\cO_S) \bigr)$, 
$$
\gcd \bigl( \phi(\mathfrak{p}^{\ell})/\phi_0 , \phi(a\cO_S) \bigr) = \gcd \bigl( \phi(\mathfrak{p})/\phi_0 , \phi(a\cO_S) \bigr) = 1. 
$$
Thus, we can pick an integer $m \geq 2$ satisfying 
$$
m \equiv 0 \pmod{\phi(\mathfrak{p}^{\ell})/\phi_0} \ \ \text{and} \ \ m \equiv 1 \pmod{\phi(a\cO_S)}. 
$$
Now:

\medbreak 

$\bullet$ \emph{$(a , b) \reduce{2} (\pi^t , q^m)$ for some $t \geq \ell$.} Indeed, we have 
$$
q^m \equiv q \equiv b \pmod{a\cO_S}, 
$$
so $(a , b) \reduce{1} (a , q^m)$. Furthermore, the fact that  $\pi \pmod{\mathfrak{q}^2}$ generates $(\cO_K/\mathfrak{q}^2)^{\times}$ implies that $\pi \pmod{\mathfrak{q}^m}$ generates $(\cO_K/\mathfrak{q}^m)^{\times}$ -- cf. Lemma \ref{MRS3.1}.  Thus, we can find an integer 
$t \geq \ell$ such that $(a , q^m) \reduce{1} (\pi^t , q^m)$, and the required fact follows. (We note that this argument relied only on properties (\ref{T:A301-q=b}) and~(\ref{T:A301-pigen}) from Theorem \ref{T:A301}.)

\medbreak 

$\bullet$ \emph{$(\pi^t , q^m) \reduce{1} (\pi^t , u^n)$ for some $n$.} For this we apply Lemma \ref{L:A101} with $\lambda = \phi_0$, $\delta = \mu$ and $b = q$. By construction we have $m \equiv 0 \pmod{\phi(\mathfrak{p}^{\ell})/\lambda}$. Now, every prime divisor $p$ of $\lambda/\delta$ divides $\phi(a\cO_S)$, and therefore the $p$-primary component $(\phi(\mathfrak{p})/\mu)_p$ divides the $p$-primary component $d_p$ of the order $d$ of $u \pmod{\mathfrak{p}}$ in $(\cO_K/\mathfrak{p})^{\times}$. Since $(\lambda/\delta)_p$ divides $(\phi(\mathfrak{p})/\mu)_p$, we obtain that $\lambda/\delta$ divides $d$. This verifies all the assumptions of Lemma \ref{L:A101}, and its application yields the required fact. (This argument used only properties (\ref{T:A301-q=power}) and~(\ref{T:A301-mu}) from Theorem \ref{T:A301}.) 

\medbreak 

Since $u \in \cO_S^{\times}$, it is easy to see that $(\pi^t , u^n) \reduce{2} (1 , 0)$, completing the proof of Theorem \ref{T:1}.\qed

\subsection{Proof of Theorem \ref{T:A301}}

It follows from Hensel's Lemma (or the Inverse Function Theorem) that for every $v \in V^K$, 
the map $K_v^{\times} \to K_v^{\times}$, $x \mapsto x^{\mu}$, is open. So, there exists an integer $N > 0$ such that 
\begin{equation}\label{E:power}
    1 + \mathcal{P}_v^N \subset ({K_v^{\times}})^{\mu} \ \ \text{for all} \ \ v \in V(\mu), 
\end{equation}
and we define the following ideal of $\cO_K$: 
$$
\mathfrak{n} = \prod_{v \in V(\mu)} \mathfrak{p}_v^N, 
$$
where $\mathfrak{p}_v \subset \cO_K$ is the prime ideal corresponding to $v \in V^K_f$. Next, let 
$$
\mu = p_1^{e_1} \cdots p_d^{e_d}
$$
be the prime factorization of $\mu$, and for each $i = 1, \ldots , d$ fix one $v_i \in V(p_i)$. It follows from property (\ref{SS:PRS-proots}) in subsection \ref{SS:PRS} that there exist $u_i , u'_i \in \cO_{v_i}^{\times}$ such that  $\powres{u_i , u'_i}{v_i}{p_i^{e_i}}$ is a primitive $p_i^{e_i}$-th root of unity. We claim that $\zeta_i \coloneqq \powres{u_i , u'_i}{v_i}{\mu}$ is also a primitive $p_i^{e_i}$-th root of unity. Indeed, we see from properties (\ref{SS:PRS-takepwer}) and (\ref{powres-cong}) in \ref{SS:PRS} that
$$
\zeta_i^{p_i^{e_i}} = \powres{u_i , u'_i}{v_i}{\mu/p_i^{e_i}} = 1 ,
$$
i.e., $\zeta_i$ is a $p_i^{e_i}$-th root of unity. On the other hand, 
$$
\zeta_i^{\mu/p_i^{e_i}} = \powres{u_i , u'_i}{v_i}{p_i^{e_i}}
$$
is a primitive $p_i^{e_i}$-th root of unity, so $\zeta_i$ itself is a primitive $p_i^{e_i}$-th root of unity. It follows that 
$$
\zeta \coloneqq \prod_{i = 1}^d \zeta_i
$$
is a primitive $\mu$-th root of unity. 

Next, we can find elements $z_0 , z'_0 \in \cO_K$ such that for each $i = 1, \ldots , d$ we have 
$$
z_0 \equiv u_i \pmod{\mathcal{P}_{v_i}^N} \ \ \text{and} \ \ z_0 \equiv 1 \pmod{\mathcal{P}_v^N} \ \ \text{for all} \ \ v \in V(p_i) \setminus \{ v_i \}, 
$$
and similarly,
$$
z'_0 \equiv u'_i \pmod{\mathcal{P}_{v_i}^N} \ \ \text{and} \ \ z'_0 \equiv 1 \pmod{\mathcal{P}_v^N} \ \ \text{for all} \ \ v \in V(p_i) \setminus \{ v_i \}. 
$$
Then taking (\ref{E:power}) into account, we obtain 
$$
\prod_{v \in V(\mu)} \powres{z_0 , z'_0}{v}{\mu} = \prod_{i = 1}^d   \powres{z_0 , z'_0}{v_i}{\mu} = \prod_{i = 1}^d   \powres{u_i , u'_i}{v_i}{\mu} = \zeta. 
$$
Now, given an integer $j > 0$ and $z , z' \in \cO_K$ such that $z \equiv z_0^j\pmod{\mathfrak{n}}$ and $z' \equiv z'_0 \pmod{\mathfrak{n}}$ then it follows from (\ref{E:power}) that $z/z_0^j \,, \, z'/z'_0 \in (K_v^{\times})^{\mu}$ for all $v \in V(\mu)$. So, we obtain the following: 
\begin{equation}\label{E:A501}
\begin{matrix}
\text{for any $z , z' \in \cO_K$ such that $z \equiv z_0^j \pmod{\mathfrak{n}}$ and $z' \equiv z'_0 \pmod{\mathfrak{n}}$ we have}
\\ \displaystyle
\prod_{v \in V(\mu)} \powres{z , z'}{v}{\mu} = \prod_{v \in V(\mu)} \powres{z_0^j , z'_0}{v}{\mu} = \zeta^j. 
\end{matrix}
\end{equation}    

\refstepcounter{stepholder} 
\begin{step} \label{T:A301Pf-q}
Construction of $\mathfrak{q}$ and $q$.%
\end{step}
By Lemma \ref{L:A102} we can write $b = \beta s^{-1}$ with $\beta \in \cO_K$ and $s \in \cO_S^{\times} \cap \cO_K$. As $V(\alpha) \cap S = \varnothing,$ there exists $t \in \cO_K$ such that $st \equiv 1 \pmod{\alpha \cO_K}$. Since $V(\alpha) \cap V(\mu) = \varnothing$, and hence $\alpha$ and $\mathfrak{n}$ are relatively prime, it follows from Theorem \ref{MRS3.3}
that there exist infinitely many $\Q$-split prime ideals $\mathfrak{q}$ of $\cO_K$ having a generator $q$ satisfying
\begin{enumerate}
    \item $q > 0$ in every real completion of $K$; 
    \item $q \equiv \beta t \pmod{\alpha\cO_K}$;
    \item $q \equiv z'_0 \pmod{\mathfrak{n}}$. 
\end{enumerate}
(We note that since $a$ and $b$ are relatively prime in $\cO_S$, we have $V(\alpha) \cap V(\beta) \subset S$, and hence $V(\alpha) \cap V(\beta) = \varnothing$. But the ideal of $\cO_K$ generated by $\alpha$ and $\beta t$ contains $\alpha$ and $\beta$, implying that $\alpha$ and $\beta t$ are relatively prime in $\cO_K$. By the Chinese Remainder Theorem, we can choose $c \in \cO_K$ so that $c \equiv \beta t \pmod{\alpha \cO_K}$ and $c \equiv z'_0 \pmod{\mathfrak{n}}$, observing that $c$ is prime to $\mathfrak{b} \coloneqq \alpha \mathfrak{n}$. Then we apply Theorem \ref{MRS3.3} to $c$ and $\mathfrak{b}$.)  In particular, we can (and do) fix such a $\mathfrak{q}$ so that $v_{\mathfrak{q}} \notin P \cup S \cup V(\mu) \cup V(\phi(\alpha \cO_K))$. We observe that 
$$
qs \equiv \beta \equiv bs \pmod{\alpha \cO_K}, 
$$
so $q \equiv b \pmod{a\cO_S}$. 
This verifies property~(\ref{T:A301-q=b}) of the statement of the theorem.

\begin{step} \label{T:A301Pf-p}
Construction of\/ $\mathfrak{p}$ and $\pi$.%
\end{step}
Let $\mathfrak{b} = \mathfrak{q}^2 \mathfrak{n}$, where $\mathfrak{q}$ is the prime ideal of $\cO_K$ fixed in Step \ref{T:A301Pf-q}, and let $L_1 = \ray{\mathfrak{b}}$ be the corresponding ray class field (cf.\ \ref{SS:Ray}). Furthermore, let 
$$
\mu' = \mu \cdot \prod_{p \in \Phi} p 
$$
where $\Phi$ is the set of prime divisors of $\phi(a \cO_S)$, and let $L_2$ be the splitting field (over $K$) of the polynomial $x^{\mu'} - u$. 

We will now define an automorphism $\sigma_1 \in \Gal(L_1/K)$. By Lemma \ref{MRS3.1}, the group $(\cO_K/\mathfrak{q}^2)^{\times}$ is cyclic, so we can choose $u_{\mathfrak{q}} \in \cO_K$ for which $u_{\mathfrak{q}} \pmod{\mathfrak{q}^2}$ generates $(\cO_K/\mathfrak{q}^2)^{\times}$. Next, since $\zeta$ is a primitive $\mu$-th root of unity, we can choose an integer $j > 0$ such that 
$$
\zeta^j = \powres{u_{\mathfrak{q}} , q}{\mathfrak{q}}{\mu}^{-1}. 
$$
We then use the Chinese Remainder Theorem to find $c \in \cO_K$ satisfying 
$$
c \equiv u_{\mathfrak{q}} \pmod{\mathfrak{q}^2} \ \ \text{and} \ \ c \equiv z_0^j \pmod{\mathfrak{n}}. 
$$
Since $u_{\mathfrak{q}}$ and $z_0^j$ are units modulo $\mathfrak{q}^2$ and $\mathfrak{n}$ respectively, we conclude that $c$ is relatively prime to $\mathfrak{b}$, and we can consider 
$$
\sigma_1 \coloneqq \theta_{\mathfrak{b}}(c) \in \Gal(L_1/K)
$$
in the notations of \ref{SS:Ray}. 

Set $\sigma_0 = \sigma_1 | (L_1 \cap L_2)$. Using Lemma \ref{MRS3.9}, we can find an automorphism $\sigma_2 \in \Gal(L_2/K)$ such that $\sigma_2 | (L_1 \cap L_2) = \sigma_0$ and for every prime $p \in \Phi$, the field $L_2$ contains a root of $u$ of degree $\mu_p^+$ that is not fixed by $\sigma_2$. By construction, 
$$
\sigma_1 | (L_1 \cap L_2) = \sigma_2 | (L_1 \cap L_2),
$$
so there exists $\sigma \in \Gal(L_1L_2/K)$ such that $\sigma | L_i = \sigma_i$ for $i = 1, 2$. 

Let $F$ be the Galois closure of $L_1L_2$ over $\Q$, and let $\widetilde{\sigma} \in \Gal(F/\Q)$ be such that $\widetilde{\sigma} | (L_1L_2) = \sigma$. Using Chebotarev's Density Theorem, we can find a rational prime $p > 2$ which is unramified in $F$ and such that no extension of the $p$-adic valuation $v_p$ to $K$ lies in $P \cup V(\mathfrak{b}) \cup \Phi$ and for a suitable extension $w$ of $v_p$ to $F$ the corresponding Frobenius automorphism $\varphi$ coincides with $\widetilde{\sigma}$. Let $v$ be the restriction of $w$ to $K$, and let $\mathfrak{p}$  be the prime ideal of $\cO_K$ corresponding to $v$.  Since 
$$
\varphi | L_1 = \sigma_1 = \theta_{\mathfrak{b}}(c), 
$$
we conclude from Proposition \ref{MRS3.5} that $K_v = \Q_p$, implying the ideal $\mathfrak{p}$ is $\Q$-split, and furthermore that $\mathfrak{p}$ is principal with a generator $\pi$ satisfying 
$$
\pi \equiv c \pmod{\mathfrak{b}} \ \ \text{and} \ \ \pi > 0 \ \ \text{in every real completion of} \ \ K. 
$$
In particular, $\pi \equiv u_{\mathfrak{q}}\pmod{\mathfrak{q}^2}$, and hence $\pi \pmod{\mathfrak{q}^2}$ generates $(\cO_K/\mathfrak{q}^2)^{\times}$. 
This verifies property~(\ref{T:A301-pigen}) of the statement of the theorem.

\vskip2mm 

Properties (\ref{T:A301-q=b}) and (\ref{T:A301-pigen}) in Theorem \ref{T:A301} have already been established in the course of implementing the constructions in Steps \ref{T:A301Pf-q} and~\ref{T:A301Pf-p}. So, it remains to verify properties (\ref{T:A301-q=power}) and~(\ref{T:A301-mu}).  

\begin{step}
Verification of property (\ref{T:A301-q=power}) in Theorem \ref{T:A301}.%
\end{step}
By using property (\ref{powres-power}) in \ref{SS:PRS} for the archimedean places (and the fact that $\pi , q > 0$ in all real completions of $K$) and using property (\ref{powres-cong}) in \ref{SS:PRS} for the nonarchimedean places, we see that 
$$
\powres{\pi , q}{v}{\mu} = 1 \ \ \text{for all} \ \ v \in V^K \setminus (\{v_{\mathfrak{p}} , v_{\mathfrak{q}} \} \cup V(\mu)).
$$
So, it follows from the Reciprocity Law (property (\ref{powres-reciprocity}) in \ref{SS:PRS}) that 
\begin{equation}\label{E:A601}
\powres{\pi , q}{\mathfrak{p}}{\mu} \cdot \powres{\pi , q}{\mathfrak{q}}{\mu} \cdot \prod_{v \in V(\mu)} \powres{\pi , q}{v}{\mu} = 1. 
\end{equation}
By construction, we have $\pi \equiv z_0^j \pmod{\mathfrak{n}}$ and $q \equiv z'_0 \pmod{\mathfrak{n}}$, so applying (\ref{E:A501}) to $z = \pi$ and $z' = q$ we obtain that 
$$
\prod_{v \in V(\mu)} \powres{\pi , q}{v}{\mu} = \zeta^j = \powres{u_{\mathfrak{q}} , q}{\mathfrak{q}}{\mu}^{-1}. 
$$
Also, since $\pi \equiv u_{\mathfrak{q}}\pmod{\mathfrak{q}}$, we see from property (\ref{powres-cong}) in \ref{SS:PRS} that 
    $$ \powres{\pi , q}{\mathfrak{q}}{\mu}
    = \powres{u_{\mathfrak{q}} , q}{\mathfrak{q}}{\mu} . $$
This means that (\ref{E:A601}) reduces to 
$$
\powres{\pi , q}{\mathfrak{p}}{\mu} = 1. 
$$
By property (\ref{powres-mpower}) in \ref{SS:PRS}, this means that $q$ is a $\mu$-th power modulo $\mathfrak{p}$, as required. 

\begin{step}
Verification of property (\ref{T:A301-mu}) in Theorem \ref{T:A301}.%
\end{step}
Let $v = v_{\mathfrak{p}}$.
Since $v \notin V(\mu)$, hence $\mu \notin \mathfrak{p}$, the fact that $\mu$ divides $\phi(\mathfrak{p})$ follows from Lemma \ref{L:A103}. Let $t$ be the order of $\bar{u} = u \pmod{\mathfrak{p}}$ in $(\cO_K/\mathfrak{p})^{\times}$. We need to show that for every $\ell \in \Phi$ we have 
$$
\bigl( \phi(\mathfrak{p})/\mu \bigr)_{\ell} \le t_{\ell}. 
$$
Set $\ell^d = \mu_{\ell}$. If $\bigl( \phi(\mathfrak{p})/\mu \bigr)_{\ell} > t_{\ell}$ then $\phi(\mathfrak{p})/\ell^{d+1}$ is an integer divisible by $t_{\ell}$. Since the group $(\cO_K/\mathfrak{p})^{\times}$ is cyclic, this means that the $\ell$-primary component $\langle \bar{u} \rangle_{\ell}$ of the cyclic group $\langle \bar{u} \rangle$ is contained in the subgroup of $\ell^{d+1}$-th powers in $(\cO_K/\mathfrak{p})^{\times}$. Then the whole cyclic group $\langle \bar{u} \rangle$ is contained in this subgroup, and in particular, $\bar{u}$ itself is a $\ell^{d+1}$-th power. Moreover, since $\ell^{d+1} \mid \phi(\mathfrak{p})$, we have $\ell^{d+1}$ roots of unity of degree $\ell^{d+1}$ in $\cO_K/\mathfrak{p}$. All this means that the polynomial $\bar{f}(x) = x^{\ell^{d+1}} - \bar{u}$, which is the reduction of the polynomial $f(x) = x^{\ell^{d+1}} - u$, is a product of linear factors over $\cO_K/\mathfrak{p}$. Since $\ell \notin \mathfrak{p}$, the polynomial $\bar{f}$ is separable, so it follows from Hensel's Lemma that $f$ is a product of linear factors over $K_v$. Since $\sigma_2 \in \Gal \bigl( (L_2)_v/K_v \bigr)$ (it is actually the Frobenius automorphism of the extension $(L_2)_v/K_v$), we obtain that $\sigma_2$ acts trivially on all roots of $f$, which contradicts the construction of~$\sigma_2$.

\vskip1mm

This concludes the proof of Theorem \ref{T:A301}. \qed

\bibliographystyle{amsplain}

\end{document}